\theoremstyle{plain}
\newtheorem{proposition}{Proposition}[section]
\newtheorem{theorem}[proposition]{Theorem}
\newtheorem{lemma}[proposition]{Lemma}
\theoremstyle{definition}
\newtheorem{example}[proposition]{Example}
\newtheorem{definition}[proposition]{Definition}
\newtheorem{observation}[proposition]{Observation}
\theoremstyle{remark}
\newtheorem{remark}[proposition]{Remark}
\DeclareMathOperator{\Aut}{Aut}
\DeclareMathOperator{\GL}{GL}
\DeclareMathOperator{\PSL}{PSL}
\DeclareMathOperator{\PGL}{PGL}
\DeclareMathOperator{\End}{End}
\DeclareMathOperator{\Spanset}{Span} 
\DeclareMathOperator{\Gr}{Gr} 
\DeclareMathOperator{\id}{id} 
\DeclareMathOperator{\Haus}{Haus} 
\DeclareMathOperator{\CAT}{CAT} 
\DeclareMathOperator{\Isom}{Isom}
\DeclareMathOperator{\Min}{Min}
\DeclareMathOperator{\Stab}{Stab}
\DeclareMathOperator{\CH}{ConvHull}
\DeclareMathOperator{\relint}{rel-int}
\DeclareMathOperator{\partiali}{\partial_{\, i}}
\DeclareMathOperator{\partialni}{\partial_{\, n}}
\newcommand{\hil}{H_{\Omega}}
\DeclareMathOperator{\Cc}{\mathcal{C}}
\DeclareMathOperator{\Dc}{\mathcal{D}}
\DeclareMathOperator{\Kc}{\mathcal{K}}
\DeclareMathOperator{\Lc}{\mathcal{L}}
\DeclareMathOperator{\Nc}{\mathcal{N}}
\DeclareMathOperator{\Cb}{\mathbb{C}}
\DeclareMathOperator{\Pb}{\mathbb{P}}
\DeclareMathOperator{\Rb}{\mathbb{R}}
\DeclareMathOperator{\Zb}{\mathbb{Z}}
\newcommand{\abs}[1]{\left|#1\right|}
\newcommand{\norm}[1]{\left\|#1\right\|}
\newcommand{\ip}[1]{\left\langle #1\right\rangle}
\begin{document}

\title[A flat torus theorem for convex co-compact actions]{A flat torus theorem for convex co-compact actions of projective linear groups}
\author{Mitul Islam}\address{Department of Mathematics, University of Michigan, Ann Arbor, MI, USA}
\email{mitulmi@umich.edu}
\author{Andrew Zimmer}\address{Department of Mathematics, Louisiana State University, Baton Rouge, LA, USA}
\curraddr{Department of Mathematics, University of Wisconsin-Madison, Madison, WI, USA}
\email{amzimmer2@wisc.edu}
\date{\today}
\keywords{discrete subgroups of Lie groups, real projective geometry, Hilbert metric, geometric structures on manifolds, flat torus theorem}
\subjclass[2010]{53A20, 57N16, 20F67, 20H10, 22E40}

\begin{abstract} 
In this paper we consider discrete groups in $\PGL_d(\Rb)$ acting convex co-compactly on a properly convex domain in real projective space. For such groups, we establish an analogue of the well-known flat torus theorem for $\CAT(0)$ spaces. 
\end{abstract}

\maketitle

\section{Introduction} 

If $G$ is a connected simple Lie group with trivial center and $K \leq G$ is a maximal compact subgroup, then $X=G/K$ has a unique (up to scaling) Riemannian symmetric metric $g$ such that $G = \Isom_0(X,g)$. The metric $g$ is non-positively curved and $X$ is simply connected, hence every two points in $X$ are joined by a unique geodesic segment. A subset $\Cc \subset X$ is called \emph{convex} if for every $x,y \in \Cc$ the geodesic joining them is also in $\Cc$.  Finally, a discrete group $\Gamma \leq G$ is said to be \emph{convex co-compact} if there exists a non-empty closed convex set $\Cc \subset X$ such that $\gamma(\Cc) = \Cc$ for all $\gamma \in \Gamma$ and the quotient $\Gamma \backslash \Cc$ is compact.

In the case in which $G$ has real rank one, for instance $G = \PSL_2(\Rb)$, there are an abundance of examples of convex co-compact subgroups, but when $G$ has higher rank, for instance $G=\PSL_d(\Rb)$ and $d \geq 3$, the situation is very rigid.

\begin{theorem}[Kleiner-Leeb~\cite{KL2006}, Quint~\cite{Q2005}] Suppose $G$ is  a simple Lie group with real rank at least two and $\Gamma \leq G$ is a Zariski dense discrete subgroup. If $\Gamma$ is convex co-compact, then $\Gamma$ is a co-compact lattice in $G$. 
\end{theorem}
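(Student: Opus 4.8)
The plan is to show that the $\Gamma$-invariant convex set $\Cc$ must be all of $X$; once $\Cc = X$, the hypothesis that $\Gamma \backslash \Cc$ is compact says exactly that $\Gamma$ is a co-compact lattice. Write $r = \dimension_{\mathbb{R}} \geq 2$ for the real rank of $X$, and recall that $X$ is a $\CAT(0)$ space on which $\Gamma$ acts properly (since it is discrete) and co-compactly on $\Cc$ (by hypothesis). Thus the action on $\Cc$ is geometric and the orbit map $\Gamma \to \Cc$ is a quasi-isometry. In particular the visual boundary $\partial_\infty \Cc$ coincides with the limit set $\Lambda(\Gamma) \subseteq \partial_\infty X$, and I would regard $\Lambda(\Gamma)$, endowed with the Tits metric, as a closed Tits-convex subset of the spherical building $\partial_T X$.

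The first input I would record is that $\partial_\infty \Cc = \Lambda(\Gamma)$ and that, because axes of elements contribute both of their endpoints, $\Lambda(\Gamma)$ is invariant under the opposition involution. The next, and most important, input is Zariski density: by Benoist's limit cone theorem the cone of Jordan projections of $\Gamma$ has non-empty interior in the closed Weyl chamber, so $\Gamma$ contains regular (loxodromic) elements and $\Lambda(\Gamma)$ meets the interior of a chamber of $\partial_T X$ in a set with non-empty interior. Combined with the fact that the flag limit set of a Zariski dense group is itself Zariski dense, hence ``thick'' and minimal in the relevant flag variety, I would upgrade this to the statement that $\Lambda(\Gamma)$ contains an entire chamber, hence---using opposition-invariance---a pair of opposite chambers and the unique apartment they span, and finally that $\Lambda(\Gamma) = \partial_T X$ is the \emph{full} Tits boundary.

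The geometric heart of the argument, and the step I expect to be the main obstacle, is the rigidity statement that a closed convex subset $\Cc \subseteq X$ of an irreducible symmetric space of rank $\geq 2$ whose Tits boundary is all of $\partial_T X$ must equal $X$. This is where higher rank is essential: in rank one a proper convex set (for instance a horoball) can already have full visual boundary, but in higher rank the building structure forbids this. Concretely, a pair $(\xi^-,\xi) \in \Lambda(\Gamma) \times \Lambda(\Gamma)$ of opposite regular points is joined by a bi-infinite regular geodesic $\ell$, which lies in $\Cc$ by convexity and closedness; the parallel set of a regular geodesic is precisely the maximal flat $F$ containing it, so understanding how maximal flats sit inside $\Cc$ is the key. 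Letting the regular direction range over the open set supplied by the limit cone, and translating the resulting flats by the co-compact $\Gamma$-action, I would argue that every point of $X$ has a neighbourhood contained in $\Cc$; since $\Cc$ is also closed and $X$ is connected, this forces $\Cc = X$.

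The difficulty is thus twofold: (i) passing from ``the limit cone has non-empty interior'' to ``$\Lambda(\Gamma)$ is the entire Tits building,'' which requires the building and Furstenberg-boundary machinery rather than soft $\CAT(0)$ arguments; and (ii) the convex-geometric rigidity that a full Tits boundary forces $\Cc = X$ in rank $\geq 2$, which is exactly the content one extracts from Kleiner--Leeb's classification of invariant convex subsets. I would isolate (ii) as the principal lemma, proved via the parallel-set structure of singular and regular flats, and treat (i) as the mechanism that verifies its hypothesis.
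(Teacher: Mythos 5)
The first thing to say is that the paper contains no proof of this statement: it is quoted in the introduction as a known theorem, with the proof deferred entirely to Kleiner--Leeb \cite{KL2006} and Quint \cite{Q2005}. So your sketch can only be compared with those references. Its overall architecture --- show that the $\Gamma$-invariant convex set $\Cc$ is all of $X$ by analyzing its boundary at infinity inside the Tits building, then conclude from co-compactness of $\Gamma \backslash \Cc = \Gamma \backslash X$ --- is indeed the architecture of the cited proofs, and your use of Benoist's limit cone theorem to produce regular limit points is the right opening move.

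However, as a proof the proposal has genuine gaps, located exactly at the two steps you flag. First, in step (i) the inference ``the flag limit set of a Zariski dense group is Zariski dense, hence thick and minimal, hence $\Lambda(\Gamma)$ contains an entire chamber'' is not valid. Zariski density of a limit set does not make it thick: Zariski-dense discrete subgroups in higher rank (images of Hitchin representations, or more generally Anosov subgroups) typically have nowhere-dense, fractal limit sets, and minimality of the $\Gamma$-action gives only that orbit closures equal the limit set, not the full flag variety. The mechanism that actually forces the boundary to be the whole building in \cite{KL2006} is that $\partial_T \Cc$ is \emph{convex} in the Tits metric (because it bounds a convex set), and this convexity is combined with Zariski density in an essential way; your step (i) never invokes it, so the step fails as written. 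Second, your principal lemma (ii) is proposed to be extracted from ``Kleiner--Leeb's classification of invariant convex subsets'' --- but that classification \emph{is} the theorem under discussion, so as structured the argument is circular. Read as the group-free statement ``a closed convex set with $\partial_T \Cc = \partial_T X$ equals $X$,'' lemma (ii) is genuine and provable, but your sketch does not prove it: convexity gives that $\Cc$ contains \emph{a} geodesic joining each antipodal pair of boundary points, not the maximal flat containing that geodesic, and the passage from a family of regular geodesics in $\Cc$ to open neighbourhoods of every point of $X$ is asserted rather than argued. Finally, a small but real error: a horoball in rank one does not have full visual boundary (its visual boundary is a single point), and in fact in rank one, too, a closed convex set with $\partial_\infty \Cc = \partial_\infty X$ must equal $X$. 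The place where higher rank is essential is step (i) --- in rank one, Zariski density does not force a full limit set, as classical convex co-compact Kleinian groups show --- not the convexity rigidity of step (ii).
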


Although the ``symmetric space'' definition of convex co-compact subgroups leads to no interesting new examples in higher rank, Danciger-Gu{\'e}ritaud-Kassel~\cite{DGF2017} have recently introduced a different notion of convex co-compact subgroups in $G:=\PGL_d(\Rb)$ based on the action of the subgroup on the projective space $\Pb(\Rb^d)$. 

Their definition of convex co-compact subgroups requires some preliminary definitions. When $\Omega \subset \Pb(\Rb^d)$ is a properly convex domain, the \emph{automorphism group of $\Omega$} is defined to be
\begin{align*}
\Aut(\Omega) : = \left\{ g \in \PGL_d(\Rb) : g \Omega = \Omega\right\}.
\end{align*}
For a subgroup $\Lambda \leq \Aut(\Omega)$, the  \emph{full orbital limit set of $\Lambda$ in $\Omega$}, denoted by $\Lc_{\Omega}(\Lambda)$, is the set of all $x \in \partial \Omega$ where there exist  $p \in \Omega$ and a sequence $\gamma_n \in \Lambda$ such that $\gamma_n(p) \rightarrow x$. Next, let $\Cc_\Omega(\Lambda)$ denote the convex hull of $\Lc_\Omega(\Lambda)$ in $\Omega$. 

\begin{definition}\cite[Definition 1.10]{DGF2017} Suppose $\Omega \subset \Pb(\Rb^d)$ is a properly convex domain. An infinite discrete subgroup $\Lambda \leq \Aut(\Omega)$ is called \emph{convex co-compact} if $\Cc_\Omega(\Lambda)$ is non-empty and $\Lambda$ acts co-compactly on $\Cc_\Omega(\Lambda)$. 
\end{definition}

When $\Lambda$ is word hyperbolic there is a close connection between this class of discrete groups in $\PGL_d(\Rb)$ and Anosov representations, see~\cite{DGF2017} for details and~\cite{DGF2018,Z2017} for related results. The case when $\Lambda$ is not word-hyperbolic is less understood.

In this paper we study Abelian subgroups of convex co-compact groups. We also consider a more general class of convex co-compact groups defined as follows. 

\begin{definition}\label{defn:cc_naive}  Suppose $\Omega \subset \Pb(\Rb^d)$ is a properly convex domain. An infinite discrete subgroup $\Lambda \leq \Aut(\Omega)$ is called \emph{naive convex co-compact} if there exists a non-empty closed convex subset $\Cc \subset \Omega$ such that 
\begin{enumerate}
\item $\Cc$ is $\Lambda$-invariant, that is $g\Cc = \Cc$ for all $g \in \Lambda$, and
\item the quotient $\Lambda \backslash \Cc$ is compact.
\end{enumerate}
In this case, we say that $(\Omega, \Cc, \Lambda)$ is a \emph{naive convex co-compact triple}. 
\end{definition}

Clearly, if $\Lambda \leq \Aut(\Omega)$ is convex co-compact, then it is also naive convex co-compact. Further, it is straightforward to construct examples where $\Lambda \leq \Aut(\Omega)$ is naive convex co-compact, but not convex co-compact (see Example~\ref{ex:non_convex_cocompact}). In these cases, the convex subset $\Cc$ in Definition~\ref{defn:cc_naive} is a strict subset of $\Cc_\Omega(\Lambda)$.

A properly convex domain $\Omega \subset \Pb(\Rb^d)$ has a natural proper geodesic metric $H_\Omega$ called the Hilbert distance (defined in Section~\ref{sec:Hilbert_metric}). Geodesic balls in the metric space $(\Omega, H_\Omega)$ are themselves convex subsets of $\Pb(\Rb^d)$, but this metric space is $\CAT(0)$ if and only if it is isometric to real hyperbolic $(d-1)$-space (in which case $\Omega$ coincides with the unit ball in some affine chart)~\cite{KS1958}.

Despite the lack of global non-positive curvature, in this paper we establish an analogue of the well known flat torus theorem for $\CAT(0)$ groups established by Gromoll-Wolf~\cite{GW1971} and Lawson-Yau~\cite{LY1972}. In the setting of properly convex domains and the Hilbert metric, the natural analogue of totally geodesic flats are properly embedded simplices which are defined as follows. 

\begin{definition} A subset $S \subset \Pb(\Rb^d)$ is a \emph{simplex} if there exists $g \in \PGL_d(\Rb)$ and $1 \leq k \leq d$ such that 
\begin{align*}
gS = \left\{ [x_1:\dots:x_{k}:0:\dots:0] \in \Pb(\Rb^d): x_1>0,\dots, x_k>0  \right\}.
\end{align*}  
In this case we define $\dim(S)=k-1$ (notice that $S$ is diffeomorphic to $\Rb^{k-1}$) and say that the $k$ points 
\begin{align*}
g^{-1}[1:0:\dots:0], g^{-1}[0:1:0:\dots:0], \dots, g^{-1}[0:\dots:0:1:0:\dots:0] \in \partial S
\end{align*}
are the vertices of $S$. 
\end{definition}

\begin{definition} Suppose $A \subset B \subset \Pb(\Rb^d)$. Then $A$ is \emph{properly embedded in $B$} if the inclusion map $A \hookrightarrow B$ is a proper map (relative to the subspace topology). 
\end{definition} 

The main result of the paper is the following. 

\begin{theorem}\label{thm:max_abelian}(see Section~\ref{sec:pf_of_thm_max_abelian}) Suppose that $(\Omega, \Cc, \Lambda)$ is a naive convex co-compact triple. If $A \leq \Lambda$ is a maximal Abelian subgroup of $\Lambda$, then there exists a properly embedded simplex $S \subset \Cc$ such that 
\begin{enumerate}
\item $S$ is $A$-invariant, 
\item $A$ acts co-compactly on $S$, and
\item $A$ fixes each vertex of $S$. 
\end{enumerate}
Moreover, $A$ has a finite index subgroup isomorphic to $\Zb^{\dim(S)}$. 
\end{theorem}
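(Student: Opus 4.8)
The plan is to follow the broad outline of the $\CAT(0)$ flat torus theorem, replacing the role of a flat $\Rb^n$ by a properly embedded simplex and the role of a translation lattice by a discrete group of ``diagonal'' translations of a simplex. Recall first that the Hilbert metric on a simplex is isometric to a normed vector space, and that for a simplex $S$ with $\dim S = k$ the subgroup of $\PGL_d(\Rb)$ stabilizing $S$ and fixing every vertex is the positive diagonal torus $T_S \cong \Rb_{>0}^{k+1}/\Rb_{>0} \cong \Rb^{k}$, acting on $(S, H_\Omega)$ by translations of the associated normed space. Thus conclusions (1)--(3) together with the final assertion amount to producing a common real eigenbasis for $A$ whose ``boundary'' eigenlines span $S$, and then showing that the image of $A$ in $T_S$ is a cocompact lattice.

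\textbf{Step 1: simultaneous diagonalization.} First I would pass to a finite-index subgroup and show that $A$ admits a common eigenbasis $e_1, \dots, e_d$ of $\Rb^d$ on which every $g \in A$ acts as a positive diagonal matrix $\operatorname{diag}(\chi_1(g), \dots, \chi_d(g))$, where each $\chi_i \colon A \to \Rb_{>0}$ is a homomorphism. Commutativity of $A$ gives simultaneous triangularizability over $\Cb$ for free; the real content is to rule out nontrivial Jordan blocks and genuinely complex eigenvalues. This is where I expect to use the cocompact convex action decisively: by the standard displacement argument, cocompactness of $\Lambda \backslash \Cc$ forces every infinite-order $g \in \Lambda$ to be \emph{axial}, that is, to realize its minimal Hilbert displacement along a projective line segment in $\Cc$ whose endpoints are real eigenlines of $g$ in $\partial\Omega$ carrying the extreme eigenvalues. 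I would leverage the structural results of the earlier sections to upgrade this to semisimplicity of the full $A$-action, so that $\Rb^d$ decomposes into real $A$-eigenspaces.

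\textbf{Step 2: assembling the simplex.} Using the common eigenbasis, I would identify which eigenlines $[e_i]$ lie in $\ol\Omega$ and show that the convex hull of these boundary eigenlines is a simplex $S$ meeting $\Omega$, whose vertices are exactly those $[e_i] \in \partial\Omega$. Since $A$ acts diagonally, each $g \in A$ fixes every $[e_i]$, giving (3), and preserves $S$, giving (1). That $S \subset \Cc$ should follow from the fact that these eigen-fixed-points are full orbital limit points of $A$ (the attracting and repelling directions of elements of $A$), so that their convex hull lies inside the $\Lambda$-invariant closed convex set $\Cc$, whose relevant minimal sets already lie in $\Cc$. Proper embeddedness of $S$ in $\Cc$, hence in $\Omega$, is inherited from proper embeddedness of the ambient projective subspaces.

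\textbf{Step 3: cocompactness and the lattice conclusion.} Identifying $S$ with a normed space $(\Rb^{\dim S}, \|\cdot\|)$, the homomorphism $g \mapsto (\log\chi_1(g), \dots, \log\chi_{\dim S + 1}(g))$ modulo the scaling direction sends $A$ to a group $L$ of translations; discreteness of $A$ in $\PGL_d(\Rb)$ makes $L$ a discrete subgroup of $\Rb^{\dim S}$, hence $L \cong \Zb^r$ for some $r$. It remains to prove that $L$ has full rank $r = \dim S$, which is simultaneously conclusion (2) and the ``moreover'' statement. Here I would argue by contradiction using cocompactness of $\Lambda$ on $\Cc$ together with the \emph{maximality} of $A$: if $A$ failed to act coboundedly on $S$, then choosing points $q_n \in S$ escaping to infinity in a direction uncovered by $L$ and pulling them back into a fundamental domain by elements $\gamma_n \in \Lambda$, one shows that the $\gamma_n$ must asymptotically preserve $S$ and fix its vertices, hence asymptotically lie in $T_S$; discreteness then forces $\gamma_n$, for $n$ large, to commute with $A$, so by maximality $\gamma_n \in A$, contradicting that these directions were uncovered. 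This forces $L$ to be a cocompact lattice in $\Rb^{\dim S}$, yielding (2) and a finite-index subgroup of $A$ isomorphic to $\Zb^{\dim S}$.

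The main obstacle is Step 3, intertwined with the precise formulation of Step 1: upgrading the soft ``common eigenbasis'' picture to the sharp statement that $A$ acts \emph{cocompactly} on a single simplex. The delicate points are ensuring that the individual axes and minimal sets of elements of $A$ genuinely assemble into one simplex rather than a union of lower-dimensional pieces, and making rigorous the limiting argument that extracts commuting elements of $\Lambda$ from cocompactness of the $\Lambda$-action, where the maximality hypothesis on $A$ is essential.
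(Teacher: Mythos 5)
There is a genuine gap, and it sits exactly where the flat torus theorem is hardest to transplant: your Step 1. The ``standard displacement argument'' that cocompactness forces axiality is a $\CAT(0)$ argument --- it rests on convexity of the displacement functions $x \mapsto \dist(x, gx)$, which holds in $\CAT(0)$ spaces but \emph{fails} for the Hilbert metric (recall from the introduction that $(\Omega, H_\Omega)$ is $\CAT(0)$ only when $\Omega$ is an ellipsoid). So it is not soft that elements of $\Lambda$ attain their minimal translation length, and it is certainly not soft (indeed, it is false in general) that $A$ is simultaneously diagonalizable over $\Rb$ on all of $\Rb^d$: the theorem only forces diagonal behavior on $\Spanset S$, and an element can perfectly well carry Jordan blocks transverse to the simplex. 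Showing $\Cc \cap \bigcap_{a \in A}\Min(a) \neq \emptyset$ is precisely the main content of the paper, and it is obtained by two substitutes for convexity of displacement: the estimate ${\rm ConvHull}_\Omega(M_r) \subset M_{2^{d-1}r}$ for the approximate min-sets $M_r$ (via Crampon's inequality for parallel projective geodesics), and a Ruane-style lemma that $C_\Lambda(A)$ acts co-compactly on $M_r$; the simplex itself then comes not from an eigenbasis but from an induction on $\dim\Omega + \dim\Cc$ whose engine is limit endomorphisms $T \in \overline{G}^{\End}$ of orbit sequences --- these commute with the Abelian group and project $\Cc$ onto boundary faces $F_\Omega(x) \cap \partiali\Cc$, yielding fixed points in $\partiali\Cc$; a minimal such collection whose convex hull meets $\Omega$ is then shown via Carath\'eodory's theorem to span a properly embedded simplex. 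No common eigenbasis of $\Rb^d$ is needed or produced.

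Your Step 3 also does not work as stated: from ``$\gamma_n$ asymptotically lie in $T_S$'' plus discreteness of $\Lambda$ you cannot conclude that $\gamma_n$ eventually centralizes $A$ --- elements close to a torus need not commute with $A$, and discreteness gives no such neighborhood statement. The correct version of your idea, and the actual place maximality enters, is: write $A = \ip{a_1,\dots,a_m}$ (finite generation needs an argument; the paper passes to the Zariski closure and cites a result on solvable Lie groups), and if points $x_n \in M_r$ escape the $C_\Lambda(A)$-orbit, pull them back by $\beta_n \in \Lambda$ using cocompactness on $\Cc$; then $H_\Omega(\beta_n^{-1}a_j\beta_n x_0, x_0)$ is uniformly bounded, so \emph{properness} of the action makes the set of conjugates $\{\beta_n^{-1}a_j\beta_n\}$ finite, whence after passing to a subsequence $\beta_n\beta_1^{-1}$ centralizes $A$ \emph{exactly}, contradicting the escape; maximality ($A = C_\Lambda(A)$) then converts this into cocompactness of $A$ itself. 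Finally, two smaller omissions in your lattice count: the restriction homomorphism $\varphi : A \to \Aut(S)$ can have nontrivial kernel (it fixes $S$ pointwise, so properness makes it finite, but this must be said), and to get $\Zb^{\dim S}$ on the nose one needs a torsion-free finite-index subgroup, which the paper extracts via Selberg's lemma before concluding that its image is a uniform lattice in the diagonal group $\cong (\Rb^{\dim S},+)$.
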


\begin{remark} \ \begin{enumerate}
\item If $\dim S = 0$, then $S \subset \Cc$ is a fixed point of $A$ in $\Cc$ and $A$ is a finite group. If $\dim S = 1$, then $S \subset \Cc$ can be parametrized to be a unit speed geodesic in the Hilbert metric on $\Omega$. 
\item The maximality assumption in Theorem~\ref{thm:max_abelian} is necessary.   Already in the case when $\Omega$ is a simplex, there are examples of non-maximal Abelian subgroups which do not act co-compactly on any convex subset of $\Omega$ (see Example~\ref{ex:non_maximal}). 
\end{enumerate}
\end{remark}

A properly convex domain $\Omega \subset \Pb(\Rb^d)$ is called \emph{divisible} when there exists a discrete group $\Lambda \leq \Aut(\Omega)$ which acts co-compactly on all of $\Omega$. Divisible domains have been extensively studied (see the survey papers~\cite{B2008,Q2010,L2014}), but even in this very special case Theorem~\ref{thm:max_abelian} is new. 

We also note that there are a number of examples of naive convex co-compact groups which contain infinite Abelian subgroups which are \textbf{not} virtually isomorphic to $\Zb$, see for instance: \cite[Section 4]{B2006},  \cite[Theorem A]{CLM2020}, and \cite{BDL2018}.

A key step in the proof of Theorem~\ref{thm:max_abelian} is showing that the centralizer of an Abelian subgroup of a naive convex co-compact group is also a naive convex co-compact group. To state the precise result we need some terminology.

\begin{definition} Suppose that $\Omega \subset \Pb(\Rb^d)$ is a properly convex domain and $g \in \Aut(\Omega)$. Define the \emph{minimal translation length of $g$} to be
\begin{align*}
\tau_\Omega(g): = \inf_{x \in \Omega} H_\Omega(x, g x)
\end{align*}
and the \emph{minimal translation set of $g$} to be
\begin{align*}
\Min(g) = \left\{ x \in\Omega: H_\Omega(x,gx) = \tau_\Omega(g) \right\}.
\end{align*}
\end{definition}

Cooper-Long-Tillmann~\cite{CLT2015} showed that the minimal translation length of an element can be determined from its eigenvalues. In particular, given $h \in \GL_d(\Rb)$ let 
\begin{align*}
\lambda_1(h) \geq \lambda_2(h) \geq \dots \geq \lambda_d(h)
\end{align*}
denote the absolute values of the eigenvalues of $h$. Then given $g \in \PGL_d(\Rb)$ and $1 \leq i, j\leq d$ define
\begin{align*}
\frac{\lambda_i}{\lambda_j}(g) = \frac{\lambda_i(\overline{g})}{\lambda_j(\overline{g})}
\end{align*}
where $\overline{g} \in \GL_d(\Rb)$ is any lift of $g$. Then we have the following. 

\begin{proposition}\cite[Proposition 2.1]{CLT2015}\label{prop:min_trans_compute} If $\Omega \subset \Pb(\Rb^d)$ is a properly convex domain and $g \in \Aut(\Omega)$, then 
\begin{align*}
\tau_\Omega(g) = \frac{1}{2} \log \frac{\lambda_1}{\lambda_d}(g).
\end{align*}
\end{proposition}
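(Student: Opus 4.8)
The plan is to pass to the cone picture and prove two matching inequalities, using the support-function (Birkhoff) description of the Hilbert metric. Let $C \subset \Rb^d$ be a properly convex cone with $\Omega = \Pb(C)$, and choose the lift $\ol{g} \in \GL_d(\Rb)$ of $g$ normalized so that $\ol{g} C = C$; then the $\lambda_i(g)$ are the moduli $\lambda_i(\ol{g})$. Writing $C^* = \{\phi : \phi|_{\ol{C}} \geq 0\}$ for the dual cone, the cross-ratio definition of $H_\Omega$ rewrites, for $x = [\tilde x]$, $y = [\tilde y]$ with $\tilde x, \tilde y \in C$, as
\[
H_\Omega(x,y) = \frac{1}{2}\log\left(\sup_{\phi \in C^*\setminus\{0\}}\frac{\phi(\tilde y)}{\phi(\tilde x)}\cdot \sup_{\psi \in C^*\setminus\{0\}}\frac{\psi(\tilde x)}{\psi(\tilde y)}\right),
\]
the point being that $t\tilde x - \tilde y \in \ol{C}$ iff $t \geq \phi(\tilde y)/\phi(\tilde x)$ for all $\phi \in C^*$, and that a nonzero $\phi \in C^*$ is strictly positive on $\inte C$. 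I would first record this reformulation.

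Second, I would extract the relevant eigendata from Perron--Frobenius for proper cones. Since $\ol{g}$ preserves $C$, its spectral radius $\lambda_1$ is a positive eigenvalue with an eigenvector $v_+ \in \ol{C}$; applying this to $\ol{g}^{-1}$ gives that $\lambda_d$ is a positive eigenvalue with eigenvector $v_- \in \ol{C}$. Since $\ol{g}^{\,T}$ preserves the proper cone $C^*$, there are likewise $\phi_+, \phi_- \in C^*\setminus\{0\}$ with $\ol{g}^{\,T}\phi_+ = \lambda_1 \phi_+$ and $\ol{g}^{\,T}\phi_- = \lambda_d \phi_-$.

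The lower bound is then immediate: for any $\tilde x \in C$, taking $\phi = \phi_+$ in the first supremum gives $\phi_+(\ol{g}\tilde x)/\phi_+(\tilde x) = \lambda_1$, and taking $\psi = \phi_-$ in the second gives $\phi_-(\tilde x)/\phi_-(\ol{g}\tilde x) = 1/\lambda_d$; hence $H_\Omega(x, gx) \geq \frac{1}{2}\log(\lambda_1/\lambda_d)$ for every $x \in \Omega$, so $\tau_\Omega(g) \geq \frac{1}{2}\log(\lambda_1/\lambda_d)$. For the upper bound (assume first $\lambda_1 > \lambda_d$, so $[v_+] \neq [v_-]$), I would restrict to the projective line $\Pb(P)$ with $P = \Spanset(v_+, v_-)$: the set $\Pb(P) \cap \Omega$ is an open segment preserved by $g$, and since $\ol{g}|_P$ has the distinct eigenvalues $\lambda_1, \lambda_d$ its only fixed points on $\Pb(P)$ are $[v_+], [v_-]$, which must therefore be the endpoints. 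Parametrizing this segment as $[s v_+ + t v_-]$ and computing the cross-ratio shows $g$ moves every point of it by exactly $\frac{1}{2}\log(\lambda_1/\lambda_d)$; in particular this segment lies in $\Min(g)$ and $\tau_\Omega(g) \leq \frac{1}{2}\log(\lambda_1/\lambda_d)$.

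I expect the genuine obstacles to be the boundary and degenerate behavior rather than the inequalities themselves. First, one must confirm that the eigenvectors land in $\ol{C}$ and that $\Pb(P) \cap \Omega$ really is the open segment with endpoints $[v_\pm]$; this is where proper convexity and the distinctness $\lambda_1 > \lambda_d$ are used. Second, the case $\lambda_1 = \lambda_d$, in which all eigenvalue moduli coincide and the invariant segment collapses, requires separate treatment: here the claimed identity reads $\tau_\Omega(g) = 0$, the lower bound already gives $\tau_\Omega(g) \geq 0$, and for the matching upper bound one shows $H_\Omega(x, gx) \to 0$ as $x$ approaches the fixed boundary point $[v_+] \in \bdry\Omega$ along $C$, which is the main point needing care.
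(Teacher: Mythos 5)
Note first that the paper does not prove this proposition at all: it is quoted from \cite[Proposition 2.1]{CLT2015}, so your attempt has to stand on its own. Your dual-cone reformulation of $H_\Omega$ is correct, and your lower bound is correct and clean: the Perron--Frobenius eigenfunctionals $\phi_\pm \in C^*$ (which exist because $C^*$ is a proper cone preserved by $\ol{g}^{\,T}$) are strictly positive on the interior of $C$, and evaluating the two suprema at $\phi_\pm$ gives $H_\Omega(x,gx) \geq \frac{1}{2}\log\frac{\lambda_1}{\lambda_d}(g)$ for all $x$. The genuine gap is in the upper bound: your claim that proper convexity and $\lambda_1 > \lambda_d$ force $\Pb(P)\cap\Omega$ to be the open segment with endpoints $[v_\pm]$ is simply false. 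Take $\Omega$ the triangle $\{[x_1:x_2:x_3] : x_i > 0\}$ and $g = [\mathrm{diag}(\lambda_1,\lambda_2,\lambda_3)]$ with $\lambda_1 > \lambda_2 > \lambda_3$: the eigenlines for $\lambda_1$ and $\lambda_3$ are uniquely $[e_1]$ and $[e_3]$, and the open segment joining them lies in an \emph{edge} of the triangle, so $\Pb(P)\cap\Omega = \emptyset$ and your axis argument produces no point of $\Omega$ whatsoever --- even though, by the explicit formula in Example~\ref{ex:basic_properties_of_simplices}, every point of the triangle realizes $\tau_\Omega(g) = \frac{1}{2}\log(\lambda_1/\lambda_3)$. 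Since properly embedded simplices are the central objects of this paper, this degeneration is the typical situation, not a boundary pathology to be checked later. Your fallback plan for $\lambda_1 = \lambda_d$ is also not yet an argument: it is not true that $H_\Omega(x,gx) \to 0$ for an arbitrary approach $x \to [v_+]$ (for a parabolic of the ellipse, i.e.\ $z \mapsto z+1$ in the upper half-plane model of $\Hb^2$, the points $t+i$ converge to the fixed point while the displacement stays bounded below), so one must actually construct the approaching points.

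Both gaps can be closed inside your own Birkhoff framework by a weighted averaging trick that eliminates the case split entirely. Fix $\tilde u \in C$ and $\epsilon > 0$, set $c_k = e^{-\epsilon k}\lambda_1^{-k}$ for $k \geq 0$ and $c_k = e^{\epsilon k}\lambda_d^{-k}$ for $k < 0$, and let $\tilde z = \sum_{k \in \Zb} c_k \ol{g}^k \tilde u$. The series converges because $\lambda_1^{-1}\ol{g}$ and $\lambda_d\, \ol{g}^{-1}$ have spectral radius $1$, and $\tilde z \in \tilde u + \ol{C} \subset C$. One checks $c_{k-1} \leq e^\epsilon \lambda_1 c_k$ and $c_{k+1} \leq e^\epsilon \lambda_d^{-1} c_k$ for every $k \in \Zb$ (the junction at $k=0$ uses $\lambda_d \leq \lambda_1$), so reindexing gives, for every $\phi \in C^*$, the bounds $\phi(\ol{g}\tilde z) \leq e^\epsilon \lambda_1 \phi(\tilde z)$ and $\phi(\ol{g}^{-1}\tilde z) \leq e^\epsilon \lambda_d^{-1}\phi(\tilde z)$. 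Since $\ol{g}^{\,T}$ preserves $C^*$, the second supremum in your formula for $H_\Omega(z, gz)$ equals $\sup_{\phi}\phi(\ol{g}^{-1}\tilde z)/\phi(\tilde z)$, whence $H_\Omega(z, gz) \leq \frac{1}{2}\log\frac{\lambda_1}{\lambda_d} + \epsilon$. Letting $\epsilon \to 0$ yields $\tau_\Omega(g) \leq \frac{1}{2}\log\frac{\lambda_1}{\lambda_d}(g)$ in all cases at once (in particular the simplex case and the equal-moduli case), which together with your lower bound proves the proposition.
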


Next, given a group $G$ and an element $g \in G$, let $C_G(g)$ denote the centralizer of $g$ in $G$. Then given a subset $X \subset G$, define
\begin{align*}
C_G(X)= \cap_{x \in X} C_G(x).
\end{align*}

We will prove the following result about centralizers and minimal translation sets of Abelian subgroups.

\begin{theorem}(see Section~\ref{sec:minimal_sets})\label{thm:centralizers_act_cocpctly} Suppose that $(\Omega, \Cc, \Lambda)$ is a naive convex co-compact triple and $A \leq \Lambda$ is an Abelian subgroup. Then
\begin{align*}
\Min_{\Cc}(A): = \Cc \cap \bigcap_{a \in A} \Min(a) 
\end{align*}
is non-empty and $C_{\Lambda}(A)$ acts co-compactly on the convex hull of $\Min_{\Cc}(A)$ in $\Omega$. 
\end{theorem}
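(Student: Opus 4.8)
The plan is to first record that the action in the statement is well defined, then establish non-emptiness of $\Min_\Cc(A)$ by a properness-and-co-compactness limiting argument, and finally prove co-compactness by a rigidity argument for the associated flat. The well-definedness is immediate: if $h \in C_\Lambda(A)$ and $a \in A$, then since $h$ is an $H_\Omega$-isometry and $ha = ah$ we have $H_\Omega(hx, a hx) = H_\Omega(hx, h a x) = H_\Omega(x, ax)$ for all $x \in \Omega$, so $h \Min(a) = \Min(a)$. As $h$ also preserves $\Cc$, it preserves $\Min_\Cc(A) = \Cc \cap \bigcap_{a \in A}\Min(a)$ and hence the convex hull $F := \CH(\Min_\Cc(A))$. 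Thus $C_\Lambda(A)$ acts on $F$, and it remains to prove $\Min_\Cc(A) \neq \emptyset$ and that this action is co-compact.

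For non-emptiness I would first reduce to the case that $A$ is finitely generated: since $\Lambda$ acts properly and co-compactly on the proper geodesic space $(\Cc, H_\Omega)$, the Milnor--Schwarz lemma shows $\Lambda$, and therefore its subgroup $A$, is finitely generated. For a single $a \in A$, choose $x_n \in \Cc$ with $H_\Omega(x_n, a x_n) \to \inf_{\Cc} H_\Omega(\cdot, a \cdot)$; writing $x_n = \gamma_n k_n$ with $k_n$ in a compact fundamental set $K \subset \Cc$ and $\gamma_n \in \Lambda$, the displacement at $x_n$ equals the displacement of the conjugate $\gamma_n^{-1} a \gamma_n$ at $k_n$, so properness and discreteness force $\gamma_n^{-1} a \gamma_n$ to be eventually constant and $k_n$ to converge, producing a point of $\Cc$ realizing the infimum. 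That this infimum equals the global value $\tau_\Omega(a) = \tfrac12 \log \tfrac{\lambda_1}{\lambda_d}(a)$ of Proposition~\ref{prop:min_trans_compute} follows from the explicit description of $\Min(a)$ as a union of projective segments joining the top and bottom eigenspaces of $a$. For several commuting generators I would induct: because the elements of the Abelian group $A$ are simultaneously block-triangularizable, the global minimal displacement of each generator is attained on the flat spanned by the common top and bottom eigendata, so minimizing the next generator's displacement over the invariant, non-empty set $\Cc \cap \bigcap_{\text{previous}} \Min(\cdot)$ keeps the intersection non-empty.

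The main work is co-compactness of $C_\Lambda(A)$ on $F$, and I expect the central difficulty to lie here. I would argue by contradiction: if the action is not co-compact there is a sequence $x_n \in F$ with $H_\Omega(x_n, C_\Lambda(A) x_0) \to \infty$ for a fixed basepoint $x_0$. Using co-compactness of $\Lambda$ on $\Cc$, pick $\gamma_n \in \Lambda$ with $\gamma_n x_n$ confined to a fixed compact subset of $\Cc$. The crux is to show that each $\gamma_n$ lies within bounded $H_\Omega$-distance of $C_\Lambda(A)$, which then contradicts the divergence above. The key is the rigidity of $F$: its finitely many vertices are exactly the boundary points carrying the extreme eigendata of $A$, so boundedness of $\gamma_n x_n$ together with the eigenvalue estimates forces the translated flats $\gamma_n F$, equivalently the translated vertex sets, into finitely many configurations; discreteness of $\Lambda$ then makes $\gamma_n F$ eventually equal to a single flat, so after passing to a subsequence and absorbing a bounded error $\gamma_n$ lies in the setwise stabilizer $\Stab_\Lambda(F)$. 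Finally $\Stab_\Lambda(F)$ permutes the finitely many vertices of $F$, giving a homomorphism to a finite symmetric group whose kernel fixes every vertex; since fixing all the extreme eigendata of $A$ forces an element to commute with a finite-index subgroup of $A$, the centralizer $C_\Lambda(A)$ has finite index in $\Stab_\Lambda(F)$. Hence $\gamma_n$ is boundedly close to some $c_n \in C_\Lambda(A)$, so $c_n x_n$ is bounded, the desired contradiction. Making precise the claim that a far-translated copy of the flat must return onto itself — that is, converting boundedness of $\gamma_n x_n$ into genuine stabilization of $F$ — is the step I expect to require the most care, and is where the eigenvalue computations and the properness of the $\Lambda$-action do the real work.
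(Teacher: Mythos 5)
Your outline diverges from the paper's proof in ways that leave genuine gaps at both main steps. First, the reduction to finite generation is wrong as stated: subgroups of finitely generated groups need not be finitely generated, so Milnor--Schwarz applied to $\Lambda$ says nothing about $A$. The paper instead passes to the Zariski closure $\overline{A}^{\rm Zar}$, which is Abelian with finitely many components, and invokes Raghunathan's theorem (Lemma~\ref{lem:finite_generation}) for discrete subgroups of such groups; this is fixable but your justification is not. More seriously, your non-emptiness argument assumes exactly what is hardest to prove. Your compactness argument does show the infimum of $x \mapsto H_\Omega(x,ax)$ over $\Cc$ is attained, but there is no reason this infimum equals $\tau_\Omega(a)$: the set $\Min(a)$ can be empty (parabolic-type automorphisms have $\tau_\Omega(a)=0$ with no fixed point in $\Omega$), the claimed ``segments joining the top and bottom eigenspaces'' need not lie in $\overline{\Omega}$, and even when $\Min(a)\neq\emptyset$ nothing forces it to meet $\Cc$ a priori. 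Your inductive step for several generators presupposes a common flat ``spanned by the common top and bottom eigendata'' inside $\Cc$; commuting elements of $\PGL_d(\Rb)$ need not be simultaneously diagonalizable, and producing such a flat is precisely the content of Theorem~\ref{thm:abelian_cc_actions} (an $A$-invariant properly embedded simplex with vertices fixed by $A$), whose proof is the technical heart of the paper (center of mass, Proposition~\ref{prop:projections_onto_faces}, induction on dimension). The paper obtains non-emptiness by applying that theorem to a maximal Abelian $A' \geq A$ acting co-compactly on $\CH_\Omega(M'_r)$, and then checking via Proposition~\ref{prop:min_set_inv_simplex} that a simplex whose vertices are fixed lies in every $\Min(a)$; none of this machinery is replaced by anything in your sketch.

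Second, your co-compactness argument fails at its first move: $F=\CH_\Omega(\Min_\Cc(A))$ is in general not a simplex and has no ``finitely many vertices'' determined by eigendata --- if $A$ is finite, $\Min_\Cc(A)$ is the full fixed set in $\Cc$ (all of $\Cc$ when $A$ is trivial) --- so ``translated vertex sets in finitely many configurations'' has no content, and the finite-index claim $[\Stab_\Lambda(F):C_\Lambda(A)]<\infty$ is unjustified (elements fixing the same boundary data need not commute with $A$; they can differ by unipotents). The paper's contradiction runs on a displacement bound, not flat rigidity: working with the sublevel sets $M_r = \{x \in \Cc : H_\Omega(x,a_jx)\leq r,\ 1\leq j\leq m\}$, if $\gamma_n x_n$ stays in a compact set with $x_n \in M_r$, then each conjugate $\gamma_n^{-1}a_j\gamma_n$ displaces the basepoint a bounded amount, properness of the $\Lambda$-action forces these conjugates into a finite set, and a pigeonhole argument produces $\gamma_n\gamma_1^{-1} \in C_\Lambda(A)$, contradicting divergence (Lemma~\ref{lem:centralizer_one}, following Ruane). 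To pass from $M_r$ to its convex hull one further needs $\CH_\Omega(M_r)\subset M_{2^{d-1}r}$ (Lemma~\ref{lem:centralizer_two}, via Carath\'eodory and Crampon's estimate, Lemma~\ref{lem:Crampons_dist_est}); your proposal contains no analogue of this step, yet without it the points $x_n \in F$ carry no displacement control and the contradiction cannot even be set up.
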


\subsection{Outline of the paper} Sections~\ref{sec:notations} through~\ref{sec:faces} are mostly expository in nature. In Section~\ref{sec:notations} we set some basic notations, in Section~\ref{sec:examples} we construct some examples, in Section~\ref{sec:Hilbert_metric} we recall the definition of the Hilbert metric, and in Section~\ref{sec:faces} we establish some results about the faces of convex domains. 

Sections~\ref{sec:abelian_cc_action}  through~\ref{sec:pf_of_thm_max_abelian} are devoted to the proof of Theorem~\ref{thm:max_abelian}. In Section~\ref{sec:abelian_cc_action}, we give a characterization of naive convex co-compact actions of Abelian groups. In Section~\ref{sec:minimal_sets} we prove Theorem~\ref{thm:centralizers_act_cocpctly}. Finally, in Section~\ref{sec:pf_of_thm_max_abelian} we combine the results in the previous two sections to prove Theorem~\ref{thm:max_abelian}.

\subsection*{Acknowledgements} The authors thank Harrison Bray, Ludo Marquis, and Ralf Spatzier for helpful conversations. They also thank the referee for their useful comments and corrections. A. Zimmer thanks the University of Michigan for hospitality during a visit where work on this project started. 

M. Islam is partially supported by the National Science Foundation under grant 1607260 and A. Zimmer is partially supported by the National Science Foundation under grant 1904099. 

\section{Some notations}\label{sec:notations} 

In this section we set some notations that we will use for the rest of the paper. 

If $V \subset \Rb^d$ is a linear subspace, we will let $\Pb(V) \subset \Pb(\Rb^d)$ denote its projectivization. In most other cases, we will use $[o]$ to denote the projective equivalence class of an object $o$, for instance: 
\begin{enumerate}
\item if $v \in \Rb^{d} \setminus \{0\}$, then $[v]$ denotes the image of $v$ in $\Pb(\Rb^{d})$, 
\item if $\phi \in \GL_{d}(\Rb)$, then $[\phi]$ denotes the image of $\phi$ in $\PGL_{d}(\Rb)$, and 
\item if $T \in \End(\Rb^{d}) \setminus\{0\}$, then $[T]$ denotes the image of $T$ in $\Pb(\End(\Rb^{d}))$. 
\end{enumerate}
We also identify $\Pb(\Rb^d) = \Gr_1(\Rb^d)$, so for instance: if $x \in \Pb(\Rb^d)$ and $V \subset \Rb^d$ is a linear subspace, then $x \in \Pb(V)$ if and only if $x \subset V$. 

A \emph{line segment} in $\Pb(\Rb^{d})$ is a connected subset of a projective line. Given two points $x,y \in \Pb(\Rb^{d})$ there is no canonical line segment with endpoints $x$ and $y$, but we will use the following convention: if $C \subset \Pb(\Rb^d)$ is a properly convex set and $x,y \in \overline{C}$, then (when the context is clear) we will let $[x,y]$ denote the closed line segment joining $x$ to $y$ which is contained in $\overline{C}$. In this case, we will also let $(x,y)=[x,y]\setminus\{x,y\}$, $[x,y)=[x,y]\setminus\{y\}$, and $(x,y]=[x,y]\setminus\{x\}$.

Along similar lines, given a properly convex subset $C \subset \Pb(\Rb^d)$ and a subset $X \subset \overline{C}$ we will let 
\begin{align*}
{\rm ConvHull}_C(X)
\end{align*}
 denote the smallest convex subset of $\overline{C}$ which contains $X$. For instance, with our notation  $[x,y] = {\rm ConvHull}_{C}(\{x,y\})$ when $x,y \in \overline{C}$. 

Given a group $G \leq \PGL_d(\Rb)$ and a subset $X \subset \Pb(\Rb^d)$ the \emph{stabilizer of $X$ in $G$} is
\begin{align*}
\Stab_G(X) := \{ g \in G : g X = X\}.
\end{align*}
In the case when $\Omega \subset \Pb(\Rb^d)$ is a properly convex domain and $G =\Aut(\Omega)$, we will use the notation
\begin{align*}
\Stab_{\Omega}(X) := \Stab_{\Aut(\Omega)}(X). 
\end{align*}

\section{Some examples}\label{sec:examples}

In this section we construct some examples. In our first example we recall some basic properties of simplices. 

\begin{example}\label{ex:basic_properties_of_simplices} Let 
\begin{align*}
S = \left\{ [x_1:\dots:x_{d+1}] \in \Pb(\Rb^{d+1}) : x_1>0, \dots, x_{d+1}> 0\right\}.
\end{align*}
Then $S$ is a $d$-dimensional simplex. Let $G \leq \GL_{d+1}(\Rb)$ denote the group generated by the group of diagonal matrices with positive entries and the group of permutation matrices. Then 
\begin{align*}
\Aut(S) = \left\{ [g] \in \PGL_{d+1}(\Rb) : g \in G\right\}.
\end{align*}
The Hilbert metric on $S$ can be explicitly computed as:
\begin{align*}
H_S\Big([x_1:\dots:x_{d+1}], [y_1:\dots:y_{d+1}] \Big) =\max_{1\leq i,j \leq d+1} \frac{1}{2} \abs{\log \frac{x_i y_j}{y_i x_j}}.
\end{align*}
In particular, if 
\begin{align*}
\Phi\Big([x_1:\dots:x_{d+1}]\Big) = \left( \log \frac{x_2}{x_1}, \dots,  \log \frac{x_{d+1}}{x_1} \right)
\end{align*}
and ${\rm dist}$ is the distance on $\Rb^d$ given by 
\begin{align*}
{\rm dist}(v,w) = \frac{1}{2} \max\left\{ \max_{1 \leq i \leq d} \abs{v_i-w_i}, \max_{1\leq i,j \leq d} \abs{(v_i-v_j)-(w_i-w_j)} \right\},
\end{align*}
then $\Phi$ induces an isometry $(S,H_S) \rightarrow (\Rb^d, {\rm dist})$. Hence, $(S,H_S)$ is quasi-isometric to real Euclidean $d$-space. For more details, see~\cite[Proposition 1.7]{N1988}, ~\cite{dlH1993} or ~\cite{V2014}.
\end{example} 

The next example shows that the maximality assumption in Theorem~\ref{thm:max_abelian} is necessary. 

\begin{example}\label{ex:non_maximal} Again let
\begin{align*}
S = \left\{ [x_1:\dots:x_{d+1}] \in \Pb(\Rb^{d+1}) : x_1>0, \dots, x_{d+1}> 0\right\}.
\end{align*}
Then the discrete group
\begin{align*}
\Lambda := \left\{ \begin{bmatrix} e^{z_1} & & \\ & \ddots &  \\ & & e^{z_{d+1}} \end{bmatrix} : z_1,\dots,z_{d+1} \in \Zb \right\} \leq \Aut(S)
\end{align*}
acts co-compactly on $S$ and hence $(S,S,\Lambda)$ is a naive convex co-compact triple. 

Fix $1 \leq k \leq d$ and homomorphisms $\phi_1, \dots, \phi_{d+1} : \Zb^k \rightarrow \Zb$ such that 
\begin{align*}
w \in \Zb^k \rightarrow (\phi_1(w),\dots, \phi_{d+1}(w))\in \Zb^{d+1}
\end{align*}
is injective and $\phi_i \neq \phi_j$ when $i \neq j$. Then the subgroup 
\begin{align*}
A := \left\{ \begin{bmatrix} e^{\phi_1(w)} & & \\ & \ddots &  \\ & & e^{\phi_{d+1}(w)} \end{bmatrix} : w \in \Zb^k \right\}.
\end{align*}
does not act co-compactly on any convex subset on $S$. If it did, then Theorem~\ref{thm:abelian_cc_actions} implies that there exists a properly embedded simplex $S_1 \subset S$ where $A \leq \Stab_{\Lambda}(S_1)$, $A$ fixes the vertices of $S_1$, and $A$ acts co-compactly on $S_1$. But, since $\phi_i \neq \phi_j$ when $i \neq j$, the only fixed points of $A$ in $\overline{S}$ are the vertices of $S$. So the vertices of $S_1$ are also vertices of $S$. But then, since $S_1 \subset S$, we must have $S_1 = S$. Finally since $A \leq \Lambda$ has infinite index,  the quotient $A \backslash S_1 = A \backslash S$ is non-compact. So we have a contradiction.   \end{example}

The next example is a naive convex co-compact subgroup which is not convex co-compact. 

\begin{example}\label{ex:non_convex_cocompact} Suppose $\Omega \subset \Pb(\Rb^d)$ is a properly convex domain and $\Lambda \leq \Aut(\Omega)$ is a discrete group which acts co-compactly on $\Omega$. 

Let $\pi: \Rb^d \rightarrow \Pb(\Rb^d)$ be the natural projection. Then $\pi^{-1}(\Omega) = C \cup -C$ where $C \subset \Rb^d$ is some properly convex cone.   Then define
\begin{align*}
\Omega_{\star} & := \{ [(v,w)] : v,w \in C\} \subset \Pb(\Rb^{2d}), \\
\Cc_{\star}& :=\{ [(v,v)] : v \in C\}  \subset \Pb(\Rb^{2d}), \text{ and} \\
\Lambda_{\star}&:=\{ [g \oplus g] : g \in \GL_d(\Rb), [g] \in \Lambda\} \subset \PGL_{2d}(\Rb). 
\end{align*}

Then $(\Omega_{\star}, \Cc_{\star}, \Lambda_{\star})$ is a naive convex co-compact triple. We will now show that  
\begin{align*}
\Cc_{\Omega_{\star}}(\Lambda_{\star}) =\CH_{\Omega_\star} \{\Lc_{\Omega_\star}(\Lambda_\star)\} = \Omega_{\star}
\end{align*}
and hence $\Lambda_{\star} \leq \Aut(\Omega_{\star})$ is not a convex co-compact subgroup. Since $\Lambda$ acts co-compactly on $\Omega$, for every $[v] \in \partial \Omega$ there exist $p \in C$ and $g_n \in \Lambda$ such that $[v]=\lim_{n \to \infty} [g_n] [p]$ (see for instance Proposition~\ref{prop:projections_onto_faces} below).  Then, for all $t>0$, 
\[ [(v,tv)]= \lim_{n \to \infty} [g_n \oplus g_n] ~ [(p,tp)] \in \Lc_{\Omega_{\star}}(\Lambda_\star).\] Thus $\{ [(v,0)]: [v] \in \partial \Omega \} \subset \Lc_{\Omega_\star}(\Lambda_\star)$ which implies that $\{ [(v,0)]: v \in C\} \subset \Cc_{\Omega_\star}(\Lambda_\star)$. By symmetry, $\{ [(0,w)]: w \in C\} \subset \Cc_{\Omega_\star}(\Lambda_\star)$. Thus $ \Cc_{\Omega_\star}(\Lambda_\star) =\Omega_\star$. 

We can also ``thicken'' $\Cc_{\star}$ to obtain other naive convex co-compact triples that do not correspond to convex co-compact groups. By Proposition \ref{prop:convexity-of-nbd}, 
\begin{align*}
\Cc_{R,\star} :=  \left\{  y \in \Omega_{\star} : H_{\Omega_{\star}}(y, \Cc_{\star}) \leq R \right\}
\end{align*}
is a closed convex subset of $\Omega_\star$. Thus $(\Omega_{\star},\Cc_{R,\star} , \Lambda_{\star})$ is also a naive convex co-compact triple. 
\end{example}

\section{Convexity and the Hilbert metric}\label{sec:Hilbert_metric}  

In this section we recall the definition of convex sets in projective space and the classical Hilbert metric on properly convex (relatively) open sets. 

\begin{definition} \ 
\begin{enumerate}
\item A subset $C \subset \Pb(\Rb^d)$ is \emph{convex} if there exists an affine chart $\mathbb{A}$ of $\Pb(\Rb^d)$ where $C \subset \mathbb{A}$ is a convex subset. 
\item A subset $C \subset \Pb(\Rb^d)$ is \emph{properly convex} if there exists an affine chart $\mathbb{A}$ of $\Pb(\Rb^d)$ where $C \subset \mathbb{A}$ is a bounded convex subset. 
\item When $C$ is a properly convex set which is open in $\Pb(\Rb^d)$ we say that $C$ is a \emph{properly convex domain}.
\end{enumerate}
\end{definition}

Notice that if $C \subset \Pb(\Rb^d)$ is convex, then $C$ is a convex subset of every affine chart that contains it. We also make the following topological definitions.

\begin{definition}\label{defn:topology} Suppose $C \subset \Pb(\Rb^d)$ is a properly convex set. The \emph{relative interior of $C$}, denoted by $\relint(C)$, is  the interior of $C$ in $\Pb( \Spanset C)$. In the case that $C = \relint(C)$, then $C$ is said to be \emph{open in its span}. The \emph{boundary of $C$} is $\partial C : = \overline{C} \setminus \relint(C)$, the \emph{ideal boundary of $C$} is
\begin{align*}
\partiali C := \partial C \setminus C,
\end{align*}
and the \emph{non-ideal boundary of $C$} is
\begin{align*}
\partialni C := \partial C \cap C
\end{align*}
Finally, we define $\dim C$ to be the dimension of $\relint(C)$ (notice that $\relint(C)$ is homeomorphic to $\Rb^{\dim C}$). 
\end{definition}

Recall that a subset $A \subset B \subset \Pb(\Rb^d)$ is properly embedded if the inclusion map $A \hookrightarrow B$ is proper. With the notation in Definition~\ref{defn:topology} we have the following characterization of properly embedded subsets. 

\begin{observation} Suppose $C \subset \Pb(\Rb^d)$ is a properly convex set. A convex subset $S \subset C$ is properly embedded if and only if $\partiali S \subset \partiali C$. 
\end{observation}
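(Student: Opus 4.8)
The plan is to reduce the topological condition ``$S \hookrightarrow C$ is proper'' to the purely set-theoretic condition ``$S$ is closed in $C$'', and then to match the latter directly with the inclusion $\partiali S \subseteq \partiali C$. The key point is that for an inclusion map properness and closedness of the subspace coincide, and since everything lives inside the compact metrizable space $\Pb(\Rb^d)$, I can argue entirely with convergent sequences and their limits, avoiding any appeal to local compactness of $C$ (a general convex set need not be locally compact).

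First I would record the identities $\partiali S = \overline{S} \setminus S$ and $\partiali C = \overline{C} \setminus C$, which follow immediately from Definition~\ref{defn:topology} since $\relint S \subseteq S$ and $\relint C \subseteq C$. Because $S \subseteq C$ forces $\overline{S} \subseteq \overline{C}$, the set $\partiali S = \overline{S}\setminus S$ is automatically contained in $\overline{C}$; hence the inclusion $\partiali S \subseteq \partiali C = \overline{C}\setminus C$ is equivalent to $\partiali S \cap C = \emptyset$, that is, to $(\overline{S}\setminus S)\cap C = \emptyset$. Since $S \subseteq \overline{S}\cap C$ always holds, this is in turn equivalent to $\overline{S}\cap C = S$, i.e.\ to the assertion that $S$ is a closed subset of $C$.

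Next I would establish the two implications between ``$S$ closed in $C$'' and ``$S \hookrightarrow C$ proper''. For the backward direction, if $S$ is closed in $C$ and $K \subseteq C$ is compact, then the preimage of $K$ under the inclusion is $S \cap K$, a closed subset of the compact set $K$, hence compact; so the inclusion is proper. For the forward direction I argue contrapositively: suppose $S$ is not closed in $C$, so there is $x \in \overline{S}\cap C$ with $x \notin S$, and choose $s_n \in S$ with $s_n \to x$. Then $K := \{s_n : n \geq 1\}\cup\{x\}$ is a convergent sequence together with its limit, hence a compact subset of $C$, while its preimage $S \cap K = \{s_n : n \geq 1\}$ does not contain the limit point $x$ and is therefore not compact; this contradicts properness. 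Combining these two implications with the reduction of the previous paragraph yields the stated equivalence.

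I do not anticipate a genuine obstacle; the only delicate point is resisting the temptation to invoke local compactness of $C$, and both directions above are arranged to sidestep it (the forward direction manufactures its own explicit compact witness as a convergent sequence plus limit, and the backward direction uses only that a closed subset of a compact set is compact). In writing this up I would be careful to state that all closures are taken in $\Pb(\Rb^d)$ and to note that $\Pb(\Rb^d)$ is compact and metrizable, so that compactness may be tested sequentially throughout.
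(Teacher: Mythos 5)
Your proof is correct. The paper gives no proof of this Observation---it is stated as an immediate consequence of the definitions---and your argument (properness of the inclusion $\Leftrightarrow$ $S$ closed in $C$ $\Leftrightarrow$ $\partiali S \subset \partiali C$, via the identity $\partiali S = \overline{S}\setminus S$ and the convergent-sequence-plus-limit compact witness for the forward direction) is precisely the routine verification the authors leave to the reader.
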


For distinct points $x,y \in \Pb(\Rb^{d})$ let $\overline{xy}$ be the projective line containing them. Suppose $C \subset \Pb(\Rb^{d})$ is a properly convex set which is open in its span. If $x,y \in C$ are distinct let $a,b$ be the two points in $\overline{xy} \cap \partial C$ ordered $a, x, y, b$ along $\overline{xy}$. Then define \emph{the Hilbert distance between $x$ and $y$} to be
\begin{align*}
H_{C}(x,y) = \frac{1}{2}\log [a, x,y, b]
\end{align*}
 where 
 \begin{align*}
 [a,x,y,b] = \frac{\abs{x-b}\abs{y-a}}{\abs{x-a}\abs{y-b}}
 \end{align*}
 is the cross ratio. Using the invariance of the cross ratio under projective maps and the convexity of $C$ it is possible to establish the following (see for instance~\cite[Section 28]{BK1953}). 
 
 \begin{proposition}\label{prop:hilbert_basic}
Suppose $C \subset \Pb(\Rb^{d})$ is a properly convex set which is open in its span. Then $H_{C}$ is a complete $\Aut(C)$-invariant proper metric on $C$ which generates the standard topology on $C$. Moreover, if $p,q \in C$, then there exists a geodesic joining $p$ and $q$ whose image is the line segment $[p,q]$.
\end{proposition}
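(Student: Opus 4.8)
The plan is to prove Proposition~\ref{prop:hilbert_basic} directly from the definition of $H_C$ via the cross ratio, establishing each asserted property in turn: that $H_C$ is a metric, that it is $\Aut(C)$-invariant, proper, complete, generates the standard topology, and that line segments are geodesics.

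First I would verify that $H_C$ is a metric. Symmetry is immediate from the cross ratio formula, since swapping $x$ and $y$ simultaneously swaps the roles of the endpoints $a$ and $b$, leaving the absolute value of the logarithm of the cross ratio unchanged. For positivity, since $C$ is properly convex the two boundary points $a,b$ are genuinely distinct from $x$ and $y$, so the cross ratio is strictly greater than $1$ when $x \neq y$, giving $H_C(x,y) > 0$; and $H_C(x,x) = 0$ by convention. The triangle inequality is the one genuinely substantive point. The clean way is to reduce to an affine chart (using that $C$ lies in a bounded convex subset of some chart) and compute: for three points, either they are collinear, in which case additivity of the logarithm of the cross ratio along a line gives equality, or they span a triangle, in which case one projects onto the line through two of them and uses convexity of $C$ to compare boundary intersection points. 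Alternatively I would simply cite~\cite{BK1953} for this classical inequality, as the excerpt already suggests.

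Next, $\Aut(C)$-invariance follows because elements of $\PGL_d(\Rb)$ are projective transformations, the cross ratio is a projective invariant, and any $g \in \Aut(C)$ carries $\partial C$ to $\partial C$, hence carries the two boundary points $a,b$ on $\overline{xy}$ to the two boundary points on $\overline{g(x)g(y)}$ in the correct order. That $H_C$ generates the standard topology, and that it is proper and complete, I would handle together by showing that in a fixed affine chart the $H_C$-balls are comparable to Euclidean balls: as $x,y$ vary in a compact subset of $C$ the boundary points $a,b$ stay at a definite distance, so $H_C$ is comparable to the Euclidean metric on compact sets, giving the topology statement; properness then follows because an $H_C$-bounded set stays away from $\partial C$ by a definite amount (as $x$ approaches $\partial C$ the cross ratio, and hence the distance to a fixed interior basepoint, blows up) and is therefore contained in a compact subset of $C$, and completeness is a formal consequence of properness for a metric generating the given topology.

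Finally, for the geodesic statement I would show the restriction of $H_C$ to a segment $[p,q]$ is additive: for $p, z, q$ collinear in that order along $\overline{pq}$, with the same pair of boundary points $a,b$ serving all three, the identity $[a,p,q,b] = [a,p,z,b]\cdot[a,z,q,b]$ for cross ratios along a line gives $H_C(p,q) = H_C(p,z) + H_C(z,q)$, so $[p,q]$ is an isometrically embedded interval and hence a geodesic. I expect the triangle inequality to be the main obstacle, since it is the only step requiring a genuine convexity argument rather than a direct computation or a projective-invariance observation; everything else reduces to properties of the cross ratio and elementary estimates in an affine chart.
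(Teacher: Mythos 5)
Your proposal is correct and takes essentially the same route as the paper: the paper offers no argument of its own, remarking only that the result follows from projective invariance of the cross ratio together with convexity and citing~\cite[Section 28]{BK1953}, which is exactly the classical cross-ratio proof you outline (and for the triangle inequality, the only substantive step, you sensibly fall back on that same citation). The remaining items in your sketch---symmetry and positivity from the cross-ratio formula, $\Aut(C)$-invariance, comparison with the Euclidean metric on compact sets for the topology, properness via blow-up of the distance near $\partial C$, completeness as a consequence of properness, and the multiplicative identity $[a,p,q,b]=[a,p,z,b]\cdot[a,z,q,b]$ for the geodesic claim---are all filled in correctly.
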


Convexity is preserved under taking $r$-neighbourhoods in the Hilbert metric of closed convex sets.

\begin{proposition}
\label{prop:convexity-of-nbd}\cite[Result 18.9]{HB1955}
If $~\Omega$ is a properly convex domain, $\Dc \subset \Omega$ is a non-empty closed convex set, and $r\geq 0$, then \[ \Nc_{r}(\Dc):=\{ x \in \Omega : \hil(x, \Dc) < r \}\] is a convex subset of $\Omega$. 
\end{proposition}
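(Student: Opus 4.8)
The plan is to pass to the cone picture and reduce the statement to a linear condition that is visibly stable under convex combinations. Write $\Omega=\Pb(C)$ for a properly convex open cone $C\subset\Rb^d$. The key reformulation I would establish is a ``sandwich lemma'': for $x=[\hat x]$ and $d=[\hat d]$ in $\Omega$ and $r>0$,
\[
H_\Omega(x,d)<r \iff \exists\text{ a representative }\hat d\in C:\ \hat x-e^{-r}\hat d\in C\ \text{ and }\ e^{r}\hat d-\hat x\in C.
\]
This is the crux. It is worth stressing why the naive route fails: one cannot simply pick $d_i\in\Dc$ with $H_\Omega(x_i,d_i)<r$ and bound the distance from a point of $[x_0,x_1]$ to the ``matched'' point of $[d_0,d_1]$, because $H_\Omega$ is \emph{not} convex along pairs of line segments --- already on a triangle the distance between matched points of two segments can exceed the distances at both endpoints. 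The sandwich lemma circumvents this: it compares $x$ and $d$ through the cone at a fixed radius, and membership in the convex cone $C$, unlike the matched-point distance, is preserved under convex combination.

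To prove the sandwich lemma I would reduce to two dimensions. Since $H_\Omega(x,d)$ depends only on the line $\overline{xd}$ and its intersection with $\partial\Omega$, it equals the Hilbert distance computed in the planar sector $C\cap P$, where $P=\Spanset(\hat x,\hat d)$. Let $\hat a,\hat b$ span the two extreme rays of this sector, so that $[\hat a],[\hat b]$ are the endpoints $a,b$ of $\overline{xd}\cap\partial\Omega$, and write $\hat x=\alpha_1\hat a+\alpha_2\hat b$ and $\hat d=\beta_1\hat a+\beta_2\hat b$ with all coefficients positive. A direct cross-ratio computation then gives
\[
e^{2H_\Omega(x,d)}=\frac{\max\{\alpha_1/\beta_1,\ \alpha_2/\beta_2\}}{\min\{\alpha_1/\beta_1,\ \alpha_2/\beta_2\}}.
\]
On the other hand, rescaling $\hat d\mapsto s\hat d$ replaces each ratio $\alpha_i/\beta_i$ by $(\alpha_i/\beta_i)/s$, and (using that for $v\in P$ one has $v\in C$ iff both of its $\hat a,\hat b$-coordinates are positive) the conditions $\hat x-e^{-r}\hat d\in C$ and $e^{r}\hat d-\hat x\in C$ say precisely that $e^{-r}<(\alpha_i/\beta_i)/s<e^{r}$ for $i=1,2$. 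Such an $s$ exists exactly when the spread above is less than $e^{2r}$, that is, when $H_\Omega(x,d)<r$; this proves the lemma.

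Granting the lemma, the Proposition follows cleanly. Let $\wt{\Dc}=\{t\hat d:\ t>0,\ [\hat d]\in\Dc\}$, which is a convex cone because $\Dc$ is convex. Given $x_0,x_1\in\Nc_r(\Dc)$, fix representatives $\hat x_0,\hat x_1\in C$; by the lemma choose $\hat d_0,\hat d_1\in\wt{\Dc}$ with $\hat x_i-e^{-r}\hat d_i\in C$ and $e^{r}\hat d_i-\hat x_i\in C$. For $z\in[x_0,x_1]$ write $\hat z=(1-t)\hat x_0+t\hat x_1$ and set $\hat d_z=(1-t)\hat d_0+t\hat d_1\in\wt{\Dc}$, so $[\hat d_z]\in\Dc$. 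Since $C$ is a convex cone, it is closed under positive linear combinations, so
\[
\hat z-e^{-r}\hat d_z=(1-t)\big(\hat x_0-e^{-r}\hat d_0\big)+t\big(\hat x_1-e^{-r}\hat d_1\big)\in C,
\]
and likewise $e^{r}\hat d_z-\hat z\in C$; by the lemma $H_\Omega(z,[\hat d_z])<r$, so $z\in\Nc_r(\Dc)$.

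The main obstacle is the sandwich lemma --- more precisely, recognizing that the right quantity to propagate along $[x_0,x_1]$ is the \emph{pair of cone-membership relations at radius} $r$, rather than the Hilbert distance to a matched point. Once this reformulation is in place the convexity argument is immediate, and one sees that neither closedness nor non-emptiness of $\Dc$ is actually used; the only inputs are the cross-ratio formula for $H_\Omega$ and the fact that $C$ is an open convex cone.
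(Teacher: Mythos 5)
Your proof is correct, and the comparison here is necessarily one-sided: the paper does not prove this proposition at all --- it quotes it from Busemann \cite{HB1955} and remarks that a proof can be found in \cite[Corollary 1.10]{CLT2015} --- so you have supplied an argument where the paper supplies a citation. Your route, lifting to the properly convex cone $C$ with $\Omega=\Pb(C)$ and establishing the sandwich characterization ($H_\Omega(x,d)<r$ iff some lift $\hat d$ satisfies $\hat x-e^{-r}\hat d\in C$ and $e^{r}\hat d-\hat x\in C$), is essentially the standard cone argument and is close in spirit to the proof in the reference the paper points to. I checked the details: in the planar sector $C\cap P$ the cross-ratio computation does give $e^{2H_\Omega(x,d)}=\max_i(\alpha_i/\beta_i)/\min_i(\alpha_i/\beta_i)$, the two cone memberships for the rescaled lift $s\hat d$ amount to $s\in\bigl(\max_i(\alpha_i/\beta_i)e^{-r},\ \min_i(\alpha_i/\beta_i)e^{r}\bigr)$, and this interval is non-empty exactly when $H_\Omega(x,d)<r$; the final step then only uses that $C$ and the cone $\wt{\Dc}$ over $\Dc$ are convex cones, so the two memberships persist under convex combinations of lifts. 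Your diagnosis of why the naive matched-point argument fails is also sound: the paper's only internal tool of this kind is Crampon's additive estimate (Lemma~\ref{lem:Crampons_dist_est}), which would only place $[x_0,x_1]$ in $\Nc_{2r}(\Dc)$, and the distance between matched points of two segments is indeed not quasi-convex for an arbitrary matching --- the crux of your proof is precisely that $\hat d_z$ is formed with the particular scalings produced by the sandwich lemma rather than with a pre-assigned parametrization of $[d_0,d_1]$. Two trivial loose ends you should spell out: the degenerate cases ($x=d$, where $\hat d=\hat x$ works; and $r=0$ or $\Dc=\emptyset$, where $\Nc_r(\Dc)$ is empty hence convex), and the standard fact that every point of $[x_0,x_1]\subset\Omega$ has a lift of the form $(1-t)\hat x_0+t\hat x_1$ with $t\in[0,1]$ --- this is also what makes $\wt{\Dc}$ convex, so it deserves one sentence rather than silent use.
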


\begin{remark}
A proof can also be found in~\cite[Corollary 1.10]{CLT2015}.
\end{remark}

Using an argument of Frankel~\cite{Fra1989} we define a notion of ``center of mass'' for a compact set in a properly convex domain. Let $\Kc_d$ denote the set of all pairs $(\Omega, K)$ where $\Omega \subset \Pb(\Rb^d)$ is a properly convex domain and $K \subset \Omega$ is a compact subset. 

\begin{proposition}\label{prop:center_of_mass} There exists a function
\begin{align*}
(\Omega, K) \in \Kc_d \, \longmapsto \, {\rm CoM}_\Omega(K) \in \Pb(\Rb^d)
\end{align*}
such that:
\begin{enumerate}
\item ${\rm CoM}_\Omega(K)  \in {\rm ConvHull}_\Omega(K)$, 
\item ${\rm CoM}_\Omega(K) = {\rm CoM}_\Omega({\rm ConvHull}_\Omega(K))$, and
\item if $g \in \PGL_d(\Rb)$, then $g{\rm CoM}_\Omega(K)={\rm CoM}_{g\Omega}(gK)$,
\end{enumerate}
for every $(\Omega, K) \in \Kc_d$. 
\end{proposition}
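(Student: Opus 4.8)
The plan is to build the center-of-mass function using Frankel's syzygy / variational argument adapted to the projective setting. The key idea is to associate to each pair $(\Omega, K) \in \Kc_d$ a canonical point of $\Omega$ by solving a convex minimization problem whose solution is forced to lie in ${\rm ConvHull}_\Omega(K)$ and which transforms equivariantly under $\PGL_d(\Rb)$. First, I would fix an affine chart $\mathbb{A}$ containing $\overline{\Omega}$ so that $\Omega$ is a bounded convex open subset of an affine space $\Rb^{d-1}$; this lets me work with ordinary Euclidean convexity while remembering that the final answer must not depend on the chart. The natural candidate is a point minimizing a strictly convex ``potential'' function on ${\rm ConvHull}_\Omega(K)$ defined intrinsically from $\Omega$, for instance an integral or supremum involving the Hilbert-metric structure, so that the minimizer is unique and hence canonical.

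The main steps, in order, are as follows. First I would define, for each $x \in \Omega$, a functional measuring how ``central'' $x$ is relative to $K$ — a clean choice is $F_{\Omega,K}(x) = \int_K H_\Omega(x,y)^2 \, d\mu(y)$ for a fixed canonical measure $\mu$ on $K$ (e.g. the restriction of a volume measure), or alternatively $\sup_{y \in K} H_\Omega(x,y)$ if one prefers the circumcenter. The delicate point is that $H_\Omega$ is not a $\CAT(0)$ metric, so strict convexity of $F_{\Omega,K}$ along Hilbert geodesics is not automatic; this is where Frankel's observation enters. Second, I would use properness of $H_\Omega$ (Proposition~\ref{prop:hilbert_basic}) together with the fact that $\Nc_r(\Dc)$ stays inside $\Omega$ for a closed convex $\Dc = {\rm ConvHull}_\Omega(K)$ to conclude that any minimizing sequence stays in a Hilbert-compact subset of $\Omega$, so a minimizer exists. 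Third, I would establish uniqueness of the minimizer: since every minimizer lies in the compact convex set ${\rm ConvHull}_\Omega(K)$, establishing item (1) simultaneously, I would show two distinct minimizers would contradict a strict-convexity estimate along the connecting segment $[x_0,x_1] \subset {\rm ConvHull}_\Omega(K)$. Finally, property (3) follows because the defining functional is built only from $\Omega$, $K$, and $H_\Omega$, all of which are $\PGL_d(\Rb)$-equivariant: if $g \in \PGL_d(\Rb)$, then $H_{g\Omega}(gx,gy)=H_\Omega(x,y)$, so $F_{g\Omega,gK}(gx)=F_{\Omega,K}(x)$, and the unique minimizer transforms by $g$. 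Property (2) is immediate once one checks $F_{\Omega,K}$ and $F_{\Omega,{\rm ConvHull}_\Omega(K)}$ have the same minimizer, which holds because the convex hull is determined by $K$.

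The hard part will be the strict-convexity / uniqueness step, since the Hilbert metric fails to be $\CAT(0)$ in general (as the excerpt emphasizes, $(\Omega, H_\Omega)$ is $\CAT(0)$ only in the hyperbolic case). Frankel's trick circumvents this: rather than relying on convexity of $H_\Omega$ itself, one works with an auxiliary strictly convex function coming from the affine structure of the cone over $\Omega$ — concretely, lifting to the properly convex cone $C \subset \Rb^d$ and using the logarithmic characteristic (Vinberg) function $\phi(v) = \int_{C^*} e^{-\langle v,\xi\rangle}\,d\xi$, whose Hessian is positive definite and which is equivariant under $\GL_d(\Rb)$ up to a positive scalar that disappears in projectivization. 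I would therefore replace the naive Hilbert functional by one built from $\phi$, for which strict convexity is genuine and chart-independence is automatic. Verifying that this functional has a unique interior minimizer in ${\rm ConvHull}_\Omega(K)$, and that the minimizer depends only on $(\Omega,K)$ and transforms correctly, then yields all three properties. The remaining details — existence via properness, and the equivariance computation — are routine once the right strictly convex potential is in hand.
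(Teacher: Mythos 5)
Your construction never closes the one gap it correctly identifies: producing a functional on $\Omega$ whose minimizer is \emph{unique}, lies in the hull, is $\PGL_d(\Rb)$-equivariant, and depends on $K$ only through ${\rm ConvHull}_\Omega(K)$. Your first candidate, $F_{\Omega,K}(x)=\int_K H_\Omega(x,y)^2\,d\mu(y)$, requires a canonical measure on an arbitrary compact set (there is none; $K$ may be finite or of measure zero), and it fails property (2): the minimizer for a measure supported on $K$ and the minimizer for a measure on ${\rm ConvHull}_\Omega(K)$ are different points in general (already for a quadrilateral in Euclidean geometry), so ``(2) is immediate'' is false for this functional. Your second candidate, the circumradius $\sup_{y\in K}H_\Omega(x,y)$, does satisfy (2) (Hilbert balls are convex, so $K$ and its hull have the same circumradius about every point), but its minimizer is genuinely non-unique: in a simplex the Hilbert metric is isometric to a polyhedral normed space (Example~\ref{ex:basic_properties_of_simplices}), where Chebyshev centers form positive-dimensional convex sets. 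Your proposed repair via the characteristic function $\phi(v)=\int_{C^*}e^{-\langle v,\xi\rangle}d\xi$ is precisely the step that is missing: strict convexity of $\phi$ on the cone does not yield any strictly convex centrality functional on $\Omega$ relative to $K$, and you never say what that functional is, why its minimizer lies in ${\rm ConvHull}_\Omega(K)$, or why it satisfies (2); the equivariance of $\phi$ only up to the factor $\abs{\det g}^{-1}$ also creates a normalization problem you do not address. As written, the proof does not go through.

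The paper's argument (which is the actual content of Frankel's trick) sidesteps uniqueness entirely rather than forcing it. Set $C_0={\rm ConvHull}_\Omega(K)$; given $C_n$, let $C_n(r)=C_n\cap\bigcap_{c\in C_n}\{p\in\Omega : H_\Omega(p,c)\le r\}$, let $r_n$ be the smallest $r$ with $C_n(r)\neq\emptyset$, and set $C_{n+1}=C_n(r_n)$. Each $C_{n+1}$ is a non-empty closed convex set (an intersection of convex sets, using Proposition~\ref{prop:convexity-of-nbd}), and the key observation is that $\dim C_{n+1}<\dim C_n$ whenever $\dim C_n\ge 1$ --- minimality of $r_n$ forbids the minimizer set from having full dimension. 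So after at most $d$ steps the iteration terminates at a single point, which is taken to be ${\rm CoM}_\Omega(K)$. Properties (1), (2), (3) are then automatic: the construction uses only the Hilbert metric and convexity, and $K$ enters only through $C_0={\rm ConvHull}_\Omega(K)$. In other words, the non-uniqueness you tried to eliminate with a strictly convex potential is instead absorbed by iterating the (possibly large) minimizer set and descending in dimension; any rescue of your variational approach would have to perform the same descent, at which point it becomes the paper's proof.
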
 

The following argument is due to Frankel~\cite[Section 12]{Fra1989} who constructed a ``holomorphic center of mass'' associated to a compact subset of a bounded convex domain in $\Cb^d$. Frankel's construction used the Kobayashi metric instead of the Hilbert metric and is equivariant under biholomorphisms instead of real projective transformations. An alternative approach to constructing a projective ``center of mass'' is given in~\cite[Lemma 4.2]{L2014}. 

\begin{proof} Fix some $(\Omega, K) \in \Kc_d$. We define a sequence of convex sets $C_0 \supset C_1 \supset  C_2 \dots$ as follows. First let 
\begin{align*}
C_0 =  {\rm ConvHull}_\Omega(K).
\end{align*}
Then supposing that $C_0, \dots, C_n$ have been selected, define
\begin{align*}
C_n(r) =C_n \cap  \bigcap_{c \in C_n} \{ p \in \Omega : H_\Omega(p, c) \leq r \}
\end{align*}
and
\begin{align*}
r_n = \min \{ r > 0 : C_n(r) \neq \emptyset\}.
\end{align*}
Then define $C_{n+1}:=C_n(r_n)$. Then $C_{n+1}$ is closed, convex, and $C_{n+1} \subset C_n$. Moreover, if $\dim C_n \geq 1$, then $\dim C_{n+1} < \dim C_n$ (otherwise $r_n$ was not minimal). So
\begin{align*}
{\rm CoM}_\Omega(K) : = C_{d}
\end{align*}
is a point in $\Omega$. It is clear from the construction that this definition satisfies conditions (1), (2), and (3). 
\end{proof}

\section{The faces of a convex domain}\label{sec:faces}

Given a properly convex domain $\Omega \subset \Pb(\Rb^d)$ and $x \in \overline{\Omega}$ let $F_\Omega(x)$ denote the \emph{open face} of $x$, that is 
\begin{align*}
F_\Omega(x) = \{ x\} \cup \left\{ y \in \overline{\Omega} : \text{ $\exists$ an open line segment in $\overline{\Omega}$ containing $x$ and $y$} \right\}.
\end{align*}
Notice that $F_\Omega(x) = \Omega$ when $x \in \Omega$. 

\begin{observation}\label{obs:faces} Suppose $\Omega \subset \Pb(\Rb^d)$ is a properly convex domain. 
\begin{enumerate}
\item $F_\Omega(x)$ is open in its span,
\item $y \in F_\Omega(x)$ if and only if $x \in F_\Omega(y)$ if and only if $F_\Omega(x) = F_\Omega(y)$,
\item if $y \in \partial F_\Omega(x)$, then $F_\Omega(y) \subset \partial F_\Omega(x)$,
\item if $x, y \in \overline{\Omega}$, $z \in (x,y)$, $p \in F_{\Omega}(x)$, and $q \in F_{\Omega}(y)$, then 
\begin{align*}
(p,q) \subset F_\Omega(z).
\end{align*}
In particular, $(p,q) \subset \Omega$ if and only if $(x,y) \subset \Omega$.
\end{enumerate}
\end{observation}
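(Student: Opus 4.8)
The plan is to pass to an affine chart $\mathbb{A}$ in which $C := \overline{\Omega}$ is a compact convex body, so that all four assertions become statements about the classical facial structure of $C$. For $x \in C$ I would let $G_x$ be the smallest closed face of $C$ containing $x$ — the intersection of all faces through $x$, which is again a face — and the central claim is the identification
\[
F_\Omega(x) = \relint(G_x).
\]
To prove $\supseteq$, note that if $y \in \relint(G_x)$ then $x$ and $y$ both lie in $\relint(G_x)$, so $[x,y]$ extends slightly past both endpoints inside $G_x \subseteq C$, giving an open segment of $C$ containing $x$ and $y$. For $\subseteq$, if an open segment $(a,b)\subseteq C$ contains $x$ and $y$, then $x$ lies in the relative interior of $[a,b]$, so the defining property of a face forces $[a,b]\subseteq G_x$; since $x\in\relint(G_x)$, propagating relative interiors along $[a,x]$ and $[x,b]$ shows $y\in\relint(G_x)$.

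Items (1)--(3) then follow quickly from this identification together with two standard facts about a convex set $D$: that $\relint(D)$ is open in, and spans the same subspace as, the span of $D$, and that two faces sharing a relative interior point coincide. For (1), $F_\Omega(x)=\relint(G_x)$ is by construction open in its span. For (2), the defining relation is visibly symmetric, and if $y\in\relint(G_x)$ then extending $[x,y]$ past $y$ inside $G_x$ and invoking the face property shows $x \in G_y$, whence $G_y=G_x$ and $F_\Omega(x)=F_\Omega(y)$; the converse is immediate since $x\in F_\Omega(x)$. For (3), if $y\in\partial F_\Omega(x)=G_x\setminus\relint(G_x)$ then $G_y\subsetneq G_x$, so $\relint(G_y)\subseteq G_x$ is disjoint from $\relint(G_x)$ and hence contained in $\partial F_\Omega(x)$.

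The main work is item (4). Since $z\in(x,y)$ lies in the relative interior of the segment $[x,y]\subseteq C$, the face property gives $[x,y]\subseteq G_z$, hence $x,y\in G_z$ and therefore $G_x,G_y\subseteq G_z$ by minimality; in particular $p\in F_\Omega(x)\subseteq G_z$ and $q\in F_\Omega(y)\subseteq G_z$. To upgrade ``$(p,q)\subseteq G_z$'' to ``$(p,q)\subseteq\relint(G_z)=F_\Omega(z)$'' I would invoke the standard formula for the relative interior of a convex hull: for convex sets $A,B$,
\[
\left\{ (1-s)a+s b : a\in\relint(A),\ b\in\relint(B),\ s\in(0,1)\right\}\subseteq\relint(\CH(A\cup B)).
\]
Applying this with $A=G_x$ and $B=G_y$ shows that both $z$ (coming from $x\in\relint(G_x)$, $y\in\relint(G_y)$) and every $w\in(p,q)$ lie in $\relint(\CH(G_x\cup G_y))$. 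Finally, the elementary observation that nested convex sets $\CH(G_x\cup G_y)\subseteq G_z$ sharing the relative interior point $z$ satisfy $\relint(\CH(G_x\cup G_y))\subseteq\relint(G_z)$ — proved by extending a segment from $z$ through any relative interior point of the smaller set and applying relative-interior propagation — yields $w\in\relint(G_z)=F_\Omega(z)$. The ``in particular'' statement then follows because an open segment in $C$ either lies entirely in $\relint(C)=\Omega$ or is disjoint from it, so $(x,y)\subseteq\Omega$ iff $z\in\Omega$ iff $F_\Omega(z)=\Omega$ iff $(p,q)\subseteq\Omega$.

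I expect item (4) to be the only genuine obstacle: once $F_\Omega(x)=\relint(G_x)$ is in hand, (1)--(3) are bookkeeping, whereas (4) requires the two relative-interior lemmas above and some care in tracking which face contains which point.
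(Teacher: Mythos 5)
Your proof is correct, but it is worth knowing that the paper offers no argument at all here: the authors dispose of the observation with the single line ``these are all simple consequences of convexity,'' the intended reading being that each item follows from direct segment-extension manipulations in an affine chart (e.g.\ for (4): writing $z=(1-t)x+ty$ and $w=(1-s)p+sq$, one extends $[z,w]$ slightly past both endpoints inside $\overline{\Omega}$ by expressing $z-\epsilon(w-z)$ as a convex combination of a point just beyond $x$ on the line through $x,p$ and a point just beyond $y$ on the line through $y,q$, both of which lie in $\overline{\Omega}$ by the defining open segments). You instead develop the classical facial-structure machinery: the minimal face $G_x$, the identification $F_\Omega(x)=\relint(G_x)$, and the relative-interior calculus, in particular Rockafellar's formula $\relint \CH(A\cup B) \supseteq \{(1-s)a+sb : a\in\relint A,\ b\in\relint B,\ s\in(0,1)\}$. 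This buys a cleaner conceptual picture — items (1)--(3) really do become bookkeeping once the identification is in place, and (4) reduces to two reusable lemmas — at the cost of importing three standard facts you use without proof: that $x\in\relint(G_x)$ (invoked in both directions of the identification and again in (4)), that faces of a closed convex set are closed, which you need silently in (3) to write $\partial F_\Omega(x)=\overline{\relint(G_x)}\setminus\relint(G_x)=G_x\setminus\relint(G_x)$, and the relint-of-join formula itself. All three are standard (Rockafellar, \S 6 and \S 18) and each admits a one-line proof by the same relative-interior propagation trick you already use, so none of this is a gap — but if you write this up you should either cite them or note that closedness of $G_x$ follows by taking $y\in\overline{G_x}$, a point $z\in\relint(G_x)$ slightly beyond $x$ opposite $y$, and applying the face property to the open segment $(z,y)\ni x$. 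The elementary route is shorter and self-contained; yours generalizes better and makes the partition of $\overline{\Omega}$ into open faces explicit.
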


\begin{proof} These are all simple consequences of convexity. \end{proof}

\subsection{The Hilbert metric and faces} We now observe several results which relate the faces of a convex domain with the Hilbert metric. 

\begin{proposition}\label{prop:dist_est_and_faces} Suppose $\Omega \subset \Pb(\Rb^d)$ is a properly convex domain, $x_n$ is a sequence in $\Omega$, and $x_n \rightarrow x \in \overline{\Omega}$. If $y_n$ is another sequence in $\Omega$, $y_n \rightarrow  y \in \overline{\Omega}$, and 
\begin{align*}
\liminf_{n \rightarrow \infty} H_\Omega(x_n,y_n) < + \infty,
\end{align*}
then $y \in F_\Omega(x)$ and 
\begin{align*}
H_{F_\Omega(x)}(x,y) \leq \liminf_{n \rightarrow \infty} H_\Omega(x_n,y_n).
\end{align*}
\end{proposition}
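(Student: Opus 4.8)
The plan is to pass to a subsequence that realizes the liminf and then analyze the limiting chord in a fixed affine chart. Set $L := \liminf_{n \to \infty} H_\Omega(x_n, y_n)$ and pass to a subsequence along which $H_\Omega(x_n, y_n) \to L$. If $x = y$ the conclusion is immediate, since then $y = x \in F_\Omega(x)$ and $H_{F_\Omega(x)}(x,x) = 0 \leq L$. So I would assume $x \neq y$, fix an affine chart with a Euclidean norm in which $\overline{\Omega}$ is a compact convex body, and for each $n$ let $a_n, b_n \in \partial \Omega$ be the endpoints of the chord $\overline{x_n y_n} \cap \overline{\Omega}$, ordered $a_n, x_n, y_n, b_n$, so that $H_\Omega(x_n,y_n) = \tfrac12 \log[a_n,x_n,y_n,b_n]$. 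By compactness of $\partial\Omega$, after a further subsequence I may assume $a_n \to a$ and $b_n \to b$ with $a,b \in \partial\Omega$.

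The step I expect to be the \textbf{main obstacle} is ruling out the degenerate limits $a = x$ or $b = y$: a priori the chord endpoints could collapse onto $x$ or $y$, in which case the cross-ratio formula blows up and the limiting chord degenerates. The plan here is to exploit the collinear ordering together with $x \neq y$. Since $y_n$ lies between $x_n$ and $b_n$ we have $\abs{x_n - b_n} \geq \abs{x_n - y_n}$, and symmetrically $\abs{y_n - a_n} \geq \abs{x_n - y_n}$; passing to the limit gives $\abs{x - b}\,\abs{y - a} \geq \abs{x-y}^2 > 0$. On the other hand the cross-ratios $[a_n,x_n,y_n,b_n] = e^{2 H_\Omega(x_n,y_n)}$ converge to the finite positive number $e^{2L}$, so the denominators $\abs{x_n - a_n}\,\abs{y_n - b_n}$ stay bounded away from $0$; this forces $\abs{x-a} > 0$ and $\abs{y-b} > 0$, i.e. $a \neq x$ and $b \neq y$. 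Consequently $a, x, y, b$ are four distinct points collinear in this order, the cross-ratio is continuous there, and $\tfrac12 \log[a,x,y,b] = \lim_n \tfrac12 \log[a_n,x_n,y_n,b_n] = L$.

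Finally I would identify the face and compute its chord. The open segment $(a,b)$ lies in $\overline{\Omega}$ by convexity and contains $x$, so for every $z \in (a,b)$ the segment $(a,b)$ itself is an open line segment in $\overline{\Omega}$ containing $x$ and $z$; hence $(a,b) \subseteq F_\Omega(x)$ and in particular $y \in F_\Omega(x)$. Moreover $a, b \notin F_\Omega(x)$: by Observation~\ref{obs:faces}(1) the set $F_\Omega(x)$ is open in its span and contained in $\overline{\Omega}$, so it cannot contain the endpoints $a,b$ of the full chord $\overline{xy} \cap \overline{\Omega}$ (a point of $F_\Omega(x)$ beyond $a$ or $b$ along $\overline{xy}$ would escape $\overline{\Omega}$). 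Thus the relative-boundary endpoints of $F_\Omega(x)$ along $\overline{xy}$ are exactly $a$ and $b$, and therefore $H_{F_\Omega(x)}(x,y) = \tfrac12 \log[a,x,y,b] = L$, which yields the claimed inequality (in fact with equality along this subsequence, since $H_{F_\Omega(x)}(x,y)$ does not depend on the chosen subsequence).
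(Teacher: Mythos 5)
Your first two paragraphs are sound: passing to a subsequence realizing the liminf, the observation that the numerators $\abs{x_n-b_n}\abs{y_n-a_n}$ are bounded below by $\abs{x_n-y_n}^2 \to \abs{x-y}^2>0$ while the cross-ratios converge to $e^{2L}$, hence $a\neq x$ and $b\neq y$, is correct, as is the conclusion that $(a,b)$ is an open segment in $\overline{\Omega}$ containing $x$ and $y$, so that $(a,b)\subseteq F_\Omega(x)$ and in particular $y\in F_\Omega(x)$. The genuine error is in the last step, where you silently identify $a=\lim a_n$ and $b=\lim b_n$ with the endpoints of the full chord $\overline{xy}\cap\overline{\Omega}$ (and hence with the relative-boundary points of $F_\Omega(x)$ along $\overline{xy}$). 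Nothing in the construction justifies this: chords behave only semicontinuously under limits of lines, so $[a,b]=\lim[a_n,b_n]$ can be a \emph{proper} subsegment of $\overline{xy}\cap\overline{\Omega}$, with $a$ or $b$ lying in the interior of the face. Concretely, let $\Omega$ be the open square $(-1,1)^2$ in an affine chart, let $s_n\to 0^+$, and put $x_n=(0,1-s_n/2)$, $y_n=(1/4,1-s_n/4)$, both on the line $t\mapsto (t,\,1-s_n/2+s_nt)$. Then $x=(0,1)$, $y=(1/4,1)$, and $F_\Omega(x)$ is the open top edge with endpoints $(\pm 1,1)$; but the chord endpoints are $a_n=(-1,1-3s_n/2)$ and $b_n=(1/2,1)$, so $b=(1/2,1)$ is an \emph{interior} point of $F_\Omega(x)$. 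Here $H_\Omega(x_n,y_n)=\tfrac12\log\tfrac52$ for every $n$, so $L=\tfrac12\log\tfrac52$, while $H_{F_\Omega(x)}(x,y)=\tfrac12\log\tfrac53<L$. Thus your claims that $a,b\notin F_\Omega(x)$, that they are the endpoints of $F_\Omega(x)$ along $\overline{xy}$, the resulting equality $H_{F_\Omega(x)}(x,y)=L$, and the parenthetical remark at the end are all false in general; only the inequality asserted in the proposition survives.

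The gap is easily repaired using only what you already established, so the overall strategy (which is surely what the paper means by ``a straightforward consequence of the definition,'' since it gives no details) does work. Since $F_\Omega(x)$ is convex, contained in $\overline{\Omega}$, and open in its span, its intersection with the line $\overline{xy}$ is an open segment $(a_F,b_F)$; your inclusion $(a,b)\subseteq F_\Omega(x)$ forces the ordering $a_F\preceq a\prec x\prec y\prec b\preceq b_F$ along $\overline{xy}$. The cross-ratio $[\alpha,x,y,\beta]$ is monotone in the outer pair: moving $\alpha$ away from $x$ and $\beta$ away from $y$ decreases it, since
\begin{align*}
[\alpha,x,y,\beta]=\left(1+\frac{\abs{x-y}}{\abs{x-\alpha}}\right)\left(1+\frac{\abs{x-y}}{\abs{\beta-y}}\right)\cdot\frac{\abs{\beta-y}\,\abs{y-\alpha}}{\abs{\beta-y}\,\abs{y-\alpha}}
\end{align*}
in affine coordinates along the line (each factor decreases as $\abs{x-\alpha}$, $\abs{\beta-y}$ increase). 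Therefore
\begin{align*}
H_{F_\Omega(x)}(x,y)=\tfrac12\log[a_F,x,y,b_F]\leq \tfrac12\log[a,x,y,b]=L,
\end{align*}
which is exactly the claimed bound. Replacing your final identification by this monotonicity comparison turns the proposal into a complete and correct proof.
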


\begin{proof} This is a straightforward consequence of the definition of the Hilbert metric. \end{proof}

Given a properly convex domain $\Omega \subset \Pb(\Rb^d)$, let $H_\Omega^{\Haus}$ denote the \emph{Hausdorff distance} on subsets of $\Omega$ induced by $H_\Omega$, that is: for subsets $A,B \subset \Omega$ define
\begin{align*}
H_\Omega^{\Haus}(A,B) = \max \left\{ \sup_{a \in A}\inf_{b \in B} H_\Omega(a,b), \, \sup_{b \in B}\inf_{a \in A} H_\Omega(a,b) \right\}.
\end{align*}

\begin{proposition}\label{prop:Crampons_dist_est_2} Suppose $\Omega \subset \Pb(\Rb^d)$ is a properly convex domain. Assume $p_1,p_2,q_1,q_2 \in \overline{\Omega}$, $F_\Omega(p_1) = F_\Omega(p_2)$, and $F_\Omega(q_1) = F_\Omega(q_2)$. If $(p_1,q_1) \cap \Omega \neq \emptyset$, then
\begin{align*}
H_{\Omega}^{\Haus}\Big((p_1,q_1), (p_2, q_2) \Big) \leq \max\{ H_{F_\Omega(p_1)}(p_1,p_2), H_{F_\Omega(q_1)}(q_1,q_2) \}.
\end{align*}
\end{proposition}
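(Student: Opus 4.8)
The plan is to bound the two one-sided Hausdorff distances separately. Since the right-hand side $R:=\max\{H_{F_\Omega(p_1)}(p_1,p_2),\,H_{F_\Omega(q_1)}(q_1,q_2)\}$ is symmetric under interchanging the pairs $(p_1,q_1)$ and $(p_2,q_2)$ (using $F_\Omega(p_1)=F_\Omega(p_2)$ and $F_\Omega(q_1)=F_\Omega(q_2)$), and since Observation~\ref{obs:faces}(4) applied to $x=p_1,y=q_1$ upgrades the hypothesis $(p_1,q_1)\cap\Omega\neq\emptyset$ to both $(p_1,q_1)\subset\Omega$ and $(p_2,q_2)\subset\Omega$, it suffices to show that every point of $(p_1,q_1)$ lies within $R$ of $(p_2,q_2)$. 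The reverse inequality then follows by running the identical argument with the two pairs exchanged.

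The observation that produces a \emph{maximum} rather than a sum is a quasiconvexity statement. The set $(p_2,q_2)=[p_2,q_2]\cap\Omega$ is a non-empty convex subset of $\Omega$ that is closed in $\Omega$, so Proposition~\ref{prop:convexity-of-nbd} shows that for every $r\geq 0$ the sublevel set $\{x\in\Omega:H_\Omega(x,(p_2,q_2))<r\}$ is convex. Hence $g(x):=H_\Omega\big(x,(p_2,q_2)\big)$ has convex sublevel sets, so its restriction to the segment $(p_1,q_1)$ is quasiconvex. A quasiconvex function on an open interval has supremum no larger than the larger of its two endpoint limits superior, which gives
\[ \sup_{x\in(p_1,q_1)} g(x)\ \leq\ \max\Big\{\textstyle\limsup_{x\to p_1}g(x),\ \limsup_{x\to q_1}g(x)\Big\}, \]
the limits taken along $(p_1,q_1)$. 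It then remains only to estimate the two endpoint limits, and the max in the display is exactly the max in the statement.

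To bound $\limsup_{x\to p_1}g(x)$ I would use an explicit affine transport in a fixed chart realizing $\overline\Omega$ as a compact convex body: writing $x=(1-t)p_1+tq_1$ and $y(t)=(1-t)p_2+tq_2\in(p_2,q_2)$, one has $g(x)\le H_\Omega(x,y(t))$, so it suffices to show $H_\Omega(x,y(t))\to H_{F_\Omega(p_1)}(p_1,p_2)$ as $t\to 0^+$. As $t\to 0$ the chord $\overline{x\,y(t)}$ degenerates to the line $\ell_0:=\overline{p_1p_2}\subset\Pb(\Spanset F_\Omega(p_1))$; its two boundary exit points lie in the compact set $\partial\Omega$ and, along any subsequence, converge to points of $\ell_0\cap\overline\Omega$. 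Invoking the face–span identity $\overline\Omega\cap\Pb(\Spanset F_\Omega(p_1))=\overline{F_\Omega(p_1)}$, this intersection is precisely the chord $[\alpha,\beta]$ of $\partial F_\Omega(p_1)$ through $p_1,p_2$, so the exit points converge to $\alpha$ and $\beta$ in the order $\alpha,p_1,p_2,\beta$. Continuity of the cross ratio then yields $H_\Omega(x,y(t))\to\tfrac12\log[\alpha,p_1,p_2,\beta]=H_{F_\Omega(p_1)}(p_1,p_2)$ (consistent with the lower bound of Proposition~\ref{prop:dist_est_and_faces}); the estimate at $q_1$ is identical. Combined with the quasiconvexity bound this gives $\sup_{(p_1,q_1)}g\le R$, and symmetry finishes the proof.

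The main obstacle will be this endpoint computation, and in particular justifying the face–span identity $\overline\Omega\cap\Pb(\Spanset F_\Omega(p_1))=\overline{F_\Omega(p_1)}$ that forces the degenerating chords to exit $\Omega$ at the face chord $[\alpha,\beta]$. It is worth stressing that a naive \emph{global} correspondence $x\mapsto y(t)$ does not by itself prove the theorem: a direct cross-ratio computation bounds $H_\Omega(x,y(t))$ by a quantity that can be as large as the \emph{sum} $H_{F_\Omega(p_1)}(p_1,p_2)+H_{F_\Omega(q_1)}(q_1,q_2)$ for intermediate $t$, and it is precisely the quasiconvexity argument of the second paragraph that upgrades the two (sharp) endpoint estimates to the maximum over the whole segment.
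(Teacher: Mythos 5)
Your first two paragraphs are sound, and they are in substance the paper's own argument: the paper likewise reduces to a one-sided bound by symmetry, gets $(p_2,q_2)\subset\Omega$ from Observation~\ref{obs:faces}(4), and uses Proposition~\ref{prop:convexity-of-nbd} to spread endpoint control over a whole segment --- it places $[p_{2,n},q_{2,n}]$ inside a convex neighborhood $\Nc_{R_n}((p_1,q_1))$ with $R_n\to R$, which is exactly your quasiconvexity step with the roles of the two segments exchanged. The genuine gap is in your third paragraph, the endpoint estimate, and it is an error rather than a missing detail. The face--span identity $\overline{\Omega}\cap\Pb(\Spanset F_\Omega(p_1))=\overline{F_\Omega(p_1)}$ is true, but it only shows that subsequential limits of the exit points lie \emph{in} the chord $[\alpha,\beta]$; it cannot force them to equal $\alpha,\beta$, and that is the direction that matters: exit points limiting to interior points of the chord make the limiting cross ratio \emph{larger}, which is why the lower bound (Proposition~\ref{prop:dist_est_and_faces}) is automatic while the upper bound is the real content.

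Here is a counterexample to both of your endpoint claims. Let $\Omega\subset\Rb^3\subset\Pb(\Rb^4)$ be the open tetrahedron with vertices $(\pm 1,0,1)$, $(0,\pm 1,-1)$, let $F$ be the open top edge, $F'$ the open bottom edge, and
\begin{align*}
p_1=\left(-\tfrac{1}{2},0,1\right),\qquad p_2=\left(\tfrac{1}{2},0,1\right),\qquad q_1=\left(0,\tfrac{7}{8},-1\right),\qquad q_2=\left(0,-\tfrac{1}{8},-1\right),
\end{align*}
so that $H_F(p_1,p_2)=\log 3$ and $H_{F'}(q_1,q_2)=\tfrac{1}{2}\log\tfrac{135}{7}$. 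The slice of $\overline\Omega$ at height $z=1-2t$ is the rectangle $\{\abs{x}\le 1-t,\ \abs{y}\le t\}$, and parametrizing the line through $x_t:=(1-t)p_1+tq_1$ and $y(t):=(1-t)p_2+tq_2$ affinely with $x_t$ at $0$ and $y(t)$ at $1$, its chord in $\overline\Omega$ is the parameter interval $[-\tfrac18,\tfrac32]$ for \emph{every} $t$: the left endpoint comes from the constraint $\abs{y}\le t$, i.e.\ from the geometry of $F'$, not of $F$. Hence the exit point on the $p_1$ side converges to $(-\tfrac58,0,1)$, an interior point of $(\alpha,p_1)$ rather than $\alpha=(-1,0,1)$, and $H_\Omega(x_t,y(t))=\tfrac12\log 27$ for all $t$, which does not converge to $H_F(p_1,p_2)$ and even exceeds $\max\{H_F(p_1,p_2),H_{F'}(q_1,q_2)\}$. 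No better matching rescues the inequality you need: using the simplex formula of Example~\ref{ex:basic_properties_of_simplices} one computes $H_\Omega\big(x_t,(p_2,q_2)\big)=\tfrac12\log\tfrac{135}{7}=H_{F'}(q_1,q_2)$ for all small $t$, so $\limsup_{x\to p_1}g(x)=H_{F'}(q_1,q_2)>H_F(p_1,p_2)$, and your asserted endpoint bound $\limsup_{x\to p_1}g\le H_{F_\Omega(p_1)}(p_1,p_2)$ is simply false --- the behaviour at the $p$-end is governed by the worse of the two faces. What is true, and is what the paper asserts (tersely, with the approximating sequences placed on the other segment), is $\limsup_{x\to p_1}g(x)\le R$ with $R$ the maximum; proving it requires a projective rather than affine transport, comparing $x_t$ with $y(t')$ where the ratio $t/t'$ is held at a suitable constant determined by \emph{both} face chords. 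With that corrected endpoint lemma your quasiconvexity argument would complete the proof; as written, it does not.
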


\begin{remark} Since $(p_1,q_1) \cap \Omega \neq \emptyset$, Observation~\ref{obs:faces} part (4)  implies that 
\begin{align*}
(p_1,q_1),(p_2,q_2) \subset \Omega. 
\end{align*}
\end{remark}

\begin{proof} Set $R:=\max\{ H_{F_\Omega(p_1)}(p_1,p_2), H_{F_\Omega(q_1)}(q_1,q_2) \}.$ Let $p_{2,n},q_{2,n} \in (p_2,q_2)$ be sequences such that $p_2=\lim_{n \to \infty} p_{2,n}$ and  $q_2=\lim_{n \to \infty} q_{2,n}$. Then there exists $R_n \rightarrow R$ such that 
\begin{align*}
p_{2,n},q_{2,n} \in \Nc_{R_n}((p_1,q_1)).
\end{align*}
Then Proposition \ref{prop:convexity-of-nbd} implies that $[p_{2,n}, q_{2,n}] \subset \Nc_{R_n}((p_1,q_1))$. Thus $(p_2,q_2) \subset \overline{\Nc_{R}((p_1,q_1))}$. By symmetry, $(p_1,q_1) \subset \overline{\Nc_{R}((p_2,q_2))}$. 
 \end{proof}

We will also use the following estimate. 

\begin{lemma}[{Crampon~\cite[Lemma 8.3]{C2009}}]\label{lem:Crampons_dist_est} Suppose that $\sigma_1, \sigma_2 : [0,T] \rightarrow \Omega$ are two unit speed projective line geodesics, then 
\begin{align*}
H_\Omega(\sigma_1(t), \sigma_2(t)) \leq H_\Omega(\sigma_1(0), \sigma_2(0))+H_\Omega(\sigma_1(T), \sigma_2(T))
\end{align*}
for $0 \leq t \leq T$. 
\end{lemma}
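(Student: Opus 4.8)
The plan is to reduce to a two-dimensional problem and then compare both geodesics to a single auxiliary ``diagonal'' geodesic, using that metric balls are convex (Proposition~\ref{prop:convexity-of-nbd}).

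First I would reduce to the planar case. Let $\ell_1,\ell_2$ be the projective lines containing $\sigma_1,\sigma_2$. If $\ell_1=\ell_2$ everything lies on one line and the claim is a direct one-dimensional cross-ratio computation, so assume $\ell_1\neq\ell_2$ and let $P=\Pb(W)$ be the projective plane they span. Set $\Omega'=\Omega\cap P$, a properly convex domain in $P$. The key observation is that for any two points $x,y\in\Omega'$ the line $\overline{xy}$ lies in $P$, so $\overline{xy}\cap\partial\Omega=\overline{xy}\cap\partial\Omega'$; hence $H_\Omega(x,y)=H_{\Omega'}(x,y)$. Thus every distance appearing in the statement is unchanged on passing to $\Omega'$, and $\sigma_1,\sigma_2$ remain unit-speed geodesics there. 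This lets me assume $d=3$ and work with two segments in a planar convex domain.

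Write $a_0=\sigma_1(0)$, $a_T=\sigma_1(T)$, $b_0=\sigma_2(0)$, $b_T=\sigma_2(T)$, and $r_0=H_\Omega(a_0,b_0)$, $r_T=H_\Omega(a_T,b_T)$. Let $\delta$ be the geodesic segment from $a_0$ to $b_T$. Since $a_0\in\delta$ and $H_\Omega(a_T,b_T)=r_T$ with $b_T\in\delta$, both endpoints of $\sigma_1$ lie in the closed set $\overline{\Nc_{r_T}(\delta)}$, which is convex by Proposition~\ref{prop:convexity-of-nbd}; hence $\sigma_1\subset\overline{\Nc_{r_T}(\delta)}$, i.e. $H_\Omega(\sigma_1(t),\delta)\leq r_T$ for all $t$. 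Symmetrically, using $b_T\in\delta$ and $H_\Omega(a_0,b_0)=r_0$, I get $\sigma_2\subset\overline{\Nc_{r_0}(\delta)}$, i.e. $H_\Omega(\sigma_2(t),\delta)\leq r_0$. Since $\sigma_1(t)$ and $\sigma_2(t)$ sit in the two triangles cut off by the diagonal $\delta$, the segment $[\sigma_1(t),\sigma_2(t)]$ meets $\delta$ in a point $c_t$; because the Hilbert distance is additive along a projective segment (Proposition~\ref{prop:hilbert_basic}), it suffices to prove the split estimate $H_\Omega(\sigma_1(t),c_t)\leq r_T$ and $H_\Omega(c_t,\sigma_2(t))\leq r_0$, which then add to $r_0+r_T$. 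A useful sanity check is that at $t=0$ one has $c_0=a_0$ and at $t=T$ one has $c_T=b_T$, and the desired split bounds hold with equality/slack at both ends.

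The main obstacle is exactly this split estimate. It does not follow formally from the two one-sided bounds above, since the nearest point of $\delta$ to $\sigma_1(t)$ need not be the crossing point $c_t$; controlling this ``drift'' of the projection along $\delta$ is where the absence of genuine non-positive curvature bites, and a purely generic convexity argument is not enough (one can cook up chords cutting the corner of a triangle that are longer than the opposite side). I would resolve it by exploiting the equal-time coupling and the unit-speed parametrization: the point $c_t$ is determined simultaneously by $\sigma_1(t)$ and $\sigma_2(t)$, and as $t$ runs over $[0,T]$ the crossing point $c_t$ sweeps monotonically along $\delta$ from $a_0$ to $b_T$. Using this monotonicity together with convexity of balls and a cross-ratio comparison for the nested corner-chords $[\sigma_1(t),c_t]$ in triangle $a_0a_Tb_T$ and $[c_t,\sigma_2(t)]$ in triangle $a_0b_0b_T$, I would establish the two split bounds. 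The bookkeeping of the quadrilateral configuration (choosing the correct diagonal, and the degenerate cases where the four points are not in convex position) is routine and handled separately. The genuinely delicate step, and the one I would spend the most care on, is turning the monotone sweep of $c_t$ into the sharp corner-chord inequalities $H_\Omega(\sigma_1(t),c_t)\leq r_T$ and $H_\Omega(c_t,\sigma_2(t))\leq r_0$.
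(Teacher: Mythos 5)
The paper does not actually prove this lemma; it imports it verbatim from Crampon~\cite[Lemma 8.3]{C2009}. So your argument has to stand on its own as a complete proof, and it does not: it has two genuine gaps. The first is that the opening reduction rests on a false premise. Two distinct projective lines span a projective plane only if they intersect; in $\Pb(\Rb^d)$ with $d \geq 4$ the lines $\ell_1$ and $\ell_2$ carrying $\sigma_1$ and $\sigma_2$ may be skew, in which case their span is a copy of $\Pb(\Rb^4)$, and your (otherwise correct) restriction identity $H_\Omega|_{\Pb(W) \times \Pb(W)} = H_{\Omega \cap \Pb(W)}$ only reduces the problem to projective dimension three, not two. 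This is fatal for the rest of the strategy, not a technicality: if $\ell_1$ and $\ell_2$ are skew, then for $0<t<T$ the segment $[\sigma_1(t),\sigma_2(t)]$ and the diagonal $\delta=[\sigma_1(0),\sigma_2(T)]$ are disjoint, because two transversals of a skew pair that met would span a plane containing two distinct points of each of $\ell_1$ and $\ell_2$, hence both lines. So the crossing point $c_t$, on which the entire second half of your argument is built, simply does not exist.

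The second gap is that, even in the coplanar case, the core of the lemma is never proved. The one-sided bounds $\sigma_1\subset\overline{\Nc_{r_T}(\delta)}$ and $\sigma_2\subset\overline{\Nc_{r_0}(\delta)}$ are correct applications of Proposition~\ref{prop:convexity-of-nbd}, but, as you acknowledge, they cannot be added because the nearest points of $\delta$ to $\sigma_1(t)$ and to $\sigma_2(t)$ differ. Your proposed repair --- the split estimates $H_\Omega(\sigma_1(t),c_t)\leq r_T$ and $H_\Omega(c_t,\sigma_2(t))\leq r_0$ --- is not a stepping stone toward the statement but a strengthening of it: by additivity of $H_\Omega$ along the segment $[\sigma_1(t),\sigma_2(t)]$ their sum \emph{is} the statement, and each one is itself an equal-time comparison of exactly the kind being proved. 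No proof of either is offered; the claimed monotone sweep of $t \mapsto c_t$ is asserted without argument, and no mechanism is indicated by which monotonicity plus ``a cross-ratio comparison'' would produce these inequalities. Moreover, the configurations you set aside as routine are not: if, say, $\sigma_2(0)$ lies inside the triangle with vertices $\sigma_1(0),\sigma_1(T),\sigma_2(T)$, then for $0<t<T$ the points $\sigma_1(t)$ and $\sigma_2(t)$ lie strictly on the same side of the line through $\delta$, so again $c_t$ does not exist and there is nothing to split through. In short, the preparatory reductions (where valid) and the neighborhood bounds are fine but peripheral; the inequality itself remains unproven.
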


\subsection{Dynamics of automorphisms} The next two results relate the faces of a convex domain with the behavior of automorphisms. 

In the next result we view $\PGL_d(\Rb)$ as a subset of $\Pb(\End(\Rb^d))$. 

\begin{proposition}\label{prop:dynamics_of_automorphisms}
Suppose $\Omega \subset \Pb(\Rb^d)$ is a properly convex domain, $p_0 \in \Omega$, and $g_n \in \Aut(\Omega)$ is a sequence such that 
\begin{enumerate}
\item $g_n (p_0) \rightarrow x \in \partial \Omega$, 
\item $g_n^{-1} (p_0) \rightarrow y \in \partial \Omega$, and
\item $g_n$ converges in $\Pb(\End(\Rb^d))$ to $T \in \Pb(\End(\Rb^d))$. 
\end{enumerate}
Then 
${\rm image}(T) \subset \Spanset F_\Omega(x)$, $\Pb(\ker T) \cap \Omega = \emptyset$, and $y \in \Pb(\ker T)$. 
\end{proposition}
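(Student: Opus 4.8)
\emph{The plan} is to fix lifts, read the image statement off the linear-algebraic limit via Proposition~\ref{prop:dist_est_and_faces}, introduce a subsequential limit of the inverses to locate the kernel, and use a supporting-hyperplane argument for the statements about $\ker T$. Throughout, choose lifts $\overline{g_n}\in\GL_d(\Rb)$ and scalars $c_n>0$ with $c_n\overline{g_n}\to\overline T$ in $\End(\Rb^d)$, where $\overline T\neq 0$ represents $T$; then for any $z=[\,\overline z\,]$ with $\overline T\overline z\neq 0$ one has $g_n(z)=[c_n\overline{g_n}\,\overline z]\to[\overline T\overline z]$.

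First I would prove ${\rm image}(\overline T)\subseteq\Spanset F_\Omega(x)$. For $z\in\Omega$ with $\overline T\overline z\neq 0$ the orbit $g_n(z)\to[\overline T\overline z]$, while $H_\Omega(g_n(z),g_n(p_0))=H_\Omega(z,p_0)<\infty$ and $g_n(p_0)\to x$; so Proposition~\ref{prop:dist_est_and_faces} forces $[\overline T\overline z]\in F_\Omega(x)$, i.e.\ $\overline T\overline z\in\Spanset F_\Omega(x)$. Since $\Omega$ is open, the vectors $\{\overline z: z\in\Omega,\ \overline T\overline z\neq 0\}$ span $\Rb^d$, so their $\overline T$-images span ${\rm image}(\overline T)$, giving the inclusion. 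Because $x\in\partial\Omega$, the face $F_\Omega(x)$ lies in a supporting hyperplane of $\Omega$; hence $\Spanset F_\Omega(x)$ is a proper subspace and $\operatorname{rank}(\overline T)<d$.

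Next I would introduce the inverse limit. Passing to a subsequence, choose $d_n>0$ with $d_n\overline{g_n}^{-1}\to\overline S\neq 0$, and apply the image statement to the sequence $g_n^{-1}$ (which satisfies $g_n^{-1}(p_0)\to y$ and $g_n(p_0)\to x$) to get ${\rm image}(\overline S)\subseteq\Spanset F_\Omega(y)$. Since $(c_n\overline{g_n})(d_n\overline{g_n}^{-1})=c_nd_n\,\mathrm{Id}$ converges to $\overline T\,\overline S$, the scalars $c_nd_n$ converge to some $\mu$ and $\overline T\,\overline S=\mu\,\mathrm{Id}$; as $\operatorname{rank}(\overline T)<d$ the product has rank $<d$, forcing $\mu=0$ and hence $\overline T\,\overline S=0$, i.e.\ ${\rm image}(\overline S)\subseteq\ker\overline T$. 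The symmetric computation gives $\overline S\,\overline T=0$. Granting the kernel statement $\Pb(\ker\overline T)\cap\Omega=\emptyset$ (applied to $\overline S$), we obtain $p_0\notin\Pb(\ker\overline S)$, so $g_n^{-1}(p_0)=[d_n\overline{g_n}^{-1}\overline{p_0}]\to[\overline S\,\overline{p_0}]$ and therefore $y=[\overline S\,\overline{p_0}]\in\Pb({\rm image}\,\overline S)\subseteq\Pb(\ker\overline T)$, which is the third conclusion.

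\emph{The main obstacle} is the kernel statement $\Pb(\ker\overline T)\cap\Omega=\emptyset$, which cannot be read off from bounded-orbit estimates alone, since two sequences converging to a common boundary point may stay at bounded Hilbert distance; it genuinely uses the convexity of $\Omega$ rather than only the properness of $H_\Omega$ from Proposition~\ref{prop:hilbert_basic}. I would prove the sharper inclusion $\ker\overline T\subseteq\Spanset F_\Omega(y)$, whence $\Pb(\ker\overline T)\cap\Omega=\emptyset$ because the span of the proper face $F_\Omega(y)$ lies in a supporting hyperplane of $\Omega$. For the inclusion I would argue by contradiction: if $z\in\Pb(\ker\overline T)\cap\Omega$, then using that $z$ is interior I pick $w_1,w_2\in\Omega\setminus\Pb(\ker\overline T)$ on opposite sides of $z$ along a line through $z$; the relation $\overline T\overline z=0$ forces $[\overline T\overline{w_1}]=[\overline T\overline{w_2}]=:\xi\in F_\Omega(x)$, so both $g_n(w_i)\to\xi$ and the segment $[g_n(w_1),g_n(w_2)]\subseteq\overline\Omega$ collapses to $\xi$, forcing $g_n(z)\to\xi$. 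Varying the line through $z$ then makes $[\overline T\overline w]=\xi$ for all admissible $w$, so $\operatorname{rank}(\overline T)=1$; this degenerate case I would rule out using the inverse limit, where $\ker\overline T={\rm image}(\overline S)\subseteq\Spanset F_\Omega(y)$ already sits in a supporting hyperplane at $y$. Isolating and dispatching this rank-one possibility is the delicate point I expect to require the most care.
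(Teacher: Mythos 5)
Your first conclusion and your reduction of the third conclusion to the second are correct: the image statement via Proposition~\ref{prop:dist_est_and_faces} is exactly the paper's argument, and your product trick ($c_nd_n\,\mathrm{Id}\to\overline{T}\,\overline{S}$, with ${\rm rank}\,\overline{T}<d$ forcing $\overline{T}\,\overline{S}=0$, hence ${\rm image}(\overline{S})\subseteq\ker\overline{T}$) is a clean algebraic substitute for the paper's singular-value computation. But the kernel statement $\Pb(\ker T)\cap\Omega=\emptyset$ --- which you correctly identify as the crux --- is where the proposal breaks. The ``sharper inclusion'' $\ker\overline{T}\subseteq\Spanset F_\Omega(y)$ is \emph{false}: take the triangle $\Omega=\{[x_1:x_2:x_3]:x_i>0\}$, $g_n=[{\rm diag}(e^n,1,e^{-n})]$, $p_0=[1:1:1]$. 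Then $x=[1:0:0]$, $y=[0:0:1]$, and $\overline{T}={\rm diag}(1,0,0)$, so $\ker\overline{T}=\Spanset\{e_2,e_3\}$ is two-dimensional while $F_\Omega(y)=\{y\}$ is a vertex with one-dimensional span. The same example kills your rank-one fallback: here $\overline{S}={\rm diag}(0,0,1)$, so ${\rm image}(\overline{S})=\Rb e_3\subsetneq\ker\overline{T}$; the equality $\ker\overline{T}={\rm image}(\overline{S})$ you invoke is false in general (only the inclusion holds, which is all your product trick gives). Note also that rank-one limits are not a degenerate case to be ruled out --- they are generic (for strictly convex $\Omega$ every such limit has rank one) --- and your contradiction argument only proves ``kernel meets $\Omega$ implies ${\rm rank}\,\overline{T}=1$''; you have no argument that rank one together with $\Pb(\ker\overline{T})\cap\Omega\neq\emptyset$ is impossible. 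Finally, the route is implicitly circular: to get $y=[\overline{S}\,\overline{p_0}]$ you must first know $\Pb(\ker\overline{S})\cap\Omega=\emptyset$, i.e.\ the very statement under discussion applied to $g_n^{-1}$.

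What is genuinely needed, and what the paper supplies, is a quantitative dynamical argument independent of faces at $y$. The paper writes $\overline{g}_n=k_{n,1}\,{\rm diag}(\sigma_{1,n},\dots,\sigma_{d,n})\,k_{n,2}$ with $\sigma_{1,n}=1$ via the singular value decomposition, and, assuming $[v]\in\Pb(\ker T)\cap\Omega$, pushes the short segments $\{[v_n+s\,k_{n,2}^{-1}e_1]:\abs{s}<\epsilon\}\subset\Omega$ (where $v_n=k_{n,2}^{-1}k_2v$) forward under $g_n$ after rescaling $s=t\norm{\overline{g}_nv_n}$: since $\norm{\overline{g}_nv_n}$ decays like the vanishing singular values while $\overline{g}_nk_{n,2}^{-1}e_1=k_{n,1}e_1$ has unit norm, the limits $[w+tk_1e_1]$ lie in $\overline{\Omega}$ for \emph{every} $t\in\Rb$, producing a full affine line in $\overline{\Omega}$ and contradicting proper convexity. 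This works for any rank of $\overline{T}$ and is the missing ingredient; a face-containment statement at $y$ cannot replace it.
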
 

\begin{proof} For $v \in \Rb^d$ let $\norm{v}$ be the standard Euclidean norm of $v$ and for $S \in \End(\Rb^d)$ let $\norm{S}$ denote the associated operator norm. Also let $e_1,\dots, e_d$ denote the standard basis of $\Rb^d$. 

Notice that 
\begin{align*}
T(p) = \lim_{n \rightarrow \infty} g_n(p)
\end{align*}
for all $p \notin \Pb(\ker T)$. 

We can pick a lift $\overline{g}_n \in \GL_d(\Rb)$ of each $g_n$ with $\norm{\overline{g}_n}=1$ such that $\overline{g}_n \rightarrow \overline{T}$ in $\End(\Rb^d)$ and $\overline{T}$ is a lift of $T$. 

\medskip

\noindent \textbf{Claim 1:} $\Pb(\ker T) \cap \Omega = \emptyset$. 

\medskip

\noindent \emph{Proof of Claim 1:} Using the singular value decomposition, we can find $k_{n,1}, k_{n,2} \in {\rm O}(d)$ and $1=\sigma_{1,n} \geq \dots \geq \sigma_{d,n}>0$ such that 
\begin{align*}
\overline{g}_n = k_{n,1} \begin{pmatrix} \sigma_{1,n} & &  \\ & \ddots & \\ & & \sigma_{d,n} \end{pmatrix} k_{n,2}.
\end{align*}
By passing to a subsequence we can suppose that $k_{n,1} \rightarrow k_1$, $k_{n,2} \rightarrow k_2$, and 
\begin{align*}
\chi_j : = \lim_{n \rightarrow \infty} \sigma_{j,n} \in [0,1]
\end{align*}
exists for every $1 \leq j \leq d$. Then 
\begin{align*}
\overline{T} = k_1  \begin{pmatrix} 1 & &  &  \\ & \chi_2 & & \\ & & \ddots & \\ & & & \chi_d \end{pmatrix} k_2.
\end{align*}
Let 
\begin{align}
\label{eq:defn_of_m}
m := \max\left\{ j : \chi_j >0\right\}.
\end{align}
Then $\ker T = k_2^{-1} \Spanset\{ e_{m+1}, \dots, e_d\}$. 

Suppose for a contradiction that there exists $[v] \in \Pb(\ker T) \cap \Omega$. Let 
\begin{align*}
v_n := k_{n,2}^{-1} k_2v \in k_{n,2}^{-1} \Spanset\{ e_{m+1}, \dots, e_d\}.
\end{align*}
Since $\Omega$ is open and $v_n \rightarrow v$, by passing to a tail we can assume that there exists some $\epsilon > 0$ such that 
\begin{align*}
\Big\{ \left[v_n+sk_{n,2}^{-1}e_1\right] : \abs{s} <\epsilon\Big\}\subset \Omega
\end{align*}
for all $n \geq 0$. By passing to a subsequence we can suppose that 
\begin{align*}
w:=\lim_{n \rightarrow \infty} \frac{1}{\norm{\overline{g}_n v_n}} \overline{g}_n v_n \in \Rb^d
\end{align*}
exists. Now fix $t \in \Rb$ and let $t_n :=  \norm{\overline{g}_n v_n} t$. Since $\norm{\overline{g}_n v_n} \leq \sigma_{m+1,n}\norm{v_n}$ and 
 \begin{align*}
 \lim_{n \rightarrow \infty} \sigma_{m+1,n} =0,
 \end{align*}
  for $n$ sufficiently large we have $\abs{t_n} < \epsilon$. Then 
 \begin{align*}
 \left[ w + t k_1 e_1\right] & = \lim_{n \rightarrow \infty} \left[ \frac{1}{\norm{\overline{g}_n v_n}} \left( \overline{g}_n v_n + t_n k_{n,1} e_1 \right) \right] \\
 & = \lim_{n \rightarrow \infty} \left[ \frac{1}{\norm{\overline{g}_n v_n}} \left( \overline{g}_n v_n + t_n  \overline{g}_n k_{n,2}^{-1} e_1 \right) \right] \\
 &  = \lim_{n \rightarrow \infty} g_n \left[v_n+t_nk_{n,2}^{-1}e_1\right] \in \overline{\Omega}.
 \end{align*}
 Since $t$ is arbitrary, we see that 
 \begin{align*}
 \{ [w+t k_1 e_1] : t \in \Rb\} \subset \overline{\Omega}
 \end{align*}
 which contradicts the fact that $\Omega$ is properly convex. So $\Pb(\ker T) \cap\Omega = \emptyset$. 
 
 \medskip

\noindent \textbf{Claim 2:} $T(\Omega) \subset F_\Omega(x)$. In particular, 
\begin{align*}
{\rm image}(T) \subset \Spanset F_\Omega(x).
\end{align*}

\noindent \emph{Proof of Claim 2:} Since $\Pb(\ker T) \cap \Omega = \emptyset$, 
\begin{align*}
T(p) = \lim_{n \rightarrow \infty} g_n(p)
\end{align*}
for all $p \in \Omega$. Since $g_n(p_0) \rightarrow x$ and 
\begin{align*}
H_\Omega(g_n(p), g_n(p_0)) = H_\Omega(p,p_0),
\end{align*}
Proposition~\ref{prop:dist_est_and_faces} implies that  $T(\Omega) \subset F_\Omega(x)$. 

 \medskip

\noindent \textbf{Claim 3:}  $y \in \Pb(\ker T)$. 

\medskip

\noindent \emph{Proof of Claim 3:} Notice that 
\begin{align*}
\overline{h}_n := k_{n,2}^{-1} \begin{pmatrix} \sigma_{1,n}^{-1} & &  \\ & \ddots & \\ & & \sigma_{d,n}^{-1} \end{pmatrix} k_{n,1}^{-1}
\end{align*}
is a lift of $g_n^{-1}$. Since  $1=\sigma_{1,n} \geq \dots \geq \sigma_{d,n}>0$, we can pass to a subsequence and assume that $\sigma_{d,n} \overline{h}_n$ converges in $\End(\Rb^d)$ to some non-zero $S \in \End(\Rb^d)$. Then $g_n^{-1}$ converges in $\Pb(\End(\Rb^d))$ to $[S] \in \Pb(\End(\Rb^d))$. Claim 1 applied to $g_n^{-1}$ implies that $\Pb(\ker S) \cap \Omega = \emptyset$. So 
\begin{align*}
S(p_0) = \lim_{n \rightarrow \infty} g_n^{-1}(p_0) = y.
\end{align*}
Further, Equation~\eqref{eq:defn_of_m} implies that
\begin{align*}
{\rm image}(S) \subset k_2^{-1} \Spanset\{ e_{m+1}, \dots, e_d\} = \ker T.
\end{align*}
So $y \in \Pb(\ker T )$. 
\end{proof}

Given a group $G \leq \PGL_d(\Rb)$ define $\overline{G}^{\End}$ to be the closure of the set 
\begin{align*}
\{ g \in \GL_d(\Rb) : [g] \in G\}
\end{align*}
in $\End(\Rb^d)$. 

\begin{proposition}\label{prop:projections_onto_faces}
Suppose $\Omega \subset \Pb(\Rb^d)$ is a properly convex domain, $\Cc \subset \Omega$ is a non-empty closed convex subset, and $G \leq \Stab_\Omega(\Cc)$ acts co-compactly on $\Cc$. If $x \in \partiali \Cc$, then there exists $T \in \overline{G}^{\End}$ such that 
\begin{enumerate}
\item $\Pb(\ker T) \cap \Omega = \emptyset$,
\item $T(\Omega) = F_\Omega(x)$, and
\item $T(\Cc) = F_\Omega(x) \cap \partiali \Cc$.
\end{enumerate}
\end{proposition}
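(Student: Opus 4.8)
The plan is to take a point $x \in \partiali\Cc$ and produce the operator $T$ by a limiting argument, using co-compactness to control the orbit and then invoking Proposition~\ref{prop:dynamics_of_automorphisms} to extract the desired face structure. Here is how I would carry it out.

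\textbf{Setting up the limit.} Fix a basepoint $p_0 \in \Cc$. Since $x \in \partiali\Cc = \partial\Cc \setminus \Cc \subset \partial\Omega$, I can choose a sequence $c_n \in \Cc$ with $c_n \to x$. Because $G$ acts co-compactly on $\Cc$, there is a compact $K \subset \Cc$ with $GK = \Cc$, so I may write $c_n = g_n(k_n)$ with $g_n \in G$ and $k_n \in K$; passing to a subsequence, $k_n \to k_\infty \in K \subset \Cc \subset \Omega$. Then $g_n(k_n) \to x$, and since $H_\Omega(g_n(k_n), g_n(p_0)) = H_\Omega(k_n, p_0)$ stays bounded (as $k_n$ lives in a compact set), Proposition~\ref{prop:dist_est_and_faces} forces $g_n(p_0) \to x$ as well. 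Passing to further subsequences I arrange that $g_n \to T$ in $\Pb(\End(\Rb^d))$ and $g_n^{-1}(p_0) \to y \in \partial\Omega$. Now Proposition~\ref{prop:dynamics_of_automorphisms} applies directly and yields, for a lift also called $T$, that $\Pb(\ker T)\cap\Omega = \emptyset$ (giving item (1)), that $\operatorname{image}(T) \subset \Spanset F_\Omega(x)$, and that $T(\Omega) \subset F_\Omega(x)$. Normalizing the lift so that $\overline{g}_n \to T$ in $\End(\Rb^d)$ places $T \in \overline{G}^{\End}$.

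\textbf{Upgrading to equalities.} The content of items (2) and (3) is that the containments are equalities. For item (2), I expect $T(\Omega) = F_\Omega(x)$: since $\Pb(\ker T) \cap \Omega = \emptyset$, the map $T$ restricted to $\Omega$ is a well-defined projective map into $\Spanset F_\Omega(x)$, and the image $T(\Omega)$ is an open convex subset of $F_\Omega(x)$ containing $x = T(k_\infty)$; I would argue it is all of $F_\Omega(x)$ by showing $T(\Omega)$ is open in $\Pb(\Spanset F_\Omega(x))$ and closed in $F_\Omega(x)$, using that $F_\Omega(x)$ is open in its span (Observation~\ref{obs:faces}(1)) and connectedness, or alternatively by exhibiting a point of $\Omega$ mapping to a given point of $F_\Omega(x)$ via the explicit singular-value description of $T$. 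For item (3), I would use that $T(\Cc) \subset F_\Omega(x)$ is compact-image-controlled: every point of $\Cc$ is a $G$-translate of $K$, and the limit operator $T$ collapses the bounded directions, so $T(\Cc)$ should land in $\partiali\Cc$; conversely any face-point of $\partiali\Cc$ near $x$ arises as such a limit.

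\textbf{The main obstacle.} The delicate part is item (3), the identity $T(\Cc) = F_\Omega(x) \cap \partiali\Cc$, because it requires matching the face of $x$ inside $\Omega$ with the ideal boundary structure of the subset $\Cc$. The containment $T(\Cc) \subset F_\Omega(x) \cap \partiali\Cc$ needs me to verify that $T$ sends $\Cc$ into $\partial\Cc$ and not into the interior — this should follow because $x \in \partiali\Cc$ means the whole face $F_\Omega(x)$ is ideal with respect to $\Cc$, so by Observation~\ref{obs:faces}(2) and the $G$-invariance of $\Cc$ one gets $T(\Cc) \subset \partiali\Cc$. The reverse containment is harder: given $z \in F_\Omega(x) \cap \partiali\Cc$, I must produce $c \in \Cc$ with $T(c) = z$. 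Here I would use surjectivity of $T: \Omega \to F_\Omega(x)$ from item (2) to find a preimage in $\Omega$, then use co-compactness and a second limiting argument (approximating $z$ by boundary points of $g_n$-translates of $K$) to show the preimage can be taken in $\Cc$. Controlling these two coupled limits simultaneously — the one defining $T$ and the one producing preimages in $\Cc$ — is where the real care is needed, and I would likely isolate it as the technical heart of the proof.
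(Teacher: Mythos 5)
Your setup---producing $g_n$ from co-compactness of the action on $\Cc$, normalizing lifts so that $\overline{g}_n \to T$ in $\End(\Rb^d)$, and invoking Proposition~\ref{prop:dynamics_of_automorphisms} to get $\Pb(\ker T)\cap\Omega=\emptyset$ and $T(\Omega)\subset F_\Omega(x)$---is exactly how the paper's proof begins (one small slip: bounded Hilbert distance does not force $g_n(p_0)\to x$; Proposition~\ref{prop:dist_est_and_faces} only puts subsequential limits of $g_n(p_0)$ in $F_\Omega(x)$, which is all you need, since such a limit has the same face as $x$). The genuine gap is in the reverse inclusions of (2) and (3), which is where all the content lies. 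Your open-and-closed argument for (2) does not get off the ground: openness of $T(\Omega)$ in $\Pb(\Spanset F_\Omega(x))$ presupposes ${\rm image}(T)=\Spanset F_\Omega(x)$, whereas at this stage you only know the inclusion ${\rm image}(T)\subset \Spanset F_\Omega(x)$; and closedness of $T(\Omega)$ in $F_\Omega(x)$ is, by that very connectedness argument, equivalent to the surjectivity you are trying to prove---you offer no mechanism for it. The singular-value alternative has the same defect: it identifies the image subspace of $T$ but says nothing about whether a given point of $F_\Omega(x)$ has a preimage inside $\Omega$ rather than merely in $\Pb(\Rb^d)\setminus\Pb(\ker T)$. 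For (3) you explicitly defer the ``coupled limits'' as the technical heart, so that step is not proved either.

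What closes the gap in the paper is a single pullback trick whose key ingredient you never invoke: Proposition~\ref{prop:Crampons_dist_est_2}. Choose $g_n$ tracking $p_n \in [p_0,x)$, i.e.\ $H_\Omega(g_np_0,p_n)\le R$. Given $y \in F_\Omega(x)$, since $F_\Omega(y)=F_\Omega(x)$ the segments $(p_0,x)$ and $(p_0,y)$ are at finite Hausdorff distance by Proposition~\ref{prop:Crampons_dist_est_2}, so one can choose $y_n\in[p_0,y)$ with $\sup_n H_\Omega(y_n,p_n)<\infty$; then
\begin{align*}
H_\Omega\bigl(g_n^{-1}y_n,p_0\bigr)\le H_\Omega(y_n,p_n)+H_\Omega(p_n,g_np_0)
\end{align*}
is bounded, so by properness of $H_\Omega$ some subsequence $g_{n_j}^{-1}y_{n_j}\to q\in\Omega$, and then $T(q)=\lim_j g_{n_j}\bigl(g_{n_j}^{-1}y_{n_j}\bigr)=y$, using that $g_n\to T$ locally uniformly on $\Omega$. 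This proves (2). The same argument proves (3) with no extra coupling of limits: if $z\in F_\Omega(x)\cap\partiali\Cc$, then $[p_0,z)\subset\Cc$ because $\Cc$ is convex and closed in $\Omega$, so the pulled-back points $g_n^{-1}y_n$ lie in $\Cc$ by $G$-invariance and the limit $q$ lies in $\Cc$ because $\Cc$ is closed; conversely $T(\Cc)\subset F_\Omega(x)\cap\partiali\Cc$ since for $p\in\Cc$ one has $T(p)=\lim g_n(p)\in\overline{\Cc}\cap\partial\Omega\subset\partiali\Cc$. So the ``technical heart'' you anticipated is exactly Proposition~\ref{prop:Crampons_dist_est_2} plus properness of the Hilbert metric, and without it your treatment of (2) and (3) is incomplete.
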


\begin{proof} Fix some $p_0 \in \Cc$ and a sequence $p_n \in [p_0, x)$ with $p_n \rightarrow x$. Since $G$ acts co-compactly on $\Cc$, there exists $R > 0$ and a sequence $g_n \in G$ such that 
\begin{align*}
H_\Omega(g_n p_0, p_n) \leq R
\end{align*}
for all $n \geq 0$. 

As before, for $S \in \End(\Rb^d)$ let $\norm{S}$ be the operator norm associated  to the standard Euclidean norm. Let $\overline{g}_n \in \GL_d(\Rb)$ be a lift of $g_n$ with $\norm{\overline{g}_n}=1$. By passing to a subsequence we can suppose that $\overline{g}_n \rightarrow T$ in $\End(\Rb^d)$. Proposition~\ref{prop:dynamics_of_automorphisms} implies that $\Pb(\ker T) \cap \Omega = \emptyset$ and $T(\Omega) \subset F_\Omega(x)$. Then 
\begin{align*}
T(p) = \lim_{n \rightarrow \infty} g_n(p)
\end{align*}
for all $p \in \Omega$.

\medskip

\noindent \textbf{Claim 1:} $T(\Omega) = F_\Omega(x)$. 

\medskip

\noindent \emph{Proof of Claim 1:} We only need to show that $F_\Omega(x) \subset T(\Omega)$. So fix $y \in F_\Omega(x)$. Then we can pick $y_n \in [p_0, y)$ such that 
\begin{align*}
\sup_{n \geq 0} H_\Omega(y_n, p_n) <\infty.
\end{align*}
Thus 
\begin{align*}
\sup_{n \geq 0} H_\Omega(g_n^{-1} y_n, p_0) <\infty.
\end{align*}
So there exists $n_j \rightarrow\infty$ so that the limit 
\begin{align*}
q:=\lim_{j \rightarrow \infty} g_{n_j}^{-1} y_{n_j}
\end{align*}
exists in $\Omega$. Then 
\begin{align*}
T(q) = \lim_{n \rightarrow \infty} g_n(q) = \lim_{j \rightarrow \infty} g_{n_j} g_{n_j}^{-1} y_{n_j} = \lim_{j \rightarrow \infty}y_{n_j} = y.
\end{align*}
Hence $F_\Omega(x) \subset T(\Omega)$. 

\medskip

\noindent \textbf{Claim 2:} $T(\Cc) = F_\Omega(x) \cap \partiali \Cc$.

\medskip

\noindent \emph{Proof of Claim 2:} This is almost identical to the proof of Claim 1.  

\end{proof}

\section{Abelian convex co-compact actions}\label{sec:abelian_cc_action}

In this section we show that every naive convex co-compact action of an Abelian group comes from ``fattening'' a properly embedded simplex. 

\begin{theorem}\label{thm:abelian_cc_actions} Suppose $\Omega \subset \Pb(\Rb^d)$ is a properly convex domain, $\Cc \subset \Omega$ is a non-empty closed convex subset, and $G \leq \Stab_\Omega (\Cc)$. If $G$ is Abelian and acts co-compactly on $\Cc$, then there exists a properly embedded simplex $S \subset \Cc$ where 
\begin{enumerate}
\item $G \leq \Stab_\Omega (S)$, 
\item $G$ acts co-compactly on $S$, and 
\item $G$ fixes each vertex of $S$. 
\end{enumerate}
\end{theorem}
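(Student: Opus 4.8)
The plan is to construct $S$ as the open simplex spanned by $r+1$ common eigenlines of $G$ lying in the ideal boundary $\partiali\Cc$, where $r$ is the rank of $G$. By Proposition~\ref{prop:hilbert_basic} the metric $H_\Omega$ is proper and $\Aut(\Omega)$-invariant, so $G$ acts properly and co-compactly on the proper geodesic space $(\Cc,H_\Omega)$. The Milnor--\v{S}varc lemma then shows $G$ is finitely generated and quasi-isometric to $(\Cc,H_\Omega)$, and since $G$ is Abelian it contains a finite-index subgroup $G_0\cong\Zb^r$. I would build $S$ for $G_0$ and recover the statement for $G$ at the end. Because $G_0$ must act co-compactly on $S$ while $(S,H_S)$ is quasi-isometric to $\Rb^{\dim S}$ by Example~\ref{ex:basic_properties_of_simplices}, the target dimension is forced to be $\dim S=r$, i.e. $S$ should have $k=r+1$ vertices.

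The candidate vertices are extreme points of $\overline{\Omega}$ lying in $\partiali\Cc$, and the key point is that every such point is a common eigenline of $G$. Indeed, for $x\in\partiali\Cc$ Proposition~\ref{prop:projections_onto_faces} produces $T\in\overline{G}^{\End}$ with $T(\Omega)=F_\Omega(x)$ and $\Pb(\ker T)\cap\Omega=\emptyset$; writing $T=\lim g_n$ with $g_n\in G$ and using that $G$ is Abelian gives $[g][T]=[T][g]$ in $\Pb(\End(\Rb^d))$ for every $g\in G$, so $g$ preserves both $\Spanset F_\Omega(x)$ and $\ker T$. When $x$ is extreme, $F_\Omega(x)=\{x\}$, so $x$ is fixed by every $g\in G$ with a real eigenvalue $\mu_x(g)$, necessarily positive since the eigenvector can be taken in the convex cone over $\Omega$. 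I would exhibit such points as attracting and repelling fixed points of elements of $G_0$: for $g$ with $\tau_\Omega(g)>0$ the limits $\lim_n g^{\pm n}p_0$ lie in $\partiali\Cc$, are extreme in $\overline{\Omega}$, and realize $\lambda_1(g)$ and $\lambda_d(g)$ respectively.

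To fix the count, let $\chi_x\in\Hom(G_0,\Rb)$ be the character $\chi_x(g)=\log\mu_x(g)$. The function $g\mapsto\tau_\Omega(g)=\tfrac12\log\tfrac{\lambda_1}{\lambda_d}(g)$ (Proposition~\ref{prop:min_trans_compute}) agrees with the stable translation length $\lim_n\tfrac1n H_\Omega(p_0,g^n p_0)$, which by Milnor--\v{S}varc and co-compactness is comparable to the word norm on $G_0\cong\Zb^r$ and hence behaves like a genuine norm on $\Rb^r$. This non-degeneracy forces the characters $\chi_x$ coming from extreme points to span $\Hom(G_0,\Rb)\cong\Rb^r$, so I can select common eigenlines $[v_1],\dots,[v_{r+1}]$ whose characters affinely span $\Hom(G_0,\Rb)$, equivalently whose differences $\chi_{v_i}-\chi_{v_j}$ span $\Rb^r$. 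Choosing $g\in G_0$ with pairwise distinct values $\mu_{v_i}(g)$ makes $v_1,\dots,v_{r+1}$ linearly independent, so $S:=\{[\sum_i t_i v_i]:t_i>0\}$ is an honest $r$-simplex.

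For the verification: since each $v_i\in\partiali\Cc\subset\overline{\Cc}$ and $\Cc$ is closed and convex, $\CH_{\Cc}(\{v_1,\dots,v_{r+1}\})\subset\overline{\Cc}$ and its relative interior $S$ lies in $\Cc$; positivity of the $\mu_i(g)$ shows $G_0$ preserves the positive cone and hence $S$, fixing each vertex, and by Example~\ref{ex:basic_properties_of_simplices} co-compactness of the $G_0$-action on $S$ is equivalent to the rank-$r$ character lattice being discrete and co-compact in $\Rb^r$, which follows from discreteness of $G_0$ and the norm comparison above. Passing from $G_0$ to $G$: as $G$ is Abelian it normalizes $G_0$ and permutes the finite vertex set, and any element of $G_0$ with distinct eigenvalues on the $v_i$ forces this permutation to be trivial, so $G$ fixes each vertex and preserves $S$, acting co-compactly since $G_0$ does. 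I expect the main obstacle to be twofold and to live in the third step: first, proving that the eigenvalue characters attain full rank $r$ (so that $k=r+1$ exactly, rather than the co-compact $\Zb^r$-action limiting onto a lower-dimensional boundary configuration); and second, proving that the chosen vertices assemble into a \emph{properly embedded} simplex, i.e. that every proper face $\CH_{\Cc}$ of a subset of the $v_i$ lies in $\partiali\Cc$ rather than meeting $\Omega$. Both require combining the eigenvalue estimate of Proposition~\ref{prop:min_trans_compute}, the extremality of the vertices in $\overline{\Omega}$, and the quasi-isometry $G_0\simeq\Cc$.
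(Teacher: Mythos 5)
There are two genuine gaps, and each one kills a load-bearing step. First, discreteness: your opening move applies the Milnor--\v{S}varc lemma to conclude that $G$ is finitely generated and virtually $\Zb^r$, but that lemma requires the action on $(\Cc,H_\Omega)$ to be metrically proper, i.e.\ $G$ discrete in $\Aut(\Omega)$. Theorem~\ref{thm:abelian_cc_actions} deliberately does \emph{not} assume discreteness (see the remark immediately after it), and this generality is not decorative: the paper's proof is an induction on $\dim \Omega + \dim \Cc$ in which the theorem is re-applied to the action of $G$ on boundary faces $F_\Omega(x)$, where even a discrete group typically induces a non-discrete one. Concretely, a dense subgroup of the positive diagonal group acts co-compactly (with dense orbits) on the standard open simplex; it is Abelian but neither finitely generated nor virtually $\Zb^r$, so your entire rank-and-character framework is silent, while the theorem applies with $S$ equal to the simplex itself.

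Second, and independently, the extreme-point mechanism fails. You need vertices that are extreme in $\overline{\Omega}$ so that $F_\Omega(x)=\{x\}$ converts your commuting-limit argument (your use of Proposition~\ref{prop:projections_onto_faces}, which is exactly the paper's Lemma~\ref{lem:abelian_preserves_faces}) into a fixed point; but $\partiali \Cc$ may contain \emph{no} extreme points of $\overline{\Omega}$, and your claim that $\lim_n g^{\pm n}p_0$ is extreme is false. In the spirit of Example~\ref{ex:non_convex_cocompact}, let $\Omega_{\star}=\{[(v,w)]: v,w \in C\}$ with $C$ the cone over the disk, let $g=[\gamma \oplus \gamma]$ for $\gamma$ a hyperbolic isometry of the disk with axis endpoints $\xi_{\pm}$, and let $\Cc$ be the open segment $\{[(s\xi_+ + t\xi_-,\, s\xi_+ + t\xi_-)]: s,t>0\}$, which is a closed convex subset of $\Omega_{\star}$ on which $\ip{g}$ acts co-compactly by translation. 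Here $\partiali \Cc = \{[(\xi_+,\xi_+)],[(\xi_-,\xi_-)]\}$, and neither point is extreme in $\overline{\Omega_{\star}}$: each lies in the open segment from $[(\xi_{\pm},0)]$ to $[(0,\xi_{\pm})]$ inside $\partial \Omega_{\star}$; moreover $[(\xi_+,\xi_+)]$ \emph{is} the attracting limit $\lim_n g^n p_0$ realizing $\lambda_1(g)$. The paper circumvents both problems at once by inducting on the faces themselves: Lemma~\ref{lem:abelian_preserves_faces} gives $G$-invariance of $F_\Omega(x)$ and a co-compact action on $F_\Omega(x)\cap \partiali\Cc$ (no discreteness used, since the operator $T\in \overline{G}^{\End}$ does all the work), induction supplies fixed points in the closures of two opposite faces of a properly embedded segment, and then proper embeddedness of $S$ --- the step you explicitly flagged but left open --- comes for free from choosing a \emph{minimal} collection of fixed points whose convex hull meets $\Omega$: Carath\'eodory plus minimality forces both the linear independence of the vertices and $\partiali S \subset \partiali \Cc$. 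Without removing the discreteness hypothesis from your argument and without a substitute for extremality and for the minimality trick, the proposal does not prove the theorem as stated.
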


\begin{remark}Notice that we do not assume that $G$ is a discrete subgroup of $\Aut(\Omega)$. 
\end{remark}

The rest of the section is devoted to the proof of the theorem. We will induct on 
\begin{align*}
\dim \Omega + \dim \Cc.
\end{align*}
The base case, when $\dim \Omega = 1$ and  $\dim \Cc=0$, is trivial. 

Suppose that $\Omega, \Cc, G$ satisfy the hypothesis of the theorem. From Proposition~\ref{prop:center_of_mass} we immediately obtain the following.

\begin{observation} If $\Cc$ is compact, then $G$ fixes the point ${ \rm CoM}_{\Omega}(\Cc)$. \end{observation}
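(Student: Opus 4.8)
The plan is to read the conclusion off directly from the equivariance property (part (3)) of Proposition~\ref{prop:center_of_mass}. The essential point is that the compactness hypothesis on $\Cc$ is precisely what guarantees the center of mass is defined in the first place: since $\Cc$ is compact and $\Cc \subset \Omega$, the pair $(\Omega, \Cc)$ lies in $\Kc_d$, so ${\rm CoM}_\Omega(\Cc)$ is a well-defined point of $\Omega$.

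First I would fix an arbitrary $g \in G$. Since $G \leq \Stab_\Omega(\Cc) \leq \Aut(\Omega)$, the element $g$ satisfies both $g\Omega = \Omega$ (as an automorphism of $\Omega$) and $g\Cc = \Cc$ (as an element of the stabilizer). Viewing $g$ as an element of $\PGL_d(\Rb)$ and applying part (3) of Proposition~\ref{prop:center_of_mass} to the pair $(\Omega, \Cc) \in \Kc_d$ then gives
\[
g \cdot {\rm CoM}_\Omega(\Cc) = {\rm CoM}_{g\Omega}(g\Cc) = {\rm CoM}_\Omega(\Cc),
\]
where the second equality uses $g\Omega = \Omega$ and $g\Cc = \Cc$. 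As $g \in G$ was arbitrary, every element of $G$ fixes the point ${\rm CoM}_\Omega(\Cc)$, which is exactly the claim.

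There is no real obstacle here, which is why the statement is presented as an immediate observation: its entire content is the functoriality of the center-of-mass construction under projective transformations, already packaged into Proposition~\ref{prop:center_of_mass}. The only point meriting any care is confirming that the construction applies, i.e.\ that compactness of $\Cc$ places $(\Omega,\Cc)$ in $\Kc_d$ so that ${\rm CoM}_\Omega(\Cc)$ makes sense; this is immediate. I note that neither discreteness of $G$ nor the co-compactness of the action is used for this particular observation — only that $G$ stabilizes both $\Omega$ and $\Cc$, together with the compactness of $\Cc$.
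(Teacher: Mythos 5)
Your proof is correct and is exactly the argument the paper intends: the observation is stated as an immediate consequence of Proposition~\ref{prop:center_of_mass}, and your application of its equivariance property (3) with $g\Omega=\Omega$ and $g\Cc=\Cc$ is that argument spelled out. Your side remarks --- that compactness of $\Cc$ is what places $(\Omega,\Cc)$ in $\Kc_d$, and that neither discreteness nor co-compactness is needed --- are accurate.
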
 

Since a point is a 0-dimensional simplex, the above observation completes the proof in the case when $\Cc$ is compact. So for the rest of the argument we assume that $\Cc$ is non-compact and hence $\partiali \Cc \neq \emptyset$. Our first goal will be to find a finite number of fixed points $x_1, \dots, x_k$ of $G$ in $\partiali \Cc$ such that 
\begin{align*}
 {\rm ConvHull}_\Omega \{ x_1, \dots, x_k\} \cap \Omega
\end{align*}
is non-empty. 

\begin{lemma}\label{lem:abelian_preserves_faces} If $x \in \partiali \Cc$ and $F: = F_\Omega(x)$, then 
\begin{enumerate}
\item $G \leq \Stab_\Omega(F)$, 
\item $G \leq \Stab_\Omega(F \cap \partiali\Cc)$, and
\item  $G$ acts co-compactly on $F \cap \partiali\Cc$.
\end{enumerate}
 \end{lemma}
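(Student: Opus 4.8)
The plan is to use the projection endomorphism supplied by Proposition~\ref{prop:projections_onto_faces} and exploit the fact that $G$ is Abelian to show that this endomorphism intertwines the $G$-action with itself. Since $x \in \partiali\Cc$ and $G$ acts co-compactly on $\Cc$, Proposition~\ref{prop:projections_onto_faces} produces an endomorphism $T \in \overline{G}^{\End}$ with $\Pb(\ker T)\cap\Omega=\emptyset$, $T(\Omega)=F$, and $T(\Cc)=F\cap\partiali\Cc$. Unwinding its proof, $T=\lim_n \overline{g}_n$ in $\End(\Rb^d)$ for some lifts $\overline{g}_n\in\GL_d(\Rb)$ of elements $g_n\in G$.

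The key step, and the main obstacle, is to show that each $g\in G$ commutes with $T$ projectively, i.e.\ that for a lift $\overline g$ of $g$ there is a scalar $\lambda\ne0$ with $\overline g\,T=\lambda\,T\,\overline g$. Because $G$ is Abelian, $[g]$ and $[g_n]$ commute in $\PGL_d(\Rb)$, so $\overline g\,\overline g_n=\lambda_n\,\overline g_n\,\overline g$ for scalars $\lambda_n\in\Rb^{\ast}$. Letting $n\to\infty$ gives $\overline g\,\overline g_n\to\overline g\,T$ and $\overline g_n\,\overline g\to T\,\overline g$, both nonzero since $T\ne0$ and $\overline g$ is invertible; comparing operator norms shows $\abs{\lambda_n}=\norm{\overline g\,\overline g_n}/\norm{\overline g_n\,\overline g}$ converges to a limit in $(0,\infty)$, so after passing to a subsequence $\lambda_n\to\lambda\ne0$ and the matrix identity $\overline g\,T=\lambda\,T\,\overline g$ holds in the limit. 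As projective maps this says $g\circ T=T\circ g$ wherever both sides are defined, in particular on all of $\Omega$ since $\Pb(\ker T)\cap\Omega=\emptyset$.

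Given this intertwining, parts (1) and (2) are immediate from the formulas for $T(\Omega)$ and $T(\Cc)$: writing $F=T(\Omega)$ we get $g(F)=g(T(\Omega))=T(g(\Omega))=T(\Omega)=F$ using $g(\Omega)=\Omega$, and likewise $g(F\cap\partiali\Cc)=g(T(\Cc))=T(g(\Cc))=T(\Cc)=F\cap\partiali\Cc$ using $g(\Cc)=\Cc$. These set-level identities come directly from $\overline g\,T=\lambda\,T\,\overline g$ by tracking representatives, since every point of $\Omega$ (hence of $\Cc$) avoids $\Pb(\ker T)$.

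For part (3), co-compactness transports through $T$. As $G$ acts co-compactly by isometries on the proper metric space $(\Cc,H_\Omega)$, there is a compact set $D\subset\Cc$ with $G\cdot D=\Cc$. Then $T(D)$ is a compact subset of $F\cap\partiali\Cc=T(\Cc)$, and the intertwining gives $G\cdot T(D)=T(G\cdot D)=T(\Cc)=F\cap\partiali\Cc$. Hence the quotient $(F\cap\partiali\Cc)/G$ is the continuous image of the compact set $T(D)$ and is therefore compact, so $G$ acts co-compactly on $F\cap\partiali\Cc$. The only delicate point throughout is the convergence of the scalars $\lambda_n$ in the key step; everything else is formal manipulation of the intertwining relation together with the three properties of $T$.
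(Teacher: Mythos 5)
Your proposal is correct and follows essentially the same route as the paper: apply Proposition~\ref{prop:projections_onto_faces} to obtain $T \in \overline{G}^{\End}$ with $T(\Omega)=F$ and $T(\Cc)=F\cap\partiali\Cc$, use commutativity of $G$ to get $g \circ T = T \circ g$ on $\Omega$, and then read off all three conclusions plus co-compactness from the image of a compact fundamental set. The only difference is that you carefully justify the projective commutation $\overline g\,T=\lambda\,T\,\overline g$ (controlling the scalars $\lambda_n$ arising from lifts to $\GL_d(\Rb)$), a point the paper's proof asserts in a single sentence; this is a legitimate and welcome filling-in of detail, not a different argument.
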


\begin{proof} 
By Proposition~\ref{prop:projections_onto_faces} there exists some $T \in \overline{G}^{\End}$ such that $\Pb(\ker T) \cap \Omega= \emptyset$, $T(\Omega)=F$, and $T(\Cc) = F \cap \partiali\Cc$. Since $G$ is Abelian, $T \circ g = g \circ T$ for every $g \in G$. 

Then for $g \in G$ we have
\begin{align*} 
gF = gT(\Omega) = T(g \Omega)=T(\Omega) = F.
\end{align*}
Since $g \in G$ was arbitrary, $G \leq \Stab_\Omega(F)$. Then $G \leq \Stab_\Omega(F \cap \partiali\Cc)$ since $G \leq \Stab_\Omega(\Cc)$.

Since $G$ acts co-compactly on $\Cc$, there exists a compact set $K \subset \Cc$ such that $G \cdot K = \Cc$. Since $\Pb(\ker T) \cap\Omega = \emptyset$, the map 
\begin{align*}
p \in \Omega ~\mapsto~ T(p) \in F_\Omega(x)
\end{align*}
is continuous. So  $K_F : = T(K)$ is a compact subset of $F \cap \partiali \Cc$. Then 
\begin{align*}
G \cdot K_F = G \cdot T(K) = T( G \cdot K) = T(\Cc) =F \cap \partiali \Cc.
\end{align*}
So $G$ acts co-compactly on $F \cap \partiali \Cc$.
\end{proof}

\begin{lemma} There exists a properly embedded 1-dimensional simplex $\ell \subset \Cc$. \end{lemma}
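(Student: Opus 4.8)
The plan is to produce a complete (bi-infinite) projective-line geodesic whose image lies in $\Cc$; such an image is an open line segment, hence a $1$-dimensional simplex, and the construction will force its two endpoints into $\partiali\Cc$, giving proper embeddedness.

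First I would record two consequences of the hypotheses. Since $\Cc$ is closed in $\Omega$ we have $\overline{\Cc}\cap\Omega=\Cc$, so every point of $\partiali\Cc=\overline{\Cc}\setminus\Cc$ lies on $\partial\Omega$; and since $\Cc$ is non-compact while $H_\Omega$ is a proper metric by Proposition~\ref{prop:hilbert_basic}, the set $\Cc$ is unbounded. Fixing $p_0\in\Cc$, there is thus a sequence $q_n\in\Cc$ with $L_n:=H_\Omega(p_0,q_n)\to\infty$. For each $n$ let $\sigma_n\colon[0,L_n]\to\Cc$ be a unit-speed parametrization of the segment $[p_0,q_n]$, which is a projective-line geodesic by Proposition~\ref{prop:hilbert_basic}, and let $m_n=\sigma_n(L_n/2)$ be its midpoint.

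Next I would use co-compactness to recentre. Choose a compact $K\subset\Cc$ with $G\cdot K=\Cc$ and pick $g_n\in G$ with $g_n m_n\in K$. Define $\widetilde{\sigma}_n\colon[-L_n/2,L_n/2]\to\Cc$ by $\widetilde{\sigma}_n(t)=g_n\sigma_n(L_n/2+t)$; these are unit-speed projective-line geodesics with $\widetilde{\sigma}_n(0)\in K$, each $1$-Lipschitz, and each supported on a projective line $\overline{\ell}_n$. Passing to a subsequence, $\widetilde{\sigma}_n(0)\to c\in K$, the lines $\overline{\ell}_n$ converge in the compact Grassmannian $\Gr_2(\Rb^d)$, and by Arzel\`a--Ascoli (using completeness of $H_\Omega$ and closedness of $\Cc$) the maps converge uniformly on compact sets to a map $\sigma\colon\Rb\to\Cc$, defined on all of $\Rb$ because $L_n\to\infty$. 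The relation $H_\Omega(\widetilde{\sigma}_n(0),\widetilde{\sigma}_n(t))=\abs{t}$ passes to the limit, so $\sigma$ is a genuine unit-speed geodesic line whose image lies on a single projective line. Finally I would identify its endpoints: $\sigma(\Rb)$ is a maximal open segment in $\Omega$, so its closure is $[a,b]$ with $a=\lim_{t\to-\infty}\sigma(t)$ and $b=\lim_{t\to+\infty}\sigma(t)$ distinct points of $\partial\Omega$ (infinite Hilbert length forces the endpoints onto $\partial\Omega$). Since $\sigma(\Rb)\subset\Cc$ and $\Cc$ is closed, $a,b\in\overline{\Cc}\cap\partial\Omega=\partiali\Cc$. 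Thus $\ell:=\sigma(\Rb)=(a,b)$ is a $1$-dimensional simplex with vertices $a,b$, it is contained in $\Cc$, and $\partiali\ell=\{a,b\}\subset\partiali\Cc$, so by the characterization of properly embedded convex subsets, $\ell$ is properly embedded in $\Cc$.

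The main obstacle is the limiting step: one must ensure the recentred geodesics converge to a non-degenerate bi-infinite geodesic that stays inside $\Cc$, rather than drifting off to $\partial\Omega$ or collapsing to a point. Recentring the midpoints into the compact fundamental domain $K$ is precisely what prevents the family from escaping, while the preservation of the unit-speed identity in the limit is what rules out collapse; the closedness of $\Cc$ then keeps the limit inside $\Cc$.
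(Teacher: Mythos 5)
Your proof is correct, and it is essentially the paper's argument: both use co-compactness of the $G$-action on $\Cc$ to recenter a sequence of ever-longer line segments contained in $\Cc$ and extract a limiting open segment whose two ideal endpoints lie in $\partiali \Cc$, hence a properly embedded $1$-dimensional simplex. The only difference is packaging: the paper picks an ideal point $x \in \partiali\Cc$, translates points of the ray $[x_0,x)$ back to a bounded set, and takes a set-limit of the translated segments $g_n \cdot (x_0,x)$, whereas you recenter midpoints of long chords and run Arzel\`a--Ascoli on unit-speed parametrizations.
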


\begin{proof} Fix some $x_0 \in \Cc$. Since $\Cc$ is non-compact, there exists some $x \in \partiali \Cc$. Then pick $x_n \in [x_0, x)$ converging to $x$. Since $[x_0,x) \subset \Cc$ and $G$ acts co-compactly on $\Cc$, there exist $r > 0$ and a sequence $g_n \in G$ such that 
\begin{align*}
H_\Omega(g_n x_n, x_0) \leq r
\end{align*}
for all $n \geq 0$. By passing to a subsequence we can suppose that $g_n x_n \rightarrow q \in \Cc$. By passing to another subsequence we can assume that $g_n \cdot (x_0, x)$ converges to a properly embedded 1-dimensionial simplex $\ell \subset \Cc$. 
\end{proof}

\begin{lemma} There exists a finite number of fixed points $x_1, \dots, x_m$ of $G$ in $\partiali \Cc$ such that 
\begin{align*}
{\rm ConvHull}_\Omega \{ x_1, \dots, x_m\} \cap \Omega
\end{align*}
is non-empty. 
\end{lemma}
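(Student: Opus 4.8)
The plan is to feed the $1$-dimensional simplex just produced into the inductive hypothesis, applied not to $\Omega$ itself but to the two faces cut out by the endpoints of that simplex; passing to a face lowers $\dim\Omega$, so the induction becomes available. Before starting I would record one elementary fact: $\partiali\Cc$ is closed in $\Pb(\Rb^d)$. Indeed, since $\Cc$ is closed in $\Omega$ we have $\Cc=\overline{\Cc}\cap\Omega$, so $\partiali\Cc=\overline{\Cc}\setminus\Cc=\overline{\Cc}\cap\partial\Omega$, an intersection of two closed sets. In particular, for any $x\in\partiali\Cc$ the face $F:=F_\Omega(x)$ lies in $\partial\Omega$, and therefore
\[
F\cap\partiali\Cc \;=\; F\cap\overline{\Cc}\cap\partial\Omega \;=\; F\cap\overline{\Cc},
\]
which is a closed convex subset of the properly convex domain $F$ (open in its span by Observation~\ref{obs:faces}).

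Now let $\ell$ be the properly embedded $1$-dimensional simplex from the previous lemma, and write $\ell=(a,b)$ with vertices $a,b$. Proper embeddedness gives $a,b\in\partiali\Cc$, while $(a,b)=\ell\subset\Omega$. Put $F_a=F_\Omega(a)$ and $F_b=F_\Omega(b)$; since $a,b\in\partial\Omega$ these are proper faces, so $\dim F_a,\dim F_b<\dim\Omega$. By the paragraph above, $F_a\cap\partiali\Cc$ and $F_b\cap\partiali\Cc$ are non-empty (they contain $a$, resp.\ $b$) closed convex subsets of $F_a$, resp.\ $F_b$, and by Lemma~\ref{lem:abelian_preserves_faces} the group $G$ preserves each of them and acts co-compactly on it.

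Next I would invoke the inductive hypothesis. Since
\[
\dim F_a+\dim\big(F_a\cap\partiali\Cc\big)\;\le\;(\dim\Omega-1)+\dim\Cc\;<\;\dim\Omega+\dim\Cc,
\]
Theorem~\ref{thm:abelian_cc_actions}, applied to the triple $(F_a,\,F_a\cap\partiali\Cc,\,G)$ (with $G$ acting on $F_a$ by restriction), yields a properly embedded simplex $S_a\subset F_a\cap\partiali\Cc$ all of whose vertices are fixed by $G$; likewise we obtain $S_b\subset F_b\cap\partiali\Cc$. Because $\overline{S_a}\cup\overline{S_b}\subset\overline{\partiali\Cc}=\partiali\Cc$, the full list $x_1,\dots,x_m$ of vertices of $S_a$ and $S_b$ consists of $G$-fixed points lying in $\partiali\Cc$.

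It remains to see that ${\rm ConvHull}_\Omega\{x_1,\dots,x_m\}$ meets $\Omega$. Choosing any $s_a\in S_a\subset F_\Omega(a)$ and $s_b\in S_b\subset F_\Omega(b)$, part (4) of Observation~\ref{obs:faces}, applied with $x=a$, $y=b$, $p=s_a$, $q=s_b$ and using $(a,b)\subset\Omega$, gives $(s_a,s_b)\subset\Omega$. As $s_a\in\overline{S_a}$ and $s_b\in\overline{S_b}$ are convex combinations of the $x_i$, we get $(s_a,s_b)\subset{\rm ConvHull}_\Omega\{x_1,\dots,x_m\}$, so this convex hull meets $\Omega$. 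The two facts doing the real work are the closedness of $\partiali\Cc$ (which alone guarantees that the inductively produced vertices land in $\partiali\Cc$ rather than merely in its closure) and the dimension drop that legitimizes the induction; I expect the only genuinely subtle point to be the bookkeeping of these, since the geometric content is then assembled directly from Lemma~\ref{lem:abelian_preserves_faces} and Observation~\ref{obs:faces}(4).
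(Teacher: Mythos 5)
Your proof is correct and takes essentially the same route as the paper: both apply Lemma~\ref{lem:abelian_preserves_faces} together with the inductive hypothesis to the faces of the two endpoints of the properly embedded segment $\ell$, collect the $G$-fixed vertices of the two resulting simplices, and then use Observation~\ref{obs:faces} part (4) to see that the combined convex hull meets $\Omega$. Your extra bookkeeping (the closedness of $\partiali \Cc$, hence that the inductively produced vertices lie in $\partiali\Cc$, and the dimension count legitimizing the induction) just makes explicit details the paper leaves implicit.
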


\begin{proof} By the previous lemma there exists a properly embedded 1-dimensional simplex $\ell \subset \Cc$. Let  $y_1, y_2$ be the endpoints of $\ell$ and let $F_j:= F_\Omega(y_j)$. 

First, we will find a finite number of fixed points $a_1, \dots, a_k$ of $G$ in $\overline{F}_1 \cap \partiali \Cc$ such that 
\begin{align*}
{\rm ConvHull}_\Omega \left\{ a_1, \dots, a_k \right\} \cap F_1
\end{align*}
is non-empty. By Lemma~\ref{lem:abelian_preserves_faces} and induction there exists a properly embedded simplex $S_1 \subset F_1$ where $G$ fixes each vertex of $S_1$. Let $a_1,\dots,a_k$ be the vertices of $S_1$. Then
\begin{align*}
S_1 = {\rm ConvHull}_\Omega \left\{ a_1, \dots, a_k \right\} \cap F_1
\end{align*}
is non-empty. 

Applying the same argument to $F_2$ yields a finite number of fixed points $b_1, \dots, b_n$ of $G$ in $\overline{F_2} \cap \partiali \Cc$ such that 
\begin{align*}
{\rm ConvHull}_\Omega \left\{ b_1, \dots, b_n \right\} \cap F_2
\end{align*}
is non-empty. 

Finally, we claim that 
\begin{align*}
{\rm ConvHull}_\Omega \left\{ a_1, \dots, a_k, b_1, \dots, b_n \right\} \cap \Omega \neq \emptyset.
\end{align*}
is non-empty. By construction, this convex hull contains some $a^\prime \in F_1$ and $b^\prime \in F_2$. Since $y_1 \in F_1$, $y_2 \in F_2$, and $\ell=(y_1, y_2) \subset \Omega$, Observation~\ref{obs:faces} part (4) implies that $(a^\prime, b^\prime) \subset \Omega$. Then 
\begin{align*}
(a^\prime, b^\prime) \subset {\rm ConvHull}_\Omega\left\{ a_1, \dots, a_k, b_1, \dots, b_n \right\} \cap \Omega
\end{align*}
and we are done. 
\end{proof}

By the previous lemma, there exist fixed points $x_1, \dots, x_m$ of $G$ in $\partiali \Cc$ such that 
\begin{align*}
S:={\rm ConvHull}_\Omega \{ x_1, \dots, x_m\} \cap \Omega
\end{align*}
is non-empty. We can also assume  that $m$ is minimal in the following sense: if $y_1, \dots, y_k$ are fixed points of $G$ in $\partiali \Cc$ with $k < m$, then 
\begin{align*}
{\rm ConvHull}_\Omega \{ y_1, \dots, y_k\} \cap \Omega = \emptyset. 
\end{align*}
Also, notice that $m \geq 2$ since $x_1,\dots, x_m \in \partiali\Cc$ and $S \neq \emptyset$. We complete the proof of Theorem~\ref{thm:abelian_cc_actions} by proving the following.

\begin{lemma} $S$ is a properly embedded simplex in $\Omega$, $G \leq \Stab_\Omega(S)$, $G$ acts co-compactly on $S$, and $G$ fixes each vertex of $S$.
\end{lemma}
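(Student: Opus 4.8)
The plan is to read everything off the minimality of $m$ together with proper convexity, and to obtain co-compactness by a soft topological argument rather than by analyzing the $G$-action. Write $\pi\colon \Rb^d\setminus\{0\}\to\Pb(\Rb^d)$ for the projection and fix lifts $\tilde x_1,\dots,\tilde x_m\in\overline{C}$ of $x_1,\dots,x_m$, where $\pi^{-1}(\Omega)=C\sqcup(-C)$ and $C$ is a sharp properly convex cone (as in Example~\ref{ex:non_convex_cocompact}), so that $\overline C\cap(-\overline C)=\{0\}$. Then ${\rm ConvHull}_\Omega\{x_1,\dots,x_m\}$ is the projectivization of $\{\sum_i t_i\tilde x_i : t_i\geq 0\}$, and every point of $S$ has a representative $\sum_i t_i\tilde x_i$ with $t_i\geq 0$. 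The first observation I would record is a direct consequence of minimality: if some $s\in S$ admitted a representation $s=[\sum_i t_i\tilde x_i]$ with $t_i\geq 0$ and $t_{i_0}=0$, then $s\in{\rm ConvHull}_\Omega\{x_i:i\neq i_0\}\cap\Omega$, contradicting minimality of $m$; hence in \emph{every} such representation all $t_i>0$.

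The heart of the argument, and the step I expect to be the main obstacle, is to show that $\tilde x_1,\dots,\tilde x_m$ are linearly independent. Suppose not, so $\sum_i c_i\tilde x_i=0$ with the $c_i$ not all zero. Because $\overline C$ is sharp, a nontrivial relation among elements of $\overline C$ cannot have all coefficients of one sign, so both $\{c_i>0\}$ and $\{c_i<0\}$ are nonempty. Fixing $s_0=[\sum_i t_i\tilde x_i]\in S$ with all $t_i>0$, I would replace each $t_i$ by $t_i+\lambda c_i$ (which leaves $s_0$ unchanged, since $\sum_i c_i\tilde x_i=0$) and increase $\lambda$ from $0$ until the first coefficient vanishes: at that value all coefficients are $\geq 0$ and at least one is $0$, producing a nonnegative representation of $s_0$ with a vanishing coefficient, contradicting the previous paragraph. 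Hence the lifts are independent, ${\rm ConvHull}_\Omega\{x_1,\dots,x_m\}$ is a closed $(m-1)$-simplex with relative interior $\sigma=\{[\sum_i t_i\tilde x_i]:t_i>0\}$, and by the first paragraph $S\subseteq\sigma$. For the reverse inclusion I would use convexity: given $q\in\sigma$, since $\sigma$ is open in $\Pb(\Spanset\{\tilde x_i\})$ there is $q'\in\sigma\subseteq\overline\Omega$ with $q\in[s_0,q')$, and $[s_0,q')\subset\Omega$ because $s_0\in\Omega$ and $\Omega$ is open and convex; thus $\sigma\subseteq\Omega$ and $S=\sigma$ is a simplex of dimension $m-1$ with vertices $x_1,\dots,x_m$.

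The remaining conclusions are then routine. Since $G$ fixes each $x_i$ it preserves their convex hull, hence preserves $S$, giving $G\leq\Stab_\Omega(S)$; and the vertices of $S$ are precisely the fixed points $x_1,\dots,x_m$. Because $x_1,\dots,x_m\in\overline{\Cc}$ and $\overline{\Cc}$ is convex, $\overline S={\rm ConvHull}_\Omega\{x_1,\dots,x_m\}\subseteq\overline{\Cc}$, so $S\subseteq\overline{\Cc}\cap\Omega=\Cc$. Moreover $\partiali S$ is the union of the proper faces of the closed simplex, each of which lies in the convex hull of fewer than $m$ of the $x_i$ and hence misses $\Omega$ by minimality; therefore $\partiali S\subseteq\partial\Omega=\partiali\Omega$, and $S$ is properly embedded in $\Omega$ by the characterization of properly embedded convex subsets in Section~\ref{sec:Hilbert_metric} (the same computation gives $\partiali S\subseteq\partiali\Cc$, so $S$ is properly embedded in $\Cc$ as well).

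Finally, for co-compactness I would avoid computing the $G$-action entirely. From $\overline S\subseteq\overline{\Cc}$ one gets $S=\overline S\cap\Omega=\overline S\cap\Cc$, so $S$ is a closed, $G$-invariant (hence saturated) subset of $\Cc$. Restricting the quotient map $\Cc\to G\backslash\Cc$ to the saturated closed set $S$ yields a quotient map onto the subset $\{Gs:s\in S\}$, which is closed in $G\backslash\Cc$ since its preimage $GS=S$ is closed; as $G\backslash\Cc$ is compact, this subset is compact, and it is homeomorphic to $G\backslash S$. Thus $G\backslash S$ is compact, i.e.\ $G$ acts co-compactly on $S$, completing the proof.
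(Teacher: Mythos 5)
Your proof is correct, and its overall architecture matches the paper's: use minimality of $m$ to obtain the simplex structure, deduce proper embedding from minimality applied to proper subsets of $\{x_1,\dots,x_m\}$, and get co-compactness from the soft fact that a closed $G$-invariant subset of a space with compact quotient itself has compact quotient (the paper states this last step in a single sentence; your saturated-set argument is just its expansion, and it is valid). The one genuine difference is the route to linear independence of the vertices. The paper fixes $p \in S$, invokes Carath\'eodory's convex hull theorem to write $p$ as a combination of at most $\dim S + 1$ of the $x_i$, and lets minimality force $m \leq \dim S + 1$, while $\dim \Pb(\Spanset\{x_1,\dots,x_m\}) \leq m-1$ gives the reverse inequality; equality then yields independence and the simplex structure. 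You instead argue directly: a nontrivial relation $\sum_i c_i \tilde x_i = 0$ must have coefficients of both signs by sharpness of the cone, and sliding a strictly positive representation of a point of $S$ along the relation until the first coefficient vanishes produces a nonnegative representation with a vanishing coefficient, contradicting your first observation. This is essentially the standard proof of Carath\'eodory's theorem inlined into the situation at hand, so it is more self-contained but not shorter; what it buys is that you never need to discuss $\dim S$ at all, and you obtain the explicit identification $S = \left\{\left[\textstyle\sum_i t_i \tilde x_i\right] : t_i > 0\right\}$ along the way, which makes the statements about the vertices, the inclusion $S \subset \Cc$, and the ideal boundary $\partiali S$ completely transparent.
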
 

\begin{proof} 
Let $d_0:=\dim S$ (in the sense of Definition~\ref{defn:topology}). We claim that $d_0=m-1$. By definition, 
\begin{align*}
d_0 = \dim \Pb( \Spanset \{ x_1,\dots, x_m\}) \leq m-1.
\end{align*}
For the reverse inequality, fix $p \in S$. Then by Carath\'eodory's convex hull theorem there exists $x_{i_1}, \dots, x_{i_k}$ with $k \leq d_0+1$ such that 
\begin{align*}
p \in {\rm ConvHull}_\Omega \{ x_{i_1},\dots, x_{i_{k}}\}.
\end{align*}
Hence
\begin{align*}
\emptyset \neq {\rm ConvHull}_\Omega \{ x_{i_1},\dots, x_{i_{k}}\} \cap \Omega.
\end{align*}
So by our minimality assumption we must have $k = m$ and so $m \leq d_0+1$. So $m=d_0+1$. Thus $x_1,\dots, x_m$ are linearly independent and hence $S$ is a simplex with vertices $\{x_1,\dots, x_m\}$.

By the minimality property, for any proper subset $\{ x_{i_1},\dots, x_{i_k}\} \subset \{x_1,\dots, x_m\}$ we have
\begin{align*}
\emptyset={\rm ConvHull}_\Omega \{ x_{i_1}, \dots, x_{i_k}\} \cap \Omega.
\end{align*}
So $S$ is a properly embedded simplex of $\Omega$.

By construction $G \leq \Stab_\Omega(S)$ and $G$ fixes each vertex of $S$. Finally, since $S \subset \Cc$ is a closed subset and $G$ acts co-compactly on $\Cc$, we see that $G$ acts co-compactly on $S$. 
\end{proof}

\section{Centralizers and minimal translation sets}\label{sec:minimal_sets}

In this section we prove Theorem~\ref{thm:centralizers_act_cocpctly} which we restate here. 

\begin{theorem}Suppose that $(\Omega, \Cc, \Lambda)$ is a naive convex co-compact triple and $A \leq \Lambda$ is an Abelian subgroup. Then
\begin{align*}
\Min_{\Cc}(A): = \Cc \cap \bigcap_{a \in A} \Min(a) 
\end{align*}
is non-empty and $C_{\Lambda}(A)$ acts co-compactly on ${ \rm ConvHull}_\Omega( \Min_{\Cc}(A))$. 
\end{theorem}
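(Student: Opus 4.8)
The plan is to establish the two assertions separately, first the non-emptiness of $\Min_{\Cc}(A)$ and then the co-compactness of the $C_\Lambda(A)$-action. For the non-emptiness, I would exploit the explicit formula from Proposition~\ref{prop:min_trans_compute}: since $\tau_\Omega(a) = \tfrac12 \log \tfrac{\lambda_1}{\lambda_d}(a)$ depends only on the conjugacy class (indeed only on eigenvalue data) of $a$, the displacement function $x \mapsto H_\Omega(x, ax)$ on the co-compact piece $\Cc$ should achieve its infimum. Concretely, I would take a sequence $x_n \in \Cc$ with $H_\Omega(x_n, a x_n) \to \tau_\Omega(a)$; using co-compactness of $\Lambda$ on $\Cc$, translate by $\gamma_n \in \Lambda$ so that $\gamma_n x_n$ stays in a fixed compact set $K$. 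The subtlety is that $\gamma_n x_n$ realizes the infimum for the \emph{conjugate} $\gamma_n a \gamma_n^{-1}$ rather than $a$ itself, so I would need $\Lambda$ to be discrete (so only finitely many conjugates $\gamma_n a \gamma_n^{-1}$ have the bounded displacement on $K$) to pass to a subsequence where the conjugating element stabilizes and extract an honest minimum for $a$. Intersecting over the finitely many generators of $A$ (and hence, since the $\Min$ sets of commuting elements interact well, over all of $A$) gives a point in $\Min_\Cc(A)$.

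Once $\Min_\Cc(A)$ is shown non-empty, I would verify it is a closed convex $C_\Lambda(A)$-invariant set. Convexity of each $\Min(a)$ follows from convexity of the displacement function along Hilbert geodesics (this is implicit in the neighborhood-convexity Proposition~\ref{prop:convexity-of-nbd} together with Lemma~\ref{lem:Crampons_dist_est}, which controls how geodesics spread). Invariance under $C_\Lambda(A)$ is immediate: if $h$ commutes with every $a \in A$, then $H_\Omega(hx, a hx) = H_\Omega(hx, h a x) = H_\Omega(x, ax)$ by $\Aut(\Omega)$-invariance of $H_\Omega$, so $h$ preserves each $\Min(a)$ and hence $\Min_\Cc(A)$. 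The convex hull $\CH_\Omega(\Min_\Cc(A))$ is then also closed, convex, and $C_\Lambda(A)$-invariant.

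The main obstacle is the co-compactness of the $C_\Lambda(A)$-action on $\CH_\Omega(\Min_\Cc(A))$. Here I would argue by contradiction: suppose there is a sequence $y_n$ in the convex hull escaping every compact fundamental domain, i.e.\ $H_\Omega(y_n, C_\Lambda(A) \cdot y_0) \to \infty$ for a basepoint $y_0$. Using co-compactness of the full group $\Lambda$ on $\Cc$, choose $\gamma_n \in \Lambda$ with $\gamma_n y_n$ in a fixed compact set. The key point is to show that the conjugates $\gamma_n a \gamma_n^{-1}$, for $a \in A$, again have bounded displacement at the bounded points $\gamma_n y_n$ (since $y_n$ lies in the convex hull of the minimal set, and translation lengths are conjugation-invariant); discreteness of $\Lambda$ then forces $\gamma_n a \gamma_n^{-1}$ to range over a finite set, so after passing to a subsequence $\gamma_n A \gamma_n^{-1} = \gamma_m A \gamma_m^{-1}$ is constant, whence $\gamma_m^{-1}\gamma_n$ normalizes $A$. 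The remaining difficulty is upgrading ``normalizes $A$'' to ``centralizes $A$'': I would use that $A$ is finitely generated abelian acting by elements whose eigenvalue configuration is rigid, so the normalizer permutes a finite set of data and a finite-index subgroup of the normalizer centralizes $A$; passing to this finite-index subgroup keeps the displacements bounded and produces the desired $C_\Lambda(A)$-translate of $y_n$ staying compact, contradicting the assumption. This passage from the normalizer to the centralizer, controlled via discreteness and the spectral rigidity of elements of $A$, is where I expect the real work to lie.
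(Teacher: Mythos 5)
The fatal gap is in your non-emptiness argument, and it occurs at three distinct points. First, circularity: you take $x_n \in \Cc$ with $H_\Omega(x_n, a x_n) \to \tau_\Omega(a)$, but $\tau_\Omega(a)$ is by definition the infimum of the displacement over all of $\Omega$, and there is no a priori reason the infimum over the invariant set $\Cc$ equals this number --- that $\Min(a)$ meets $\Cc$ at all is part of what the theorem asserts. Your compactness-plus-conjugation device (which is sound, and is the same device the paper uses) at best produces a point of $\Cc$ minimizing the displacement \emph{restricted to} $\Cc$; nothing identifies that minimum with $\tau_\Omega(a)$. Second, the claimed convexity of $\Min(a)$ is unsupported: Lemma~\ref{lem:Crampons_dist_est} gives only the endpoint-subadditive bound, not convexity of $t \mapsto H_\Omega(\sigma_1(t),\sigma_2(t))$ (which fails in Hilbert geometry outside the ellipsoidal case), and Proposition~\ref{prop:convexity-of-nbd} concerns the distance to a fixed convex set, not displacement functions. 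This is precisely why the paper never claims that $\Min(a)$ or the sublevel sets $M_r$ are convex, and instead pays the factor $2^{d-1}$ of Lemma~\ref{lem:centralizer_two} when passing to convex hulls. Third, ``the $\Min$ sets of commuting elements interact well'' is not a fact you can invoke --- it \emph{is} the flat-torus-theorem content being proven; even granting minimizers for each generator $a_1,\dots,a_m$ exist in $\Cc$, you give no argument that they can be chosen simultaneously, nor that a common minimizer for the generators minimizes the displacement of every element of $A$. The paper's actual mechanism has no counterpart in your outline: enlarge $A$ to a maximal abelian $A'$ (so $A' = C_\Lambda(A')$), run the cocompactness half on the nonempty sets $M_r'$ to get $A'$ acting cocompactly on a convex subset of $\Cc$, apply Theorem~\ref{thm:abelian_cc_actions} to produce a properly embedded simplex whose vertices $A'$ fixes, and then use Proposition~\ref{prop:min_set_inv_simplex} (the explicit computation in a simplex, via Observation~\ref{obs:intersection_with_subspace}) to conclude that this whole simplex lies in $\Min(a)$ for \emph{every} $a \in A'$ simultaneously. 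That is what makes $\Cc \cap \bigcap_{a\in A}\Min(a)$ nonempty, and it is absent from your proposal.

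Your cocompactness half is essentially the paper's argument (its Lemma on cocompactness of $C_\Lambda(A)$ acting on $M_r$), but with two defects. (i) You stabilize only the conjugated \emph{group} $\gamma_n A \gamma_n^{-1}$ and are then stuck upgrading ``normalizes $A$'' to ``centralizes $A$''; the vague spectral-rigidity step you propose is unnecessary. Stabilize the generators element-wise instead: for each of the finitely many generators $a_j$ (finite generation is itself not automatic --- the paper gets it from the Zariski closure together with Lemma~\ref{lem:finite_generation}), properness forces the conjugates $\gamma_n a_j \gamma_n^{-1}$, which displace the bounded points $\gamma_n y_n$ by a bounded amount, into a finite set; after passing to a subsequence $\gamma_n a_j \gamma_n^{-1} = \gamma_1 a_j \gamma_1^{-1}$ for all $j$ and $n$, so $\gamma_1^{-1}\gamma_n$ commutes with every $a_j$ and hence lies in $C_\Lambda(A)$, finishing the contradiction directly. (ii) The bounded displacement of the generators at points of ${\rm ConvHull}_\Omega(\Min_\Cc(A))$ does not follow from conjugation-invariance of translation lengths, nor from your (unjustified) convexity claim; it is exactly the content of Lemma~\ref{lem:centralizer_two} (Carath\'eodory plus Lemma~\ref{lem:Crampons_dist_est}, giving ${\rm ConvHull}_\Omega(M_r) \subset M_{2^{d-1}r}$), which your outline skips.
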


The proof the theorem will use the following observations about minimal translation sets.

\begin{observation}\label{obs:intersection_with_subspace} Suppose that $\Omega \subset \Pb(\Rb^d)$ is a properly convex domain and $g \in \Aut(\Omega)$. If $V \subset \Rb^d$ is a linear subspace where $\dim V > 1$,  $\Omega\cap\Pb(V) \neq \emptyset$, and $V$ is $g$-invariant, then 
\begin{align*}
\tau_{\Omega \cap \Pb(V)}(g) = \tau_\Omega(g).
\end{align*}
\end{observation}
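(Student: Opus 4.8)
The plan is to set $\Omega' := \Omega \cap \Pb(V)$ and compare the translation length of $g$ on $\Omega'$ with that on $\Omega$, establishing the two inequalities $\tau_{\Omega'}(g) \geq \tau_\Omega(g)$ and $\tau_{\Omega'}(g) \leq \tau_\Omega(g)$ separately. First I would record that $\Omega'$ is a legitimate object: since $\Omega$ is open and $\Pb(V)$ is a projective subspace, $\Omega'$ is open in $\Pb(V)$, hence (being nonempty) its span is all of $V$ and it is open in its span; and it is properly convex, being the intersection of the properly convex set $\Omega$ with $\Pb(V)$. Because $V$ is $g$-invariant and $g\Omega = \Omega$, we get $g\Omega' = \Omega'$, so $g \in \Aut(\Omega')$ and $\tau_{\Omega'}(g)$ is defined (the hypothesis $\dim V > 1$ guarantees $\dim \Omega' \geq 1$, so this is non-degenerate). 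The key geometric observation is that $H_{\Omega'}$ is literally the restriction of $H_\Omega$: for $x, y \in \Omega'$ the projective line $\overline{xy}$ lies inside $\Pb(V)$, so $\overline{xy} \cap \Omega' = \overline{xy} \cap \Omega$ and the two chords share the same endpoints on $\partial\Omega$; hence the defining cross-ratios coincide and $H_{\Omega'}(x,y) = H_\Omega(x,y)$.

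For the first inequality I would simply use that $\Omega' \subseteq \Omega$. Combining this with the previous observation,
\[
\tau_{\Omega'}(g) = \inf_{x \in \Omega'} H_{\Omega'}(x, gx) = \inf_{x \in \Omega'} H_\Omega(x, gx) \geq \inf_{x \in \Omega} H_\Omega(x, gx) = \tau_\Omega(g),
\]
since the infimum over the smaller set $\Omega'$ can only be larger. (Monotonicity of the Hilbert metric would suffice here even without the isometric-embedding remark.)

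For the reverse inequality I would invoke the eigenvalue formula of Proposition~\ref{prop:min_trans_compute}, applied once on $\Omega$ and once on $\Omega'$. Fix a lift $\overline{g} \in \GL_d(\Rb)$ preserving $V$; then $\overline{g}|_V$ is a lift of $g$ acting on $\Pb(V)$, and writing $k = \dim V$ the proposition gives $\tau_\Omega(g) = \frac{1}{2} \log \frac{\lambda_1(\overline{g})}{\lambda_d(\overline{g})}$ and $\tau_{\Omega'}(g) = \frac{1}{2} \log \frac{\lambda_1(\overline{g}|_V)}{\lambda_k(\overline{g}|_V)}$. Since $V$ is $\overline{g}$-invariant, in a basis adapted to $V$ the matrix of $\overline{g}$ is block upper triangular, so the characteristic polynomial of $\overline{g}|_V$ divides that of $\overline{g}$; thus the eigenvalues of $\overline{g}|_V$ form a sub-multiset of those of $\overline{g}$. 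Consequently $\lambda_1(\overline{g}|_V) \leq \lambda_1(\overline{g})$ (the largest modulus over a subset is no bigger) and $\lambda_k(\overline{g}|_V) \geq \lambda_d(\overline{g})$ (the smallest modulus over a subset is no smaller), whence $\frac{\lambda_1(\overline{g}|_V)}{\lambda_k(\overline{g}|_V)} \leq \frac{\lambda_1(\overline{g})}{\lambda_d(\overline{g})}$ and therefore $\tau_{\Omega'}(g) \leq \tau_\Omega(g)$. Together with the first inequality this gives $\tau_{\Omega'}(g) = \tau_\Omega(g)$.

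The only real subtlety — and what makes the eigenvalue formula so useful here — is the reverse inequality: a purely geometric argument would want to approximate the infimum defining $\tau_\Omega(g)$ by points lying inside the slice $\Omega'$, which is awkward because the infimum need not be attained and the extreme eigendirections of $\overline{g}$ need not meet $\Pb(V)$. Proposition~\ref{prop:min_trans_compute} bypasses this entirely by turning both translation lengths into linear-algebraic quantities, after which the comparison is the elementary monotonicity of the extreme-eigenvalue ratio under passage to an invariant subspace. The remaining points (proper convexity of $\Omega'$, openness in its span, and $g \in \Aut(\Omega')$) are routine.
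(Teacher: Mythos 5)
Your proposal is correct and follows essentially the same route as the paper: the inequality $\tau_{\Omega\cap\Pb(V)}(g) \geq \tau_\Omega(g)$ comes from the fact that $H_{\Omega\cap\Pb(V)}$ is the restriction of $H_\Omega$ (so the infimum is over a smaller set), and the reverse inequality comes from applying Proposition~\ref{prop:min_trans_compute} to both $g$ and $g|_V$ together with the observation that the eigenvalues of $g|_V$ form a subcollection of those of $g$, so its extreme-eigenvalue ratio is at most $\frac{\lambda_1}{\lambda_d}(g)$. The paper phrases the eigenvalue step as ``there exist $1 \leq i < j \leq d$ with $\tau_{\Omega\cap\Pb(V)}(g|_V) = \frac{1}{2}\log\frac{\lambda_i}{\lambda_j}(g)$,'' which is exactly your sub-multiset argument in different words.
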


\begin{proof} By the definition of the Hilbert metric $H_\Omega |_{\Pb(V)\times \Pb(V)} = H_{\Omega \cap \Pb(V)}$. Hence $\tau_\Omega(g) \leq \tau_{\Omega \cap \Pb(V)}(g)$. On the other hand, $g|_{V} \in \Aut(\Omega \cap \Pb(V))$ and so Proposition \ref{prop:min_trans_compute} implies that there exists $1\leq i<j \leq d$ such that 
\begin{align*}
\tau_{\Omega \cap \Pb(V)}(g|_V) = \dfrac{1}{2}\log\dfrac{\lambda_i}{\lambda_j}  (g).
\end{align*}
So applying Proposition \ref{prop:min_trans_compute} to $g$ yields 
\begin{equation*}
\tau_{\Omega \cap \Pb(V)}(g)= \dfrac{1}{2}\log\dfrac{\lambda_i}{\lambda_j}  (g) \leq \dfrac{1}{2}\log\dfrac{\lambda_1}{\lambda_d}  (g)=\tau_{\Omega}(g). \qedhere
\end{equation*} 
\end{proof}

\begin{proposition}\label{prop:min_set_inv_simplex} Suppose that $\Omega \subset \Pb(\Rb^d)$ is a properly convex domain and $S \subset \Omega$ is a properly embedded simplex. If $g \in \Aut(\Omega)$ fixes every vertex of $S$, then $S \subset \Min(g)$. 
\end{proposition}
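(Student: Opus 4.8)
The plan is to reduce everything to an eigenvalue computation inside the span of $S$ and then sandwich $H_\Omega(x,gx)$ between $\tau_\Omega(g)$ and $\frac{1}{2}\log\frac{\lambda_1}{\lambda_d}(g)$, which coincide by Proposition~\ref{prop:min_trans_compute}. Let $k=\dim S$. If $k=0$ then $S$ is a single point fixed by $g$, so $H_\Omega(x,gx)=0=\tau_\Omega(g)$ and we are done; so assume $k\geq 1$. Let $v_0,\dots,v_k$ be the vertices of $S$ and choose lifts $\hat v_0,\dots,\hat v_k\in\Rb^d$ so that $S=\{[t_0\hat v_0+\dots+t_k\hat v_k]:t_i>0\}$. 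Set $V=\Spanset\{\hat v_0,\dots,\hat v_k\}$, so that $\dim V=k+1\geq 2$ and $\Omega':=\Omega\cap\Pb(V)$ is a properly convex domain. Since $g$ fixes each vertex $[\hat v_i]$, there are nonzero reals $\mu_0,\dots,\mu_k$ with $g\hat v_i=\mu_i\hat v_i$; in particular $V$ is $g$-invariant and $g$ preserves $\Omega'$.

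The first genuine point is to check that all the $\mu_i$ have the same sign, so that $g(S)=S$. I would argue this by contradiction: if $\mu_i$ and $\mu_j$ had opposite signs, restrict to the plane $W=\Spanset\{\hat v_i,\hat v_j\}$. The edge of $S$ spanned by $v_i,v_j$ is one open arc of $\Pb(W)\cong\Pb(\Rb^2)$, and its image under $g$ is the complementary arc; both arcs lie in $\overline{\Omega'}$, forcing $\overline{\Omega'}\cap\Pb(W)=\Pb(W)$, which is impossible since $\Omega'$ is properly convex (its closure meets no projective line in the whole line). Rescaling the lift of $g$, we may then assume every $\mu_i>0$, whence $g(S)=S$ and, for $x=[\sum_i t_i\hat v_i]\in S$, we have $gx=[\sum_i \mu_i t_i\hat v_i]\in S$.

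With $g(S)=S$ in hand, the core is a one-line computation in the intrinsic Hilbert metric of the simplex. Writing $x$ and $gx$ in vertex coordinates and applying the explicit formula of Example~\ref{ex:basic_properties_of_simplices}, all the $t_i$ cancel and I obtain
\begin{align*}
H_S(x,gx)=\frac{1}{2}\max_{i,j}\abs{\log\frac{\mu_j}{\mu_i}}=\frac{1}{2}\log\frac{\max_i\mu_i}{\min_i\mu_i},
\end{align*}
which is independent of $x$. Now I combine three comparisons. Since $x,gx\in\Pb(V)$, the proof of Observation~\ref{obs:intersection_with_subspace} gives $H_\Omega(x,gx)=H_{\Omega'}(x,gx)$; since $S\subseteq\Omega'$ are both open in $\Pb(V)$, monotonicity of the Hilbert metric under inclusion (immediate from the cross-ratio definition, as nearer endpoints enlarge the cross-ratio) gives $H_{\Omega'}(x,gx)\leq H_S(x,gx)$; and since $\{\mu_0,\dots,\mu_k\}$ is a sub-multiset of the eigenvalue moduli of $g$, we have $\max_i\mu_i\leq\lambda_1(g)$ and $\min_i\mu_i\geq\lambda_d(g)$. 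Chaining these with Proposition~\ref{prop:min_trans_compute},
\begin{align*}
H_\Omega(x,gx)\leq\frac{1}{2}\log\frac{\max_i\mu_i}{\min_i\mu_i}\leq\frac{1}{2}\log\frac{\lambda_1}{\lambda_d}(g)=\tau_\Omega(g).
\end{align*}
Because $\tau_\Omega(g)\leq H_\Omega(x,gx)$ by the definition of $\tau_\Omega(g)$, every inequality is an equality, so $H_\Omega(x,gx)=\tau_\Omega(g)$ and $x\in\Min(g)$. As $x\in S$ is arbitrary, $S\subset\Min(g)$.

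The expected main obstacle is the sign argument of the second paragraph: everything downstream is formal once one knows that $gx$ lies in $S$, but ruling out a sign change requires the proper-convexity obstruction. It is worth noting that proper embeddedness of $S$ is not actually used here — the decisive inputs are the monotonicity $H_{\Omega'}\leq H_S$ and the eigenvalue comparison — so the statement in fact holds for any simplex $S\subset\Omega$ all of whose vertices are fixed by $g$.
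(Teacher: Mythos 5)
Your proof is correct, and it takes a noticeably different route from the paper's, which is shorter but less general. The paper uses proper embeddedness right away: since $\partiali S \subset \partiali \Omega$, one has $S = \Omega \cap \Pb(V)$ with $V = \Spanset S$, so $g$ preserves $S$ automatically (no sign analysis needed) and $H_\Omega$ restricted to $S$ \emph{equals} $H_S$; Observation~\ref{obs:intersection_with_subspace} then gives $\tau_{S}(g) = \tau_\Omega(g)$, reducing without loss of generality to the case $S = \Omega$, where Example~\ref{ex:basic_properties_of_simplices} shows $g$ acts as a translation of $(\Rb^{\dim S}, {\rm dist})$ and hence every point is minimal. You keep $S$ as it is: you must then prove $g(S) = S$ by hand, and your arc argument is the right one (if $\mu_i, \mu_j$ had opposite signs, the edge and its image would be complementary arcs of $\Pb(W)$ inside $\overline{\Omega}$, contradicting proper convexity of $\Omega$ --- note this uses proper convexity of $\Omega$, not proper embeddedness of $S$); and you replace the paper's metric equality by the one-sided monotonicity $H_{\Omega \cap \Pb(V)} \leq H_S$, closing the sandwich with $\max_i \mu_i / \min_i \mu_i \leq (\lambda_1/\lambda_d)(g)$ and Proposition~\ref{prop:min_trans_compute}. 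In effect your inequality chain re-proves the one direction of Observation~\ref{obs:intersection_with_subspace} that is actually needed, and your closing remark is correct: because you only use the inequality and the sign lemma, proper embeddedness of $S$ is genuinely unnecessary, so your argument proves the slightly stronger statement for an arbitrary simplex in $\Omega$ whose vertices are fixed by $g$. Two cosmetic points: the bound $\max_i \mu_i \leq \lambda_1(g)$ should be read for the rescaled lift you fixed, since only the ratios $\lambda_i/\lambda_j$ are well defined on $\PGL_d(\Rb)$ --- harmless, as your estimate uses only the ratio; and the sub-multiset claim is justified because the $\hat v_i$ are linearly independent eigenvectors of that lift, so each $\mu_i$ is indeed among its eigenvalues.
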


\begin{proof} If $\dim S = 0$, then $S$ is a fixed point of $g$ and hence $S \subset \Min(g)$. So suppose that $\dim S \geq 1$. Then $S = \Omega \cap \Pb(\Spanset S)$ and using Observation~\ref{obs:intersection_with_subspace} there is no loss of generality in assuming that $S = \Omega$. Then the Proposition follows from Example~\ref{ex:basic_properties_of_simplices}. 
 \end{proof}

\subsection{Proof of Theorem~\ref{thm:centralizers_act_cocpctly}}

We will need the following fact about subgroups of solvable Lie groups. 

\begin{lemma}\cite[Proposition 3.8]{R1972}\label{lem:finite_generation} Let $G$ be a solvable Lie group with finitely many components and $H \leq G$ a closed subgroup. Let $H_0$ be the connected component of the identity in $H$. Then $H/H_0$ is finitely generated. 
\end{lemma}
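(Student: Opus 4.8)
The plan is to prove directly that the group of connected components $\pi_0(H)=H/H_0$ is finitely generated, via a sequence of reductions followed by an induction on $\dim G$. First I would record the standing facts: since $H$ is closed in the Lie group $G$ it is itself a Lie group by Cartan's closed subgroup theorem, its identity component $H_0$ is open and closed in $H$, and $H/H_0$ is discrete. I would reduce to $G$ connected: as $[G:G_0]<\infty$, the subgroup $H':=H\cap G_0$ has finite index in $H$ and (since $G_0$ is open) satisfies $H'_0=H_0$, so $H'/H_0$ has finite index in $H/H_0$; because a finitely generated finite-index subgroup forces the ambient group to be finitely generated, it suffices to treat $H'\leq G_0$. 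I would then reduce to $G$ simply connected by passing to the universal cover $p:\til{G}\to G$, whose kernel $\Gamma=\pi_1(G)$ is a finitely generated abelian discrete central subgroup. Setting $\til{H}:=p^{-1}(H)$, which is closed in the simply connected solvable group $\til{G}$, the open continuous surjection $p|_{\til H}:\til H\to H$ induces a surjection $\pi_0(\til H)\twoheadrightarrow\pi_0(H)$; hence it is enough to prove the statement for closed subgroups of simply connected solvable Lie groups.

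Now I would induct on $\dim G$ with $G$ simply connected solvable, the case $\dim G=0$ being trivial. Since $G$ is solvable and nontrivial, I can choose a minimal nonzero closed connected normal subgroup $N\trianglelefteq G$; minimality together with solvability forces $N$ to be abelian, and as a closed connected subgroup of a simply connected solvable group it is a vector group $N\cong\Rb^{k}$. The quotient $G/N$ is again simply connected (the fibration $N\to G\to G/N$ has contractible fiber) and solvable, of strictly smaller dimension, and I write $q:G\to G/N$ for the projection.

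The engine of the induction is the exact sequence of component groups attached to the closed normal subgroup $K:=H\cap N\trianglelefteq H$, namely $\pi_0(K)\to\pi_0(H)\to\pi_0(H/K)\to 1$ (the surjection comes from the open map $H\to H/K$, and the image of $\pi_0(K)$ is the whole kernel because $K_0\subseteq K\cap H_0$). Thus it suffices to show both end terms are finitely generated. For the first term I invoke the structure theorem for closed subgroups of $\Rb^{k}$: $K=H\cap N\cong\Rb^{c}\oplus\Zb^{e}$, so $\pi_0(K)\cong\Zb^{e}$ is finitely generated. For the last term, $H/K$ is identified as an abstract group with the image $q(H)\leq G/N$; when $q(H)$ is closed it is a closed subgroup of the lower-dimensional simply connected solvable group $G/N$, and the induction hypothesis gives that $\pi_0(q(H))$ is finitely generated, completing the step.

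The hard part will be exactly the possibility that the projected image $q(H)$ is not closed in $G/N$: a closed subgroup can project to a dense subgroup (as $\Zb^2\subset\Rb^2$ projects to $\Zb+\Zb\sqrt{2}\subset\Rb$), which is the genuine phenomenon that solvability must tame. To handle it I would replace $q(H)$ by its closure $P:=\overline{q(H)}$, a closed subgroup of $G/N$ whose component group $\pi_0(P)$ is finitely generated by induction; writing $P_0$ for the identity component of $P$, the map $q(H)/(q(H)\cap P_0)\hookrightarrow\pi_0(P)$ is injective, so the remaining issue is to prove that the dense subgroup $q(H)\cap P_0$ of the connected solvable group $P_0$ is finitely generated modulo its identity component. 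This is where I expect the main difficulty to lie, and where solvability is indispensable: the winding directions that make the image non-closed are confined to the vector-group and unipotent successive quotients, so one can show — for instance after realizing $G$ as a closed subgroup of the real upper block-triangular matrices via Lie--Kolchin and analyzing the semidirect-product structure — that the relevant dense part is generated by the images of finitely many of the $\Zb$-summands produced at each stage of the $N$-ladder, hence is finitely generated. With this established, $\pi_0(H)$ is an extension of a finitely generated group by a finitely generated group and is therefore finitely generated, closing the induction.
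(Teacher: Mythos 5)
The paper itself offers no proof of this lemma; it is quoted verbatim from Raghunathan \cite[Proposition 3.8]{R1972}, so the only fair comparison is with the standard proof there. Your skeleton matches that standard line of attack and the easy parts are correct: the reduction to $G$ connected via $H\cap G_0$, the reduction to $G$ simply connected via the universal cover (the surjection $\pi_0(p^{-1}(H))\twoheadrightarrow\pi_0(H)$ is fine), the existence of a minimal closed connected normal subgroup $N\cong\Rb^k$, the exact sequence $\pi_0(H\cap N)\to\pi_0(H)\to\pi_0(H/(H\cap N))\to 1$, and the identification $\pi_0(H\cap N)\cong\Zb^e$ from the structure of closed subgroups of $\Rb^k$. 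The problem is that the proof never actually closes: the case you yourself identify as ``the hard part'' --- when $q(H)$ is not closed in $G/N$ --- is handled only by a declaration of intent (``one can show \dots after realizing $G$ \dots via Lie--Kolchin \dots the relevant dense part is generated by the images of finitely many of the $\Zb$-summands''). No mechanism is given that produces those summands or controls how $H$ meets $P_0=\overline{q(H)}_0$, and this is precisely where the content of the theorem lives: it is the step where closedness of $H$ in $G$ (which your reduction discards the moment you pass to the abstract group $q(H)$) and solvability must actually be used. Note also that Lie--Kolchin-type triangularization presupposes a faithful linear realization of $G$, itself a nontrivial fact requiring justification for simply connected solvable groups.

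There is a second, quieter gap in the same step. From $q(H)/(q(H)\cap P_0)\hookrightarrow\pi_0(P)$ you conclude that this quotient is finitely generated because $\pi_0(P)$ is --- but finite generation does not pass to subgroups in general (already $F_2$ contains non-finitely-generated subgroups, and even f.g.\ solvable groups can have non-f.g.\ subgroups, e.g.\ in metabelian groups). To make this inference legitimate the inductive hypothesis must be strengthened from ``finitely generated'' to a subgroup-closed property such as ``polycyclic-by-finite,'' which is in fact true in this setting and is essentially how Raghunathan's induction is made to close. A related imprecision: the group you must control is $\pi_0(H/K)$ in the quotient Lie topology, and under the continuous bijection $H/K\to q(H)$ the relevant normal subgroup is the image of $(H/K)_0$, not any intrinsic identity component of the (possibly dense, hence topologically wild) subgroup $q(H)\cap P_0\subset P_0$; your phrasing conflates the two. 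So the reductions are sound, but the induction as written does not go through: the dense-image case is asserted, not proved, and the subgroup step needs a stronger inductive statement.
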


For the rest of the section fix a naive convex co-compact triple $(\Omega, \Cc, \Lambda)$ and an Abelian subgroup $A \leq \Lambda$. Let $\overline{A}^{{\rm Zar}}$ be the Zariski closure in $\PGL_d(\Rb)$. Then $\overline{A}^{{\rm Zar}}$ is Abelian and has finitely many components. Since $A \leq \overline{A}^{{\rm Zar}}$ is discrete, Lemma~\ref{lem:finite_generation} implies that 
\begin{align*}
A = \ip{a_1,\dots, a_m}
\end{align*}
for some $a_1, \dots,a_m \in A$. In particular,
\begin{align*}
C_\Lambda(A) = \cap_{j=1}^m C_\Lambda(a_j).
\end{align*}

Next for $r > 0$ define 
\begin{align*}
M_{r} := \left\{ x \in \Cc : H_\Omega(x,a_jx) \leq r \text{ for all } 1\leq j \leq m\right\}.
\end{align*}

\begin{lemma} $C_{\Lambda}(A) \leq \Stab_{\Lambda}(M_r)$. \end{lemma}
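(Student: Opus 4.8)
The plan is to verify directly that each element of $C_\Lambda(A)$ maps $M_r$ onto itself, using only the $\Aut(\Omega)$-invariance of the Hilbert metric (Proposition~\ref{prop:hilbert_basic}) together with the defining commutation relations. Fix $g \in C_\Lambda(A)$. Since $g \in \Lambda$, to conclude $g \in \Stab_\Lambda(M_r)$ it suffices to prove $g M_r = M_r$; and because $g^{-1} \in C_\Lambda(A)$ as well, it is enough to establish the single inclusion $g M_r \subseteq M_r$ (applying the same inclusion to $g^{-1}$ then yields $M_r \subseteq g M_r$).

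So I would take an arbitrary $x \in M_r$ and check that $gx \in M_r$. First, since $(\Omega, \Cc, \Lambda)$ is a naive convex co-compact triple and $g \in \Lambda$, we have $g\Cc = \Cc$, so $gx \in \Cc$. Next I would use that $C_\Lambda(A) = \bigcap_{j=1}^m C_\Lambda(a_j)$, so $g$ commutes with each generator $a_j$; combined with the $\Aut(\Omega)$-invariance of $H_\Omega$ and the fact that $g \in \Aut(\Omega)$, this gives, for every $1 \leq j \leq m$,
\begin{align*}
H_\Omega(gx, a_j gx) = H_\Omega(gx, g a_j x) = H_\Omega(x, a_j x) \leq r,
\end{align*}
where the last inequality holds because $x \in M_r$. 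Hence $gx \in M_r$, which is exactly the inclusion $g M_r \subseteq M_r$ we wanted.

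This argument is essentially immediate, so there is no genuine obstacle to overcome; the only points requiring attention are the two structural facts that are supplied by the setup, namely that $g$ preserves $\Cc$ (from the definition of a naive convex co-compact triple) and that $g$ commutes with each $a_j$ (from the decomposition $C_\Lambda(A) = \bigcap_j C_\Lambda(a_j)$ recorded just before the lemma). Everything else is the invariance of $H_\Omega$ under automorphisms, and the whole proof is a one-line computation together with the symmetry trick of applying the inclusion to $g^{-1}$.
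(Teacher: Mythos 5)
Your proof is correct and follows essentially the same route as the paper's: the same one-line computation $H_\Omega(gx, a_j gx) = H_\Omega(gx, g a_j x) = H_\Omega(x, a_j x) \leq r$ using commutation with the generators $a_j$ and $\Aut(\Omega)$-invariance of $H_\Omega$, followed by applying the inclusion to $g^{-1}$ to get equality. Your explicit check that $gx \in \Cc$ via $\Lambda$-invariance of $\Cc$ is a detail the paper leaves implicit, but otherwise the arguments are identical.
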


\begin{proof} If $\gamma \in C_{\Lambda}(A)$ and $x \in M_{r}$, then 
\begin{align*}
H_\Omega(\gamma x, a_j \gamma x) = H_\Omega(\gamma x, \gamma a_j x) = H_\Omega(x,a_j x) \leq r
\end{align*}
Hence $\gamma x \in M_{r}$. So $\gamma M_r \subset M_r$. Applying the same argument to $\gamma^{-1}$ shows that $M_r \subset \gamma M_r$. 
\end{proof}

\begin{lemma}\label{lem:centralizer_one} For every $r > 0$, $C_{\Lambda}(A)$ acts co-compactly on $M_r$. \end{lemma}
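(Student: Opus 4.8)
The plan is to exhibit a compact set $K' \subset M_r$ with $C_\Lambda(A) \cdot K' = M_r$; this immediately gives co-compactness, since the quotient $C_\Lambda(A) \backslash M_r$ is then the continuous image of the compact set $K'$. Since $\Lambda$ acts co-compactly on $\Cc$, I would first fix a compact set $K \subset \Cc$ with $\Lambda \cdot K = \Cc$. Because $M_r \subset \Cc$, every $x \in M_r$ can be written as $x = \gamma k$ with $\gamma \in \Lambda$ and $k \in K$.

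The crux is that the conjugates $\gamma^{-1} a_j \gamma$ are forced into a finite set. Indeed, if $x = \gamma k \in M_r$, then for each $j$
\begin{align*}
H_\Omega\big( (\gamma^{-1} a_j \gamma) k, \, k\big) = H_\Omega(a_j \gamma k, \gamma k) = H_\Omega(a_j x, x) \leq r,
\end{align*}
so $(\gamma^{-1} a_j \gamma) k$ lies in the set $K_r := \{ y \in \Omega : H_\Omega(y, K) \leq r\}$, which is compact since $H_\Omega$ is proper and $K$ is compact. Hence $(\gamma^{-1} a_j \gamma) K \cap K_r \neq \emptyset$. Because $\Lambda$ is discrete in $\Aut(\Omega)$ and therefore acts properly discontinuously on $(\Omega, H_\Omega)$, the set
\begin{align*}
E := \left\{ g \in \Lambda : g K \cap K_r \neq \emptyset \right\}
\end{align*}
is finite. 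Thus every $\gamma$ arising above lies in $\Gamma_0 := \{ \gamma \in \Lambda : \gamma^{-1} a_j \gamma \in E \text{ for all } 1 \leq j \leq m\}$.

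Next I would exploit that the ``conjugation data'' map $\Phi \colon \Gamma_0 \to E^m$, $\gamma \mapsto (\gamma^{-1} a_1 \gamma, \dots, \gamma^{-1} a_m \gamma)$, has finite image. If $\gamma, \gamma' \in \Gamma_0$ satisfy $\Phi(\gamma) = \Phi(\gamma')$, then $\gamma^{-1} a_j \gamma = (\gamma')^{-1} a_j \gamma'$ for each $j$, which rearranges to $(\gamma'\gamma^{-1}) a_j (\gamma'\gamma^{-1})^{-1} = a_j$; hence $\gamma'\gamma^{-1} \in \bigcap_{j} C_\Lambda(a_j) = C_\Lambda(A)$, so each fibre of $\Phi$ lies in a single right coset $C_\Lambda(A)\gamma$. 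Therefore $\Gamma_0$ is contained in finitely many right cosets, say $\Gamma_0 \subset \bigcup_{i=1}^N C_\Lambda(A)\delta_i$. For $x = \gamma k \in M_r$ with $\gamma \in \Gamma_0$, writing $\gamma = c\delta_i$ with $c \in C_\Lambda(A)$ gives $c^{-1} x = \delta_i k \in \bigcup_i \delta_i K$. Since the preceding lemma shows $C_\Lambda(A) \leq \Stab_\Lambda(M_r)$, the point $c^{-1} x$ again lies in $M_r$, so
\begin{align*}
K' := M_r \cap \bigcup_{i=1}^N \delta_i K
\end{align*}
is a compact subset of $M_r$ with $C_\Lambda(A) \cdot K' = M_r$, as desired.

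The main obstacle is the finiteness input: one needs that $\Lambda$ acts properly discontinuously on $(\Omega, H_\Omega)$, so that $E$ is finite. This is standard for discrete subgroups of $\Aut(\Omega)$ acting on a properly convex domain equipped with the proper, $\Aut(\Omega)$-invariant Hilbert metric (Proposition~\ref{prop:hilbert_basic}), and I would cite it accordingly. The more delicate point is the coset-counting step, where the finite generation of $A$ established above is essential: it makes $C_\Lambda(A)$ a \emph{finite} intersection of centralizers and reduces the conjugation data to the finite target $E^m$, which is exactly what bounds the number of cosets of $C_\Lambda(A)$ meeting $\Gamma_0$.
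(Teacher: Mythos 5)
Your proof is correct and rests on exactly the same mechanism as the paper's: co-compactness of $\Lambda$ on $\Cc$ moves points of $M_r$ back to a fixed compact set, properness of the $\Lambda$-action forces the conjugates $\gamma^{-1}a_j\gamma$ into a finite set, and equality of conjugation data places two elements of $\Lambda$ in the same right coset of $C_\Lambda(A) = \bigcap_{j=1}^m C_\Lambda(a_j)$ (which is where the finite generation of $A$ enters, just as in the paper). The only difference is presentational: the paper runs this as a proof by contradiction with sequences ($x_n$ escaping the orbit $C_\Lambda(A)\cdot x_0$, auxiliary elements $\beta_n$ from co-compactness, and a pigeonhole giving $\beta_n\beta_1^{-1}\in C_\Lambda(A)$), whereas you run the argument forwards to produce an explicit compact set $K'$ with $C_\Lambda(A)\cdot K' = M_r$.
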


The following argument comes from the proof of Theorem 3.2 in~\cite{R2001}.

\begin{proof} If $M_r= \emptyset$, then there is nothing to prove. So we may assume that $M_r \neq \emptyset$. 

Suppose for a contradiction that $C_{\Lambda}(A)$ does not act co-compactly on $M_r$. Fix some $x_0 \in M_r$. Then for each $n$ there exists some $x_n \in M_r$ such that 
\begin{align*}
H_\Omega\left(x_n, C_{\Lambda}(A) \cdot x_0 \right) \geq n.
\end{align*}
Since $\Lambda$ acts co-compactly on $\Cc$, there exist $M > 0$ and a sequence $\beta_n \in \Lambda$ such that 
\begin{align*}
H_\Omega(\beta_n x_0, x_n) \leq M.
\end{align*} 
for all $n \geq 0$. Then for $1 \leq j \leq m$
\begin{align*}
H_\Omega( \beta_n^{-1} a_j \beta_n x_0, x_0) 
&=H_\Omega( a_j \beta_n x_0, \beta_n x_0)\\
&  \leq H_\Omega(a_j \beta_n x_0, a_j x_n) +  H_\Omega(a_jx_n, x_n) + H_\Omega(x_n, \beta_n x_0) \\
& \leq M + r + M =  r+2M.
\end{align*}
Since $\Lambda$ acts properly on $\Omega$, for every $1 \leq j \leq m$ the set 
\begin{align*}
\{ \beta_n^{-1} a_j \beta_n : n \geq 0\}
\end{align*}
must be finite. So by passing to a subsequence we can assume that 
\begin{align*}
 \beta_n^{-1} a_j \beta_n  = \beta_1^{-1} a_j \beta_1
 \end{align*}
 for all $ 1\leq j \leq m$ and $n \geq 0$. Then $\beta_n \beta_1^{-1} \in \cap_{j=1}^m C_\Lambda(a_j)=C_\Lambda(A)$ for all $n \geq 0$. Then 
 \begin{align*}
n &\leq  H_\Omega\left(x_n, C_{\Lambda}(A) \cdot x_0 \right) \leq H_\Omega\left(x_n, \beta_n \beta_1^{-1} x_0 \right)\\
&  \leq H_\Omega\left(x_n, \beta_n x_0 \right) + H_\Omega\left(\beta_n x_0, \beta_n \beta_1^{-1} x_0 \right)\\
&  \leq M + H_\Omega\left(x_0, \beta_1^{-1} x_0 \right)
\end{align*}
for all $n \geq 0$, which is a contradiction. Hence $C_\Lambda(A)$ acts co-compactly on $M_r$. 
\end{proof}

\begin{lemma}\label{lem:centralizer_two} For any $r > 0$, 
\begin{align*}
{\rm ConvHull}_\Omega \left( M_r\right) \subset M_{2^{d-1}r}.
\end{align*}
\end{lemma}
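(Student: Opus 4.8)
The plan is to reduce the statement to a single two-point estimate coming from Crampon's inequality (Lemma~\ref{lem:Crampons_dist_est}), and then to bootstrap that estimate up to the full convex hull using Carath\'eodory's theorem together with an induction that doubles the displacement bound at each step.

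First I would establish the basic \emph{doubling step}: for every $R > 0$ and every pair $p, q \in M_R$, the segment $[p,q]$ is contained in $M_{2R}$. To see this, fix $1 \le j \le m$ and let $\sigma \colon [0,T] \to \Omega$ be the unit-speed parametrization of the geodesic with image $[p,q]$ (which exists by Proposition~\ref{prop:hilbert_basic}). Since $a_j \in \Aut(\Omega)$ acts as a Hilbert isometry, $a_j \circ \sigma$ is again a unit-speed projective line geodesic, now with image $[a_j p, a_j q]$. Applying Lemma~\ref{lem:Crampons_dist_est} to the pair $\sigma$ and $a_j \circ \sigma$ gives, for every $t \in [0,T]$,
\begin{align*}
H_\Omega(\sigma(t), a_j \sigma(t)) \le H_\Omega(p, a_j p) + H_\Omega(q, a_j q) \le 2R.
\end{align*}
As $j$ was arbitrary and $[p,q] \subset \Cc$ by convexity of $\Cc$, every point of $[p,q]$ lies in $M_{2R}$.

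Next I would pass from segments to the full convex hull. Let $x \in {\rm ConvHull}_\Omega(M_r)$, and note $x \in \Cc \subset \Omega$. Since $\Omega$ lies in an affine chart as a convex set of dimension at most $d-1$, Carath\'eodory's theorem lets me write $x$ as a convex combination of points $p_0, \dots, p_k \in M_r$ with $k \le d-1$. I would then prove by induction on $k$ that ${\rm ConvHull}_\Omega\{p_0,\dots,p_k\} \subset M_{2^k r}$. The base case $k = 0$ is immediate. For the inductive step, any point of ${\rm ConvHull}_\Omega\{p_0,\dots,p_k\}$ lies on a segment $[y, p_k]$ with $y \in {\rm ConvHull}_\Omega\{p_0,\dots,p_{k-1}\} \subset M_{2^{k-1}r}$ (by the inductive hypothesis, using that this smaller hull is contained in $\Cc$ by convexity); since also $p_k \in M_r \subset M_{2^{k-1}r}$, the doubling step applied with $R = 2^{k-1}r$ places the whole segment in $M_{2^k r}$. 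Taking $k \le d-1$ then gives $x \in M_{2^{d-1}r}$, which is the claim.

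The only genuinely delicate point is the doubling step, and there the care required is just to verify that Crampon's inequality is applicable, i.e. that both $\sigma$ and $a_j \circ \sigma$ are unit-speed geodesics whose images are line segments; this follows because $a_j$ acts as a projective isometry of $(\Omega, H_\Omega)$. The remainder is bookkeeping: the exponential constant $2^{d-1}$ is exactly what the crude (rather than additive) doubling produces over the at most $d-1$ inductive steps supplied by Carath\'eodory's bound.
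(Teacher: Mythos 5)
Your proof is correct and follows essentially the same route as the paper's: both arguments use Carath\'eodory's theorem to bound the number of points needed, then induct on that number, with Crampon's estimate (Lemma~\ref{lem:Crampons_dist_est}) applied to a segment and its image under each $a_j$ supplying the doubling $M_R \Rightarrow M_{2R}$ at every step. The only cosmetic difference is that you peel off one Carath\'eodory point $p_k$ at a time, whereas the paper inducts on the sets $C_n$ of convex combinations of $n$ points of $M_r$; both yield the same constant $2^{d-1}$.
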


\begin{remark} A similar estimate is established in~\cite[Lemma 8.4]{CLT2015}.\end{remark}

\begin{proof}
For $n \geq 0$, let $C_n \subset {\rm ConvHull}_\Omega \left( M_r \right)$ denote the elements which can be written as a convex combination of $n$ elements in $M_r$. Then $C_1 = M_r$ and by Carath\'eodory's convex hull theorem, $C_{d} =  {\rm ConvHull}_\Omega \left( M_r \right)$. We claim by induction that 
\begin{align*}
C_n \subset M_{2^{(n-1)}r}
\end{align*}
for every $1 \leq n \leq d$.

By definition $C_1 = M_r$ so the base case is true. Now suppose that 
\begin{align*}
C_n \subset M_{2^{(n-1)}r}
\end{align*}
and $p \in C_{n+1}$. Then there exists $p_1, p_2 \in C_n$ such that $p \in [p_1,p_2]$. Let $\sigma : [0,T] \rightarrow \Cc$ be the unit speed projective line geodesic with $\sigma(0)=p_1$ and $\sigma(T) = p_2$. Then $p=\sigma(t_0)$ for some $t_0 \in [0,T]$. Next for $1 \leq j \leq m$ let $\sigma_j = a_j \circ \sigma$. Then Lemma~\ref{lem:Crampons_dist_est} implies that
\begin{align*}
H_\Omega(p,a_j p) 
&= H_\Omega(\sigma(t_0), \sigma_j(t_0)) \leq H_\Omega(\sigma(0), \sigma_j(0))+H_\Omega(\sigma(T), \sigma_j(T)) \\
& = H_\Omega(p_1,a_jp_1) + H_\Omega(p_2, a_jp_2) \leq 2^{(n-1)}r + 2^{(n-1)}r = 2^n r
\end{align*}
Since $p \in C_{n+1}$ was arbitrary, we have
\begin{align*}
C_{n+1} \subset M_{2^{n}r}
\end{align*}
and the proof is complete.
\end{proof}

Combining Lemma~\ref{lem:centralizer_one} and Lemma~\ref{lem:centralizer_two} we have the following. 

\begin{lemma}\label{lem:co_cpct_on_convex_hull} For any $r > 0$, $C_\Lambda(A)$ acts co-compactly on ${\rm ConvHull}_\Omega \left( M_r \right)$. \end{lemma}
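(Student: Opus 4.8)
The plan is to combine Lemma~\ref{lem:centralizer_one} and Lemma~\ref{lem:centralizer_two} by a simple sandwiching argument, with no new estimates required. Fix $r > 0$ and set $G := C_\Lambda(A)$ and $X := {\rm ConvHull}_\Omega(M_r)$. First I would record the chain of nested, $G$-invariant sets
\[
M_r \subseteq X \subseteq M_{2^{d-1}r},
\]
where the left inclusion is immediate from the definition of the convex hull and the right inclusion is exactly Lemma~\ref{lem:centralizer_two}. Recall from just above that $G = C_\Lambda(A) \leq \Stab_\Lambda(M_r)$, so $g M_r = M_r$ for every $g \in G$; since each such $g$ lies in $\Aut(\Omega)$ and hence carries convex subsets of $\Omega$ to convex subsets of $\Omega$, we get $gX = g\,{\rm ConvHull}_\Omega(M_r) = {\rm ConvHull}_\Omega(gM_r) = {\rm ConvHull}_\Omega(M_r) = X$. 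Thus $X$ is $G$-invariant.

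The key step is to transfer co-compactness from the largest level set down to $X$. By Lemma~\ref{lem:centralizer_one} applied with the parameter $2^{d-1}r$, the group $G$ acts co-compactly on $M_{2^{d-1}r}$, so there is a compact set $K \subseteq M_{2^{d-1}r}$ with $G \cdot K = M_{2^{d-1}r}$. Given any $x \in X$, the inclusion $X \subseteq M_{2^{d-1}r}$ lets me write $x = gk$ with $g \in G$ and $k \in K$; because $X$ is $G$-invariant, $k = g^{-1}x \in X$, so in fact $k \in K \cap X$. Hence $G \cdot (K \cap X) = X$, which exhibits $K\cap X$ as a fundamental set for the $G$-action on $X$.

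The main technical point to watch — and the only real obstacle — is the compactness of the candidate fundamental set $K \cap X$. Since $K$ is compact and $K \cap X$ is cut out of $K$ by $X$, this is a genuine compact subset of $X$ precisely when $X$ is closed in $\Omega$, which need not hold for the (not-necessarily-closed) convex hull. The clean way around this is to run the argument with $\overline{X}$ in place of $X$: the closure $\overline{X}$ is convex, $G$-invariant, and still sandwiched between $M_r$ and the \emph{closed} set $M_{2^{d-1}r}$, so $K \cap \overline{X}$ is compact and the orbit argument above yields co-compactness of the $G$-action on $\overline{X}$. Alternatively, one observes that $\overline{K \cap X}$ is a compact subset of $\overline{X}$ with $G \cdot \overline{K \cap X} \supseteq X$, which already furnishes the coboundedness needed in the subsequent application. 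Either way, the entire content of the lemma is the inclusion supplied by Lemma~\ref{lem:centralizer_two} together with the co-compact action of Lemma~\ref{lem:centralizer_one} on the larger set.
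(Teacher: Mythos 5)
Your core argument is the paper's own: the paper proves this lemma exactly by the sandwich $M_r \subseteq {\rm ConvHull}_\Omega(M_r) \subseteq M_{2^{d-1}r}$ from Lemma~\ref{lem:centralizer_two}, co-compactness of $C_\Lambda(A)$ on $M_{2^{d-1}r}$ from Lemma~\ref{lem:centralizer_one}, and the observation that the action restricts co-compactly to a closed invariant subset. Your invariance check and orbit-transfer step are, in content, identical to what the paper does.

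The one substantive difference is the point you flag: the paper simply \emph{asserts} that ${\rm ConvHull}_\Omega(M_r)$ is a closed $C_\Lambda(A)$-invariant subset of $M_{2^{d-1}r}$ and concludes, with no justification of closedness. Your caution here is warranted: the convex hull of a closed unbounded subset of $\Omega$ need not be closed in $\Omega$ (in the unit disk, the hull of an open diameter together with one off-axis point already fails to be closed), and the issue cannot be sidestepped, because $\Lambda$ is discrete and acts properly on $\Omega$, so \emph{any} invariant subset on which $C_\Lambda(A)$ acts co-compactly is automatically closed (pull a convergent sequence back into the compact fundamental set; properness forces the translating elements to lie in a finite set). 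Consequently the lemma as literally stated is true if and only if ${\rm ConvHull}_\Omega(M_r)$ is closed in $\Omega$, and your proof---which establishes co-compactness only on the closure---proves a formally weaker statement; but by the same token the paper's proof is incomplete without that closedness claim, so you have isolated a gap in the write-up rather than introduced one of your own. Your closing remark is also accurate: the closure version suffices for every downstream use (the action on ${\rm ConvHull}_\Omega(\Min_{\Cc}(A))$, and both applications of Theorem~\ref{thm:abelian_cc_actions}, whose hypothesis demands a closed convex set in any case), so replacing convex hulls by closed convex hulls throughout Section~\ref{sec:minimal_sets} repairs the argument consistently. To obtain the lemma exactly as stated, you---and the paper---would still need a separate argument that ${\rm ConvHull}_\Omega(M_r)$ is closed, for instance by exploiting the co-compact $C_\Lambda(A)$-action on $M_r$ itself; neither proof supplies this.
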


\begin{proof} Lemma~\ref{lem:centralizer_one} implies that $C_\Lambda(A)$ acts co-compactly on $M_{2^{d-1}r}$ and Lemma~\ref{lem:centralizer_two} implies that  ${\rm ConvHull}_\Omega \left( M_r \right)$ is a subset of $M_{2^{d-1}r}$. Then, since ${\rm ConvHull}_\Omega \left( M_r \right)$ is a closed $C_\Lambda(A)$-invariant subset of $M_{2^{d-1}r}$, the action of $C_\Lambda(A)$ on ${\rm ConvHull}_\Omega \left( M_r \right)$ is co-compact.
\end{proof}

\begin{lemma} $\Min_{\Cc}(A) \neq \emptyset$ and $C_\Lambda(A)$ acts co-compactly on ${ \rm ConvHull}_\Omega( \Min_{\Cc}(A))$.  \end{lemma}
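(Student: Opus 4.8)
The plan is to deduce the final lemma from the work already done, the key being to relate the set $M_r$ (defined via the generators $a_1,\dots,a_m$) to the genuine minimal translation set $\Min_{\Cc}(A)$. First I would observe that $M_r$ is increasing in $r$, and that $\Min_{\Cc}(A) = \bigcap_{r > 0} M_r$ is \emph{not} quite true because $M_r$ only constrains the generators; however, I would show that a point lies in $\Min_{\Cc}(A)$ exactly when it simultaneously minimizes each $H_\Omega(x, a_j x)$. The cleanest route is to consider the continuous function $f(x) = \max_{1 \leq j \leq m} \big( H_\Omega(x, a_j x) - \tau_\Omega(a_j) \big)$ on $\Cc$, which is $C_\Lambda(A)$-invariant and satisfies $f \geq 0$, with $f(x) = 0$ precisely when $x \in \Min(a_j)$ for all $j$. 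Since $A = \langle a_1,\dots,a_m\rangle$ and the Hilbert displacement satisfies the triangle inequality $H_\Omega(x, a a' x) \leq H_\Omega(x, a x) + H_\Omega(x, a' x)$, a point minimizing all the generators in fact lies in $\Min(a)$ for every $a \in A$; combined with Proposition~\ref{prop:min_set_inv_simplex} this identifies the zero set of $f$ with $\Min_{\Cc}(A)$.

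Next I would prove $\Min_{\Cc}(A) \neq \emptyset$. Let $r_0 := \inf_{x \in \Cc} f(x) + \max_j \tau_\Omega(a_j)$, or more directly set $r_* = \inf\{ r > 0 : M_r \neq \emptyset\}$ and argue that the infimum is attained. By Lemma~\ref{lem:centralizer_one}, $C_\Lambda(A)$ acts co-compactly on each nonempty $M_r$, so choosing a minimizing sequence $x_n$ with $f(x_n) \to \inf f$ and translating by $C_\Lambda(A)$ (which preserves $f$) into a fixed compact fundamental domain, I extract a convergent subsequence whose limit $x_\infty$ achieves the infimum of $f$ on $\Cc$. It remains to check that this infimum value is zero, i.e. that $x_\infty \in \Min(a_j)$ for each $j$: this follows because each $\tau_\Omega(a_j) = \inf_{x \in \Omega} H_\Omega(x, a_j x)$ is realized along the invariant simplex produced by Theorem~\ref{thm:abelian_cc_actions} applied to $A$ acting on $\Cc$, and Proposition~\ref{prop:min_set_inv_simplex} shows that simplex lies in $\Min(a_j) \cap \Cc$ for every $j$, so $\inf_{x \in \Cc} f(x) = 0$ and $x_\infty \in \Min_{\Cc}(A)$.

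Finally, for the co-compactness statement I would show $\Min_{\Cc}(A) = M_{r_*}$ for the critical value $r_* = \max_j \tau_\Omega(a_j)$, so that the convex hull $\mathrm{ConvHull}_\Omega(\Min_{\Cc}(A))$ equals $\mathrm{ConvHull}_\Omega(M_{r_*})$. Then Lemma~\ref{lem:co_cpct_on_convex_hull} directly gives that $C_\Lambda(A)$ acts co-compactly on this convex hull. The one subtlety is that $\Min_{\Cc}(A)$ is cut out by the $\tau_\Omega(a_j)$ being exactly attained rather than by a strict bound $r$; but since each $H_\Omega(x,a_j x) \geq \tau_\Omega(a_j)$ everywhere and the $a_j$ generate $A$, the set where all generators attain their minima coincides with $M_{r_*}$ once one notes that $x \in M_{r_*}$ forces $H_\Omega(x, a_j x) = \tau_\Omega(a_j)$ for every $j$ (as $r_* = \max_j \tau_\Omega(a_j)$ cannot be improved).

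The main obstacle I anticipate is the nonpositive-curvature-free argument that the displacement functions attain their infimum simultaneously on $\Cc$: in the $\CAT(0)$ setting the convexity of displacement functions does this for free, but here the Hilbert metric is not $\CAT(0)$, so I must lean on Theorem~\ref{thm:abelian_cc_actions} to locate an explicit common minimal set (the invariant simplex, on which each $a_j$ translates by exactly $\tau_\Omega(a_j)$) and on Lemma~\ref{lem:Crampons_dist_est} to control displacement along geodesics. Getting the bookkeeping right — that the \emph{generators'} minimal set equals the \emph{whole group's} minimal set, and that this is exactly the critical-level set $M_{r_*}$ to which Lemma~\ref{lem:co_cpct_on_convex_hull} applies — is where the care is needed.
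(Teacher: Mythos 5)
There is a genuine gap at the heart of your non-emptiness argument. You invoke Theorem~\ref{thm:abelian_cc_actions} for ``$A$ acting on $\Cc$'' to produce an invariant simplex on which each $a_j$ realizes $\tau_\Omega(a_j)$, and this is what your claim $\inf_{\Cc} f = 0$ rests on. But Theorem~\ref{thm:abelian_cc_actions} requires the Abelian group to act \emph{co-compactly} on the convex set in question, and $A$ need not act co-compactly on $\Cc$: only $\Lambda$ does, and $A$ may have infinite index in $\Lambda$ (this is the typical case). You also cannot substitute ${\rm ConvHull}_\Omega(M_r)$ for $\Cc$: Lemma~\ref{lem:co_cpct_on_convex_hull} gives co-compactness of $C_\Lambda(A)$ there, but $C_\Lambda(A)$ need not be Abelian, and $A$ itself may have infinite index in $C_\Lambda(A)$, so neither group satisfies both hypotheses of Theorem~\ref{thm:abelian_cc_actions}. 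This is exactly the difficulty the paper's proof is engineered to avoid: it first enlarges $A$ to a \emph{maximal} Abelian subgroup $A' \geq A$, so that $A' = C_\Lambda(A')$ is simultaneously Abelian and (by Lemma~\ref{lem:co_cpct_on_convex_hull} applied to $A'$, with its own generators and sets $M'_r$) co-compact on the convex set ${\rm ConvHull}_\Omega(M'_r)$. Theorem~\ref{thm:abelian_cc_actions} then applies to $A'$ on that set, Proposition~\ref{prop:min_set_inv_simplex} places the resulting simplex inside $\Min_{\Cc}(A')$, and the inclusion $\Min_{\Cc}(A') \subset \Min_{\Cc}(A)$ (valid since $A \leq A'$) finishes. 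Without this enlargement step, your argument that the infimum of $f$ is attained and equals zero has no support.

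Two secondary points. First, your claimed identity $\Min_{\Cc}(A) = M_{r_*}$ with $r_* = \max_j \tau_\Omega(a_j)$ is false whenever the translation lengths of the generators are not all equal: a point $x$ with $\tau_\Omega(a_1) < H_\Omega(x,a_1x) \leq r_*$ lies in $M_{r_*}$ but not in $\Min(a_1)$. This is easily repaired, and the paper does so: one only needs $\Min_{\Cc}(A) \subset M_r$, after which ${\rm ConvHull}_\Omega(\Min_{\Cc}(A))$ is a closed $C_\Lambda(A)$-invariant subset of ${\rm ConvHull}_\Omega(M_r)$, and a co-compact action restricts to a co-compact action on any closed invariant subset. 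Second, your triangle-inequality argument that a common minimizer of the generators lies in $\Min(a)$ for every $a \in A$ does not work: it only yields $H_\Omega(x, aa'x) \leq \tau_\Omega(a) + \tau_\Omega(a')$, and there is no reason this equals $\tau_\Omega(aa')$, which can be strictly smaller. That identification is also unnecessary: once one has a simplex all of whose vertices are fixed by the whole group, Proposition~\ref{prop:min_set_inv_simplex} gives containment in $\Min(a)$ for \emph{every} element $a$ at once, not just for the generators.
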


\begin{proof} If $r > \max_{1 \leq j \leq d} \tau(a_j)$, then 
\begin{align*}
\Min_{\Cc}(A) = \cap_{a \in A} \Min_{\Cc}(a) \subset \cap_{j=1}^m \Min_{\Cc}(a_j) \subset M_r.
\end{align*}
So ${ \rm ConvHull}_\Omega( \Min_{\Cc}(A))$ is a closed $C_\Lambda(A)$-invariant subset of ${\rm ConvHull}_\Omega \left( M_r \right)$. 
Further, Lemma~\ref{lem:co_cpct_on_convex_hull} implies that $C_\Lambda(A)$ acts co-compactly on ${\rm ConvHull}_\Omega \left( M_r \right)$. So $C_\Lambda(A)$ also acts co-compactly on ${ \rm ConvHull}_\Omega( \Min_{\Cc}(A))$.   

Next we show that $\Min_{\Cc}(A) \neq \emptyset$. Pick $A^\prime \geq A$ a maximal Abelian subgroup in $\Lambda$. Then $A^\prime = C_{\Lambda}(A^\prime)$. By Lemma~\ref{lem:finite_generation} and the discussion following the lemma
\begin{align*}
A^\prime = \ip{a_1^\prime,\dots, a_n^\prime}
\end{align*}
for some $a_1^\prime, \dots, a_n^\prime \in A^\prime$. Notice that 
\begin{align*}
\Min_{\Cc}(A^\prime) = \cap_{a \in A^\prime} \Min_{\Cc}(a) \subset \cap_{a \in A} \Min_{\Cc}(a) = \Min_{\Cc}(A)
\end{align*}
and so it is enough to show that $\Min_{\Cc}(A^\prime) \neq \emptyset$. 

For $r > 0$ define 
\begin{align*}
M_{r}^\prime := \left\{ x \in \Cc : H_\Omega(x,a_j^\prime x) \leq r \text{ for all } 1\leq j \leq n\right\}.
\end{align*}
Then for $r$ sufficiently large, $M_r^\prime \neq \emptyset$. Further, by applying Lemma~\ref{lem:co_cpct_on_convex_hull} to $A^\prime$, we see that $A^\prime$ acts co-compactly on the convex set 
\begin{align*}
\Cc^\prime := { \rm ConvHull}_\Omega(M_r^\prime) \subset \Cc.
\end{align*}

Then by Theorem~\ref{thm:abelian_cc_actions} there exists a properly embedded simplex $S \subset \Cc^\prime \subset \Cc$ where 
\begin{enumerate}
\item $A^\prime \leq \Stab_\Omega(S)$, 
\item $A^\prime$ acts co-compactly on $S$, and 
\item $A^\prime$ fixes each vertex of $S$. 
\end{enumerate}
Then Proposition~\ref{prop:min_set_inv_simplex} implies that 
\begin{align*}
S \subset \Min_{\Cc}(A^\prime)
\end{align*}
and hence $\Min_{\Cc}(A^\prime)$ is non-empty. 
\end{proof}

\section{Proof of Theorem~\ref{thm:max_abelian}}\label{sec:pf_of_thm_max_abelian}

Theorem~\ref{thm:max_abelian} is a straightforward consequence of Theorems~\ref{thm:abelian_cc_actions} and~\ref{thm:centralizers_act_cocpctly}. Suppose that $(\Omega, \Cc, \Lambda)$ is a naive convex co-compact triple and $A \leq \Lambda$ is a maximal Abelian subgroup. Since $A$ is a maximal Abelian subgroup, $A = C_{\Lambda}(A)$. Then Theorem~\ref{thm:centralizers_act_cocpctly} implies that $A$ acts co-compactly on the non-empty convex subset 
\begin{align*}
{\rm ConvHull}_\Omega\left(\Min_{\Cc}(A)\right) \subset \Cc.
\end{align*}

Then by Theorem~\ref{thm:abelian_cc_actions} there exists a properly embedded simplex 
\begin{align*}
S \subset {\rm ConvHull}_\Omega\left(\Min_{\Cc}(A)\right) \subset \Cc
\end{align*}
where 
\begin{enumerate}
\item $A \leq \Stab_\Omega(S)$, 
\item $A$ acts co-compactly on $S$, and 
\item $A$ fixes each vertex of $S$. 
\end{enumerate}

It remains to show that $A$ has a finite index subgroup isomorphic to $\Zb^k$ where $k = \dim S$. Consider $V:=\Spanset S$ and the homomorphism 
\begin{align*}
\varphi & : A \rightarrow \Aut(S) \leq \PGL(V) \\
\varphi& (a) = a|_V.
\end{align*}
By changing coordinates we can assume that $V = \Rb^{k+1} \times \{0\}$ and 
\begin{align*} 
S = \{ [x_1 : \dots : x_{k+1} : 0 : \dots : 0] : x_1,\dots, x_{k+1} > 0\}.
\end{align*}
Since $A$ fixes the vertices of $S$, $\varphi(A)$ is a subgroup of 
\begin{align*}
G&:=\Big\{ \left[{\rm diag} \big( a_1, \dots, a_{k+1} \big)\right] \in \PGL_{k+1}(\Rb) : a_1, \dots, a_{k+1}>0 \Big\} \cong (\Rb^{k},+).
\end{align*}

Notice that $\ker(\varphi)$ fixes every point of $S$ and hence, since $A$ acts properly discontinuously on $\Omega$, must be a finite group. By Selberg's lemma, there exists a torsion-free finite index subgroup $\Lambda_0 \leq \Lambda$. Then $\Lambda_0 \cap A \leq A$ has finite index. Further $(\Lambda_0 \cap A) \cap \ker \varphi = \{\id\}$ and so $\varphi|_{\Lambda_0 \cap A}$ is injective. 

Finally, 
\begin{align*}
\Lambda_0 \cap A \cong \varphi(\Lambda_0 \cap A) \leq G\cong (\Rb^{k},+)
\end{align*} 
is a uniform lattice since $\Lambda_0 \cap A$ acts co-compactly and properly discontinuously on $S$. So $\Lambda_0 \cap A  \cong \Zb^k$.

\bibliographystyle{alpha}
\bibliography{geom}

\begin{thebibliography}{{Zim}17}

\bibitem[BDL18]{BDL2018}
Samuel~A. Ballas, Jeffrey Danciger, and Gye-Seon Lee.
\newblock Convex projective structures on nonhyperbolic three-manifolds.
\newblock {\em Geom. Topol.}, 22(3):1593--1646, 2018.

\bibitem[Ben06]{B2006}
Yves Benoist.
\newblock Convexes divisibles. {IV}. {S}tructure du bord en dimension 3.
\newblock {\em Invent. Math.}, 164(2):249--278, 2006.

\bibitem[Ben08]{B2008}
Yves Benoist.
\newblock A survey on divisible convex sets.
\newblock In {\em Geometry, analysis and topology of discrete groups}, volume~6
  of {\em Adv. Lect. Math. (ALM)}, pages 1--18. Int. Press, Somerville, MA,
  2008.

\bibitem[BK53]{BK1953}
Herbert Busemann and Paul~J. Kelly.
\newblock {\em Projective geometry and projective metrics}.
\newblock Academic Press Inc., New York, N. Y., 1953.

\bibitem[Bus55]{HB1955}
Herbert Busemann.
\newblock {\em The geometry of geodesics}.
\newblock Academic Press Inc., New York, N. Y., 1955.

\bibitem[CLM20]{CLM2020}
Suhyoung Choi, Gye-Seon Lee, and Ludovic Marquis.
\newblock Convex projective generalized {D}ehn filling.
\newblock {\em Ann. Sci. \'Ec. Norm. Sup\'er.}, 53(1):217--266, 2020.

\bibitem[CLT15]{CLT2015}
D.~Cooper, D.D. Long, and S.~Tillmann.
\newblock On convex projective manifolds and cusps.
\newblock {\em Advances in Mathematics}, 277:181 -- 251, 2015.

\bibitem[Cra09]{C2009}
Micka\"{e}l Crampon.
\newblock Entropies of strictly convex projective manifolds.
\newblock {\em J. Mod. Dyn.}, 3(4):511--547, 2009.

\bibitem[DGK17]{DGF2017}
Jeffrey {Danciger}, Fran{\c{c}}ois {Gu{\'e}ritaud}, and Fanny {Kassel}.
\newblock {Convex cocompact actions in real projective geometry}.
\newblock {\em arXiv e-prints}, page arXiv:1704.08711, Apr 2017.

\bibitem[DGK18]{DGF2018}
Jeffrey Danciger, Fran\c{c}ois Gu\'{e}ritaud, and Fanny Kassel.
\newblock Convex cocompactness in pseudo-{R}iemannian hyperbolic spaces.
\newblock {\em Geom. Dedicata}, 192:87--126, 2018.

\bibitem[dlH93]{dlH1993}
Pierre de~la Harpe.
\newblock On {H}ilbert's metric for simplices.
\newblock In {\em Geometric group theory, {V}ol. 1 ({S}ussex, 1991)}, volume
  181 of {\em London Math. Soc. Lecture Note Ser.}, pages 97--119. Cambridge
  Univ. Press, Cambridge, 1993.

\bibitem[Fra89]{Fra1989}
Sidney Frankel.
\newblock Complex geometry of convex domains that cover varieties.
\newblock {\em Acta Math.}, 163(1-2):109--149, 1989.

\bibitem[GW71]{GW1971}
Detlef Gromoll and Joseph~A. Wolf.
\newblock Some relations between the metric structure and the algebraic
  structure of the fundamental group in manifolds of nonpositive curvature.
\newblock {\em Bull. Amer. Math. Soc.}, 77:545--552, 1971.

\bibitem[KL06]{KL2006}
Bruce Kleiner and Bernhard Leeb.
\newblock Rigidity of invariant convex sets in symmetric spaces.
\newblock {\em Invent. Math.}, 163(3):657--676, 2006.

\bibitem[KS58]{KS1958}
Paul Kelly and Ernst Straus.
\newblock Curvature in {H}ilbert geometries.
\newblock {\em Pacific J. Math.}, 8:119--125, 1958.

\bibitem[LY72]{LY1972}
H.~Blaine Lawson, Jr. and Shing~Tung Yau.
\newblock Compact manifolds of nonpositive curvature.
\newblock {\em J. Differential Geometry}, 7:211--228, 1972.

\bibitem[Mar14]{L2014}
Ludovic Marquis.
\newblock Around groups in {H}ilbert geometry.
\newblock In {\em Handbook of {H}ilbert geometry}, volume~22 of {\em IRMA Lect.
  Math. Theor. Phys.}, pages 207--261. Eur. Math. Soc., Z\"{u}rich, 2014.

\bibitem[Nus88]{N1988}
Roger~D. Nussbaum.
\newblock Hilbert's projective metric and iterated nonlinear maps.
\newblock {\em Mem. Amer. Math. Soc.}, 75(391):iv+137, 1988.

\bibitem[Qui05]{Q2005}
J.-F. Quint.
\newblock Groupes convexes cocompacts en rang sup\'erieur.
\newblock {\em Geom. Dedicata}, 113:1--19, 2005.

\bibitem[Qui10]{Q2010}
Jean-Fran\c{c}ois Quint.
\newblock Convexes divisibles (d'apr\`es {Y}ves {B}enoist).
\newblock {\em Ast\'{e}risque}, (332):Exp. No. 999, vii, 45--73, 2010.
\newblock S\'{e}minaire Bourbaki. Volume 2008/2009. Expos\'{e}s 997--1011.

\bibitem[Rag72]{R1972}
M.~S. Raghunathan.
\newblock {\em Discrete subgroups of {L}ie groups}.
\newblock Springer-Verlag, New York-Heidelberg, 1972.
\newblock Ergebnisse der Mathematik und ihrer Grenzgebiete, Band 68.

\bibitem[Rua01]{R2001}
Kim~E. Ruane.
\newblock Dynamics of the action of a {${\rm CAT}(0)$} group on the boundary.
\newblock {\em Geom. Dedicata}, 84(1-3):81--99, 2001.

\bibitem[Ver14]{V2014}
Constantin Vernicos.
\newblock On the {H}ilbert geometry of convex polytopes.
\newblock In {\em Handbook of {H}ilbert geometry}, volume~22 of {\em IRMA Lect.
  Math. Theor. Phys.}, pages 111--125. Eur. Math. Soc., Z\"{u}rich, 2014.

\bibitem[{Zim}17]{Z2017}
Andrew {Zimmer}.
\newblock {Projective {A}nosov representations, convex cocompact actions, and
  rigidity}.
\newblock {\em arXiv e-prints}, page arXiv:1704.08582, Apr 2017.

\end{thebibliography}

\end{document}